\theoremstyle{plain}
\newtheorem{theorem}{Theorem}[section]
\newtheorem{proposition}[theorem]{Proposition}
\newtheorem{corollary}[theorem]{Corollary}
\newtheorem{lemma}[theorem]{Lemma}
\theoremstyle{definition}
\newtheorem{remark}[theorem]{Remark}
\newtheorem{definition}[theorem]{Definition}
\newtheorem{example}[theorem]{Example}
\newtheorem{assumption}[theorem]{Assumption}
\newcommand{\C}{\mathbb{C}}
\newcommand{\R}{\mathbb{R}}
\newcommand{\Z}{\mathbb{Z}}
\newcommand{\N}{\mathbb{N}}
\newcommand{\CP}{\mathbb{C}\mathrm{P}}
\newcommand{\id}{\mathop{\mathrm{id}}\nolimits}
\newcommand{\CO}{\mathcal O}
\renewcommand{\tilde}{\widetilde}
\renewcommand{\setminus}{\smallsetminus}
\newcommand{\nin}{/\kern-2.1ex\in}
\newcommand{\abs}[1]{\lvert#1\rvert}
\newcommand{\norm}[1]{\lVert#1\rVert}
\def\pr{\operatorname{pr}}
\def\<{\left\langle}
\def\>{\right\rangle}
\def\End{\operatorname{End}}
\def\ind{\operatorname{ind}}
\def\Aut{\operatorname{Aut}}
\def\supp{\operatorname{supp}}
\numberwithin{equation}{section}
\title[Torus fibrations and localization of index II]{Torus fibrations and localization of index II \\ 
- Local index for acyclic compatible system - }
\author[H. Fujita]{Hajime Fujita}
\author[M. Furuta]{Mikio Furuta}
 \author[T. Yoshida]{Takahiko Yoshida}
\subjclass[2000]{} 
\address{Department of Mathematical and Physical Sciences, Japan Womens's University, 
2-8-1 Mejirodai, Bunkyo-ku, Tokyo, 112-8681, Japan}
\email{fujitah@fc.jwu.ac.jp}
\address{Graduate School of Mathematical Sciences, The University of Tokyo, 
3-8-1 Komaba, Meguro-ku, Tokyo, 153-8914, Japan}
\email{furuta@ms.u-tokyo.ac.jp}
\address{Department of Mathematics, Graduate School of Science and Technology, Meiji University, 1-1-1 Higashimita, Tama-ku, Kawasaki, 214-8571, Japan}
\email{takahiko@meiji.ac.jp}
\begin{document}

\maketitle

\begin{abstract}
We give a framework of localization for 
the index of a Dirac-type operator 
on an open manifold. 
Suppose the open manifold has a 
compact subset whose complement is covered by a family of finitely many open subsets, each of which
has a structure of the total space of a torus bundle.
Under an acyclic condition 
we define the index of the Dirac-type operator by using the Witten-type deformation, and show that the index has several properties, such as excision property and a product formula. In particular, we show that the index is localized on the compact set.
\end{abstract}


\section{Introduction}

This paper is the second of the series concerning a localization of the 
index of elliptic operators.

For a linear elliptic operator on a closed manifold,
its Fredholm index is sometimes determined by the information on a specific subset
under appropriate geometric conditions.
Such a phenomenon is called {\it localization of index}.
A typical example is Hopf's theorem identifying 
the index of the de Rham operator with the number of zeros of a vector field
counted with sign and multiplicity.
In this case the geometric condition is given by the vector field,
and the index is localized around the zeros of the vector field.
Another typical example is Atiyah-Segal's Lefschetz formula
for the equivariant index for a torus action,
when the geometric condition is given by the torus action.
The index is localized around its fixed point set, and
the localization is understood in terms of an algebraic localization of
equivariant K-group.
In particular when the manifold is symplectic and the elliptic operator 
is a Dirac-type operator, the localization is extensively investigated
using the relation between the algebraic localizations in 
equivariant K-theory and that in equivariant ordinary cohomology theory.

In the previous paper \cite{Fujita-Furuta-Yoshida},
we investigated the case of a closed symplectic manifold equipped with 
a prequantizing line bundle and the structure of a Lagrangian fibration,
and described a localization of the index of a Dirac-type operator, twisted by the
prequantizing line bundle,
on the subset consisting of Bohr-Sommerfeld fibers and
singular fibers.
A novel feature of our method is
that we do not use a global group action but use only the structure of a 
torus bundle on an open subset of the manifold.

In the present paper we generalize our method
to deal with the case when we do not have a global torus bundle
on the open subset, but we just have the structure of a torus bundle 
on a neighborhood of each points, which gives a family
of torus bundles satisfying some compatibility condition.
The various torus bundles may have tori of various dimensions as their fibers.
This generalization enables us to describe the localization phenomenon
more precisely. Even for the case in the previous paper,
we could replace the subset on which the index is localized
with a smaller subset.
A typical example of our generalization is 
the localization for the Riemann-Roch number of toric manifolds, 
for which we would need an orbifold version of our formulation. 
Moreover we can deal with some prequantized singular Lagrangian fibration 
without global toric action (Section~\ref{four-dim ex}). 
In our subsequent paper we will use the localization to give 
an approach to V. Guillemin and S. Sternberg's conjecture 
concerning \lq\lq quantization commutes with reduction" 
in the case of torus actions. 
Though our motivating example is the Riemann-Roch number of a symplectic manifold, 
the localization of the index is formulated for more general cases.
In fact we first establish a general framework to formulate the index of 
elliptic operators on complete manifolds (Section~\ref{An index theory for complete Riemannian manifolds}).
This section is independent of the other sections and
the framework might itself be interesting.

The mechanism of our localization is
explained as a version of Witten's deformation,
where the potential term itself is a first order differential operator.
Our geometric input data is a family of torus bundles.
Roughly speaking we deform the operator like
an {\it adiabatic limit} shrinking the various fiber directions
at the same time in a compatible manner.
The {\it potential term} corresponds 
to some average of the de Rham operators 
along the various fiber directions.

Formally our localization is formulated as
a property for the {\it index} of the elliptic operator on an open manifold:
let $D$ be an elliptic operator on a (possibly non-compact) manifold $X$,
and $V$ is an open subset of $X$ whose
complement  $X \setminus V$ is compact. 
Suppose $V$ has a certain geometric structure $s$,
by which we can modify $D$ to construct a Fredholm operator.
The index of the Fredholm operator depends on the data $(X,V,s,D)$.
Suppose the index satisfies the following properties.
Firstly the index is deformation invariant.
Secondly if $X'$ is an open subset of $X$ containing $X \setminus V$,
and hence $X' \setminus V$ is compact. 
Let $D'$ be the restriction of $D$ on $X'$.
We assume that
the structure $s$ has its {\it restriction $s'$}
on $V'=X'\cap V$.
Then we have the index of the Fredholm operator
constructed from the data
$(X',V',s',D')$.
The required excision property is 
the equality between the two indices.
We will construct Fredholm operators
which satisfy the above type of  {excision property}.
The structure $s$ on $V$ is not extended on 
the whole $X$. In this sense $X \setminus V$ is
regarded as the {\it singular locus} of the structure. 
The index is {\it localized} on the singular locus $X \setminus V$,
and we call it the {\it local index} of the data $(X,V,s,D)$.
When  $X \setminus V$ is of the form of
the disjoint union of
finitely many compact subsets,
the {\it localized} index is equal to
the sum of the contributions from the compact subsets.

Our first main result is the construction of the local index
when  the structure $s$ is the {\it acyclic compatible system} defined in Section~\ref{Compatible fibration and acyclic compatible system}.
Our second main result is a few basic properties of the local index,
in particular a {\it product formula} of the local index in 
Section~\ref{product formula of local indices}.

The organization of this paper is as follows. 
In Section~\ref{Summary} we give brief explanation of 
the motivation and the main results. 
In Section~\ref{An index theory for complete Riemannian manifolds}
 we give a formulation of the index of 
elliptic operators on complete Riemannian manifolds.
This formulation is a generalization of the one given in
Section 5 of \cite{index1}.
%
In Section~\ref{new} we give an abstract geometric framework to define an 
analytic index. 
We consider a collection of data 
which consists of an open covering of the manifold and 
operators defined on the open subsets, 
which satisfies several inequalities. 
By using a perturbation of an elliptic operator by the local operators,   
we show that an analytic index is well-defined for the given data 
in the framework of Section~\ref{An index theory for complete Riemannian manifolds}. 
In Section~\ref{sufficient condition} 
we explain that a Hamiltonian torus action on 
a prequantized symplectic manifold naturally gives rise to the 
data as in Section~\ref{new}. 
In Section~\ref{Compatible fibration and acyclic compatible system} 
we define the notion of an {acyclic compatible system},
which is a generalization of the geometric structure 
extracted from the Hamiltonian torus action case as 
in Section~\ref{sufficient condition}. 
It can be used as the geometric structure $s$ in the above explanation.
In Section~\ref{Main theorem} we define the {\it local index} 
under the assumption that
an acyclic compatible system is given on an end of the manifold.
In Section~\ref{product formula of local indices} we show a product formula for
the index defined in Section~\ref{Main theorem}.
In Section~\ref{four-dim ex} we give an example of our formulation
using some $4$-dimensional Lagrangian fibration with singular fibers.
In Appendix we give proofs of several theorems and lemmas
used in the main part.

\section{Summary}\label{Summary}

Main results in this paper are the following three. 
\begin{enumerate}
\item We introduce a family of operators which satisfy 
specific estimates. 
We establish an index theory on open manifolds with boundary condition described in terms of the family of operators.
\item We show that the index has several properties, such as, deformation invariance, an excision formula, and a gluing formula. 
\item We show a product formula in the index theory. 
\end{enumerate}


In \cite{Fujita-Furuta-Yoshida} we established an index theory 
under the following setting. 
Let $M$ be a Riemannian manifold and $V$ an open subset of $M$ 
whose complement is compact. 
Suppose that there exists a Riemannian submersion $\pi:V\to U$ 
and a Dirac type operator $D_{\pi}$ along the fibers of $\pi$. 
We assume that the kernel of $D_{\pi}$ is trivial along each orbit and 
$D_{\pi}$ anti-commutes with the Clifford multiplication of the 
lift of a tangent vector of $U$. 
Let $D$ be a Dirac type operator on $M$. 
By using a perturbation $D_t:=D+tD_{\pi}$ for $t\gg 1$ we can construct a 
Fredholm operator, 
where $D_{\pi}$ is extended as a zero operator on $M\setminus V$ by 
a cut-off function. The perturbation can be understood as an infinite dimensional version of Witten's deformation. 
The Fredholm index has deformation invariance, an excision formula, and a gluing formula. 
In particular, as a corollary of the excision formula, 
we have a localization formula of the index of Dirac type operator 
over a closed manifold.

%

In this paper we consider a generalization of the above setting. 
Typical example of the setting is as follows. 
Suppose that there exists a finite open covering $\cup_{\alpha\in A} V_{\alpha}$ of 
$V$. 
Moreover we assume that there exist a fiber bundle structure $\pi_{\alpha}:V_{\alpha}\to U_{\alpha}$ for each $\alpha\in A$ and 
a Dirac type operator $D_{\alpha}$ along orbits of $\pi_{\alpha}$ which 
have the same property as for the above $D_{\pi}$ and 
some compatibility conditions and estimates over $V_{\alpha}\cap V_{\beta}$. 
For this kind of data we develop an index theory 
based on the perturbation $D+t\sum_{\alpha}D_{\alpha}$ for $t\gg 1$, 
and we have the localization formula of index. 
Though the resulting index has the deformation invariance, 
it depends on the discrete data in general, e.g.,  
the number of the open subsets $\{V_{\alpha}\}_{\alpha}$. Such a structure appears naturally from Hamiltonian torus action 
on a prequantized symplectic manifold and the obtained localization formula is more precise than that in \cite{Fujita-Furuta-Yoshida}. 


In Section~\ref{Compatible fibration and acyclic compatible system} 
we formulate such a structure as a {\it compatible fibration} and 
a {\it  compatible system}. 
The non-degenerate condition and the estimates 
of Dirac-type operators along fibers, 
which are defined locally, is related to the 
non-degenerate condition of the weighted average of the operators (Lemma~\ref{strongly -> acyclic}). 
For the Hamiltonian torus action case the condition 
is determined by the existence of global parallel section of 
the prequantizing line bundle over the orbit (Proposition~\ref{Hamiltonian acyclic}).  


One of the main results is 
the definition of the {\it local index} for a 
manifold equipped with an acyclic compatible system on an end of $M$ 
(Definition~\ref{def. of local the index}). The local index has deformation invariance, an excision formula, and a gluing formula.


Another main result is the {\it product formula} 
for the local indices (Theorem~\ref{product}). 
Once we have an appropriate definition of the product of 
compatible fibrations and compatible systems, 
the product formula can be derived from the general setting in 
Section~\ref{An index theory for complete Riemannian manifolds}. 
As an application of the product formula 
we give a computation for 4-dimensional case 
in Section~\ref{four-dim ex}. 


In \cite{Braverman}\cite{Ma-Zhang}\cite{Paradan,Paradan2}, 
the authors developed an index theory of transverse elliptic operators 
using the perturbation of the Dirac-type operator along the orbit of 
the group action. 
This index theory has conceptual similarity with our index theory 
in the torus equivariant setting. 
One can settle the setting 
to compare these two equivariant index theories. 
The invariant part of these indices coincide, and 
for Hamiltonian circle action whose momentum map is proper and 
have no critical points, the two indices coincides with each other. But there is an example in which these do not coincide with each other.  
See \cite{Hajime} for details. 

\section{An index theory for complete Riemannian manifolds}
\label{An index theory for complete Riemannian manifolds}
In this section we give an index theory for complete Riemannian manifolds. Proofs are given in Appendix~\ref{Appendix B}. 
\subsection{Formulation of index on complete manifolds}
\label{formulation of index}

Suppose $M$ is a complete Riemannian manifold,
$W$ is a ${\mathbb Z}/2$-graded Hermitian vector bundle,
and $\sigma:TM \to \End (W)$ is a homomorphism such that
$\sigma(v)$ is a skew-Hermitian isomorphism of degree-one
for each $v\in TM\setminus \{0\}$.
Let $D$ be a degree-one formally self-adjoint first-order 
elliptic differential operator on
$W$ with principal symbol $\sigma$.
We assume that $\sigma$ and
the coefficients of $D$ are smooth.
We formulate an index theory on $M$ under the following assumption.
\begin{assumption}\label{assumption for operator}
\begin{itemize}
\item
$D$ has finite propagation speed:
there exists a positive real number $C_0$ satisfying
$|\sigma| \leq C_0$ uniformly on $M$,
\item
There exist a positive real number $\lambda_0>0$ and 
an open subset $V$ of $M$ with
its complement $M \setminus V$ compact such that
$$
\lambda_0 ||s||^2_{V}  \leq  ||Ds||^2_{V}  
$$
for any smooth compactly-supported section $s$ of $W$
with support contained in $V$.
\end{itemize}
\end{assumption}
It is known that the finite propagation speed
implies that $D$ is essentially self-adjoint \cite{Chernoff} and we have
the following property. 
\begin{theorem} \label{spectrum}
$D$ is essentially self-adjoint
as an operator on $L^2$-sections of $W$ and its
spectrum is discrete in $(-\sqrt{\lambda_0},\sqrt{\lambda_0})$. 
\end{theorem}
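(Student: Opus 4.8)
\emph{Strategy.} The statement has two parts --- essential self-adjointness of $D$ and discreteness of the spectrum in $(-\sqrt{\lambda_{0}},\sqrt{\lambda_{0}})$ --- and I would prove the first in order to have a functional calculus for the closure $\overline{D}$ available for the second. Essential self-adjointness is where the finite-propagation-speed hypothesis $|\sigma|\le C_{0}$ enters, and discreteness is where the a priori estimate of Assumption~\ref{assumption for operator} enters, via a localization-plus-Rellich argument.

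\emph{Essential self-adjointness.} Let $D^{*}$ be the adjoint of $D$ regarded as densely defined on compactly supported smooth sections. I would show that the deficiency spaces $\Ker(D^{*}\mp\sqrt{-1})$ are trivial. If $D^{*}u=\pm\sqrt{-1}\,u$, then $u$ is smooth by ellipticity and $Du=\pm\sqrt{-1}\,u$ in the ordinary sense. Since $M$ is complete there are cutoffs $\chi_{R}\in C^{\infty}_{c}(M)$ with $\chi_{R}\equiv1$ on the metric ball of radius $R$ about a basepoint and $\|d\chi_{R}\|_{\infty}\to0$. Pairing $Du=\pm\sqrt{-1}\,u$ with $\chi_{R}^{2}u$ and integrating by parts, the zeroth-order terms cancel because $D$ is formally self-adjoint, leaving $2\sqrt{-1}\,\|\chi_{R}u\|^{2}=\langle u,\sigma(d(\chi_{R}^{2}))u\rangle$, so that $\|\chi_{R}u\|^{2}\le C_{0}\|d\chi_{R}\|_{\infty}\|u\|^{2}$ by $|\sigma|\le C_{0}$. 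Letting $R\to\infty$ gives $u=0$, hence both deficiency indices vanish and $D$ is essentially self-adjoint. Write $\overline{D}$ for its closure and $E_{I}$ for the spectral projections of $\overline{D}$.

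\emph{Discreteness.} Fix $a\in(0,\sqrt{\lambda_{0}})$; it suffices to prove that $\mathcal{H}_{a}:=\operatorname{Ran}E_{[-a,a]}$ is finite dimensional, for then $\sigma(\overline{D})\cap[-a,a]$ consists of finitely many eigenvalues of finite multiplicity, and $a<\sqrt{\lambda_{0}}$ is arbitrary. Every $s\in\mathcal{H}_{a}$ lies in $\operatorname{Dom}(\overline{D}^{k})$ for all $k$, is smooth by elliptic regularity, and satisfies $\|\overline{D}s\|\le a\|s\|$. Choose a relatively compact open neighborhood $N$ of the compact set $M\setminus V$ and $\chi\in C^{\infty}(M)$ with $0\le\chi\le1$, $\chi\equiv1$ on $\overline{N}$ and $K:=\operatorname{supp}\chi$ compact; set $\phi:=1-\chi$, so $\operatorname{supp}\phi\subset V$, $\phi\equiv1$ off $K$, and $\operatorname{supp}d\phi\subset K$. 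For $s\in\mathcal{H}_{a}$ the section $\phi s$ is smooth, $L^{2}$ and supported in $V$, so after extending the a priori estimate to $\phi s$ I obtain $\lambda_{0}\|\phi s\|^{2}\le\|D(\phi s)\|^{2}$; writing $D(\phi s)=\phi\,\overline{D}s+\sigma(d\phi)s$ and using $|\sigma(d\phi)|\le C_{0}\|d\phi\|_{\infty}$ with $\operatorname{supp}d\phi\subset K$ gives $\|D(\phi s)\|\le\|\overline{D}s\|+C_{1}\|s\|_{L^{2}(K)}$. Feeding this back, together with $\|s\|^{2}=\|\phi s\|^{2}+\|\chi s\|^{2}+2\operatorname{Re}\int\phi\chi|s|^{2}$ and $\|\overline{D}s\|\le a\|s\|$, a short computation yields a constant $\delta=\delta(a)>0$ --- positive precisely because $1-a^{2}/\lambda_{0}>0$ --- with $\|s\|_{L^{2}(K)}^{2}\ge\delta\|s\|^{2}$ for all $s\in\mathcal{H}_{a}$: every element of $\mathcal{H}_{a}$ carries a fixed fraction of its mass on the compact set $K$. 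If $\mathcal{H}_{a}$ were infinite dimensional I would take an orthonormal sequence $\{s_{n}\}\subset\mathcal{H}_{a}$; by the interior elliptic estimate for the first-order elliptic operator $D$ it is bounded in $H^{1}$ on a relatively compact neighborhood of $K$, so by Rellich some subsequence converges in $L^{2}(K)$, necessarily to $0$ since $s_{n}\rightharpoonup0$ weakly in $L^{2}(M)$; this contradicts $\|s_{n}\|_{L^{2}(K)}^{2}\ge\delta$. Hence $\dim\mathcal{H}_{a}<\infty$, which is exactly the asserted discreteness of $\sigma(\overline{D})$ in $(-\sqrt{\lambda_{0}},\sqrt{\lambda_{0}})$.

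\emph{Main obstacle.} The one genuinely analytic point is the extension of the a priori estimate of Assumption~\ref{assumption for operator} from compactly supported sections to $\phi s$: one must approximate $\phi s$ by smooth sections with compact support inside $V$ by cutting off with completeness (the error controlled by $\|d\chi_{R}\|_{\infty}\to0$) and mollifying without leaving $V$, keeping careful track of the supports of $\phi$, $d\phi$ and $\chi$ relative to $V$ and $M\setminus V$. The remaining ingredients --- elliptic regularity to smooth elements of $\mathcal{H}_{a}$, the identity $[D,\phi]=\sigma(d\phi)$, the interior elliptic estimate and Rellich compactness --- are routine.
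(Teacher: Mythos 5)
Your argument is correct, but it follows a genuinely different route from the paper's. For essential self-adjointness the paper (Lemma~\ref{essentially self-adjoint}) shows directly that any weak solution of $Du=v$ with $u,v\in L^2$ can be approximated in graph norm by smooth compactly supported sections, using the cut-off functions of Lemma~\ref{family of cut-off functions} together with local elliptic regularity; you instead kill the deficiency spaces $\Ker(D^*\mp\sqrt{-1})$ by pairing with $\chi_R^2u$ and using $|\sigma|\le C_0$ and $\|d\chi_R\|_\infty\to 0$. Both arguments rest on the same two inputs (completeness cut-offs and bounded symbol), and yours is the classical Chernoff/Gromov--Lawson computation, so this part is a matter of taste. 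For discreteness the divergence is more substantial: the paper never invokes the spectral theorem for $\overline D$; it proves a compactness statement (Proposition~\ref{key proposition}, built on the mass-concentration estimate of Lemma~\ref{estimate with error term}) and then runs a variational min-max argument (Proposition~\ref{single operator}) that produces the eigenvalues of $D^2$ below $\lambda_0$ one by one. You instead pass to the spectral projections $E_{[-a,a]}$ of the self-adjoint closure and show $\operatorname{Ran}E_{[-a,a]}$ is finite dimensional via the estimate $\|s\|^2_{L^2(K)}\ge\delta\|s\|^2$ plus interior elliptic estimates and Rellich; your $\delta$-estimate is the exact analogue of Lemma~\ref{estimate with error term}, and your Rellich step plays the role of the compactness in Proposition~\ref{key proposition}. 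Your route is shorter and cleaner for Theorem~\ref{spectrum} in isolation; what the paper's route buys is that the same lemmas are stated for families $\{D_t\}$ and for the gluing setup, so they are reused verbatim to prove Theorem~\ref{deformation invariance} and Theorem~\ref{generalized deformation invariance}, which a purely spectral-projection argument for a single operator does not provide. Finally, the one point you flag as the main obstacle is indeed the only place needing care, and your fix is right: since elements of $\mathcal H_a$ are smooth by elliptic regularity, $\phi s$ is smooth with support in $V$, and cutting off by $\chi_R$ and using $\|\sigma(d\chi_R)\phi s\|\le C_0\|d\chi_R\|_\infty\|s\|\to 0$ extends the estimate of Assumption~\ref{assumption for operator} to $\phi s$ (no mollification is needed); this is the same device the paper uses at the start of the proof of Lemma~\ref{estimate with error term}.
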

The first part of Theorem~\ref{spectrum} is well-known, for example, see~\cite{Braverman-Milatovich-Shubin,Lesch}.  
The rest of the statement follows from Proposition~\ref{single operator}.

\begin{definition}
$E_{\lambda}$ is the vector space of smooth sections $s$ of $W$
such that $s$ is $L^2$-bounded and satisfies
$D^2 s =\lambda s$.
\end{definition}
Theorem~\ref{spectrum} implies that
$E_\lambda$ is zero for $\lambda<0$, and
$E_\lambda$ is finite dimensional for $\lambda <\lambda_0$.
Moreover there are only discrete values $\lambda< \lambda_0$
for which $E_\lambda$ is non-zero.
Note that
the super dimension of $E_\lambda$ is zero
for $0< \lambda <\lambda_0$, and hence
the super dimension of
$
\oplus_{\lambda<\lambda_1} E_\lambda
$
is constant for $0< \lambda_1<\lambda_0$.
\begin{definition}
$\ind D$ is the super dimension of $E_0$, or
the super dimension of $
\oplus_{\lambda<\lambda_1} E_\lambda
$ for  $0< \lambda_1 <\lambda_0$.
\end{definition}

The index has the following deformation invariance.
Let $\{D_t\}$ $(|t|<\epsilon)$ be a one-parameter 
family of degree-one formally self-adjoint first-order elliptic
differential operators
on $W$ with principal symbols $\{\sigma_t\}$.
\begin{assumption} \label{global continuity}
\begin{itemize}
\item Each $D_t$ and
$\sigma_t$ satisfy
Assumption~\ref{assumption for operator}
for common $\lambda_0$ and $V$.
\item
On each compact subset of $M$
the coefficients of $D_t$ are $C^\infty$ convergent
to those of $D_0$ as $t\to 0$.
\end{itemize}
\end{assumption}
We do not assume that the propagation speed is
uniform with respect to $t$.


Given the essentially self-adjointness as above a standard argument implies:
\begin{theorem}  \label{deformation invariance}
Under Assumption~\ref{global continuity} 
$\ind D_t$ is constant with respect to $t$.
\end{theorem}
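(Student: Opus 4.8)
The plan is to reduce the deformation invariance to two ingredients already available from the spectral picture established in Theorem~\ref{spectrum}: first, a uniform spectral gap near $0$ that is independent of $t$; and second, the continuity of the finite-dimensional spectral projections $P_t$ onto $\bigoplus_{\lambda<\lambda_1}E_\lambda(D_t)$ as $t\to 0$. Since $\ind D_t$ is the super-dimension of the image of $P_t$, and the super-dimension is an integer-valued, hence locally constant, function of a continuously varying finite-rank projection, continuity at each $t_0$ in $(-\epsilon,\epsilon)$ forces $\ind D_t$ to be locally constant, and therefore constant on the connected interval.

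First I would fix $t_0$ and choose $\lambda_1$ with $0<\lambda_1<\lambda_0$; by Theorem~\ref{spectrum} applied with the common $\lambda_0$ and $V$ of Assumption~\ref{global continuity}, the self-adjoint operator $D_{t}^2$ has discrete spectrum in $[0,\lambda_0)$ for every $t$, and (after possibly shrinking $\lambda_1$ away from the finitely many eigenvalues of $D_{t_0}^2$ below $\lambda_0$) the circle $|z-?|$ — more precisely a contour $\gamma$ in $\C$ enclosing $[0,\lambda_1]$ and avoiding the spectrum of $D_{t_0}^2$ — can be used to write
\[
P_{t}=\frac{1}{2\pi\sqrt{-1}}\oint_{\gamma}(z-D_{t}^2)^{-1}\,dz .
\]
The key analytic step is to show $P_t\to P_{t_0}$ in operator norm as $t\to t_0$. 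This is where I expect the main obstacle to lie: the propagation speed is not assumed uniform in $t$, so one cannot directly control resolvents on all of $L^2$ by a naive perturbation estimate $\|D_t-D_{t_0}\|$, which is typically unbounded. Instead I would exploit the structure behind Theorem~\ref{spectrum}: the gap estimate $\lambda_0\|s\|_V^2\le\|D_ts\|_V^2$ confines the relevant low-energy spectral data to the compact set $M\setminus V$ (together with a collar), where the $C^\infty$-convergence of the coefficients of $D_t$ to those of $D_{t_0}$ from Assumption~\ref{global continuity} does give genuine control. Concretely, one localizes with a cutoff supported near the compact region, uses the min–max characterization (Proposition~\ref{single operator}, invoked in the proof of Theorem~\ref{spectrum}) to see that eigensections with eigenvalue $<\lambda_1$ are essentially concentrated there, and then applies elliptic regularity and Rellich compactness on that compact piece to upgrade the coefficient convergence to convergence of the finite-dimensional eigenspaces.

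Carrying this out, the steps in order are: (i) fix $t_0$, pick $\lambda_1\in(0,\lambda_0)$ not an eigenvalue of $D_{t_0}^2$ and a contour $\gamma$ as above, valid uniformly for $t$ near $t_0$ by the uniform gap; (ii) prove the resolvents $(z-D_t^2)^{-1}$ converge to $(z-D_{t_0}^2)^{-1}$ strongly, and in norm after composing with the spectral cutoff, via the localization-to-$M\setminus V$ argument sketched above; (iii) conclude $P_t\to P_{t_0}$ in norm, hence $\operatorname{rank}P_t=\operatorname{rank}P_{t_0}$ and, since the $\Z/2$-grading is preserved, $\operatorname{sdim}\operatorname{im}P_t=\operatorname{sdim}\operatorname{im}P_{t_0}$ for $t$ near $t_0$; (iv) deduce $\ind D_t=\ind D_{t_0}$ locally, hence globally on $(-\epsilon,\epsilon)$ by connectedness. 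The routine functional-analytic details of (ii)–(iii) — the contour estimates and the Rellich argument — I would defer to a lemma, very much in the spirit of the technical lemmas the paper relegates to the Appendix; the conceptual content is entirely the uniform gap plus localization onto the fixed compact set.
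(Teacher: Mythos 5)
The gap is in steps (ii)--(iii), which you defer to a ``routine'' lemma but which are in fact the entire content of the theorem. Two concrete problems. First, the contour representation of $P_t$ requires the spectrum of $D_t^2$ to avoid a fixed neighborhood of the point where $\gamma$ crosses $(\lambda_1,\lambda_0)$ \emph{for all $t$ near $t_0$}; the ``uniform gap'' $\lambda_0$ only says that each $\sigma(D_t^2)$ is discrete in $[0,\lambda_0)$, and by itself does not prevent eigenvalues of $D_t^2$ from drifting onto, or accumulating at, that crossing point as $t\to t_0$. Ruling this out is an eigenvalue-continuity statement essentially equivalent to what is being proved, so it cannot be assumed when setting up the contour. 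Second, norm convergence $P_t\to P_{t_0}$ cannot be obtained by any perturbative resolvent estimate: Assumption~\ref{global continuity} controls $D_t-D_{t_0}$ only on compact sets and the propagation speeds are not uniform in $t$, so there is no global smallness of the perturbation. What does come cheaply (compactly supported smooth sections are a common core by essential self-adjointness, and $D_t\phi\to D_{t_0}\phi$ for such $\phi$) is \emph{strong} resolvent convergence, but strong convergence of spectral projections only yields lower semicontinuity of the rank, $\operatorname{rank}P_{t_0}\le\liminf_{t\to t_0}\operatorname{rank}P_t$, i.e.\ the easy half. The hard half --- that the rank does not jump up along the family --- is exactly where low-energy eigensections of $D_t$ could a priori spread out or escape to infinity; preventing this needs the tail estimate of Lemma~\ref{estimate with error term}, uniform in $t$, before any Rellich argument on a compact piece can bite, and the natural conclusion one then extracts is a nonzero \emph{weak} limit (Proposition~\ref{key proposition}), not a norm limit. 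As written, your proposal asserts rather than proves precisely this step.

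For comparison, the paper never claims convergence of resolvents or projections. It proves two one-sided dimension inequalities in each $\Z/2$-degree: Corollary~\ref{limsup}, via the weak-compactness/localization argument of Proposition~\ref{key proposition} and Proposition~\ref{min-max}, and Proposition~\ref{liminf}, by truncating the $D_0$-eigensections with the cut-off functions of Lemma~\ref{family of cut-off functions} and applying Corollary~\ref{single operator 2}; choosing $\lambda_1$ with $E_{\lambda_1}(D_0)=0$ (possible by Proposition~\ref{single operator}) then forces the two bounds to coincide, giving equality of dimensions, hence of super-dimensions, for $t$ near $0$. If you wish to keep the Riesz-projection packaging you would still have to establish those two propositions first, at which point the projection formalism adds nothing; the norm convergence of $P_t$, while plausibly true here, is strictly stronger than what the index requires.
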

A proof is given in the end of Appendix~\ref{proof of index theory}. 
\begin{remark}{\rm
So far we are fixing $M$ and $W$.
In Section~\ref{gluing} we will formulate
a deformation for which $M$ and $W$ can vary.
We give a proof of Theorem~\ref{deformation invariance}
so that it can be directly generalized to this case.
The generalization immediately
implies an excision property of index for complete Riemannian
manifolds.
}
\end{remark}

\subsection{Generalized deformation invariance and gluing formula} \label{gluing}

In this subsection we generalize Theorem~\ref{deformation invariance}.
We first need to generalize Assumption~\ref{global continuity}.

Let $(M, W, \sigma, D,V)$ be 
as in Section~\ref{formulation of index}.
Let $\{M_i\}_{i\in {\mathbf N}}$ be a family of 
complete Riemannian manifolds. 
We further 
suppose that there exists a $\Z/2$-graded Clifford module bundle $W_i$ 
over each $M_i$ and an open subset $V_i$ 
such that $M_i\setminus V_i$ is compact. 
Suppose that there exists the following data. 
\begin{itemize}
\item sequence of compact subsets $\{K_i\}$ of $M$ 
\item  
isometric open embeddings $\{\iota_i: int(K_i) \to M_i\}$ 
\item  
isomorphisms $\{\tilde\iota_i:W|_{int(K_i)} \cong\iota_i^* W_i\}$
as ${\mathbb Z}/2$-graded Hermitian vector bundle over $int(K_i)$. 
\item differential operators $\{\tilde D_i\}$, each of which is defined on a neighborhood of $K_i$ 
\end{itemize}




\begin{assumption}\label{assumption gluing}
{\rm
\begin{enumerate}
\item $K_1 \cup V=M$,$K_i \subset int(K_{i+1})$ and $M=\cup_i K_i$. 
\item $\iota_i(M \setminus V)=M_i \setminus V_i$.
\item The coefficients of $\tilde D_i$ are $C^\infty$-convergent to those of $D$ on each compact subset as $i \to \infty$.
\item The differential operator $\iota_i\tilde D_i$ on $\iota_i(int(K_i))$ has an extension $D_i$ to $M_i$. 
\item
The data $(M_i, W_i, \sigma(D_i), D_i,V_i)$ satisfy
Assumption~\ref{assumption for operator}
with the same constant $\lambda_0$ for
$(M, W, \sigma, D,V)$. 
\end{enumerate}
}
\end{assumption}
We do not assume that the propagation speed of $\{D_i\}$
is uniformly bounded with respect to $i$.

\begin{theorem} \label{generalized deformation invariance}
Under Assumption~\ref{assumption gluing},
$\ind\, D_i$ is equal to $\ind\, D$ for large $i$. 
\end{theorem}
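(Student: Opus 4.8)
The plan is to transcribe the proof of Theorem~\ref{deformation invariance} (i.e.\ the chain of results in Section~\ref{proof of index theory}) almost verbatim, replacing the single manifold $M$ by the family $M_i$ and transporting sections back and forth along the embeddings $\iota_i\colon \mathrm{int}(K_i)\to M_i$ and the bundle isomorphisms $\tilde\iota_i$. First note that each $(M_i,W_i,\sigma_i,D_i,V_i)$ satisfies the setting of Section~\ref{formulation of index} and, by Assumption~\ref{assumption gluing}, satisfies Assumption~\ref{assumption for operator} with the common constant $\lambda_0$; hence Theorem~\ref{spectrum} applies to each $D_i$, $\ind D_i$ is defined, and for a fixed $0<\lambda_1<\lambda_0$ avoiding the (discrete) spectrum of $D_i$, $\ind D_i$ equals the super-dimension of $\oplus_{\lambda\le\lambda_1}E_\lambda(D_i)$.

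The core step is a gluing analogue of Proposition~\ref{key proposition}: from $s_i\in L^2(M_i,W_i)$ with $\|s_i\|_{M_i}=1$ and $\|D_i s_i\|_{M_i}^2\le\lambda_1$, produce a nonzero $s_\infty\in L^2(M,W)$ with $\|D_0 s_\infty\|_M^2\le\lambda_1\|s_\infty\|_M^2$. One pulls $s_i$ back to $\mathrm{int}(K_i)$, uses the interior elliptic a priori estimate together with the $C^\infty$-convergence $\iota_i^*D_i\to D_0$ on compact sets to get local $L^2_1$-bounds, and extracts by a diagonal argument over the exhaustion $\{K_i\}$ a subsequence converging strongly on every compact subset of $M$ to some $s_\infty$; the passage to the limit in the weak identity $\int(D_i(\tilde\iota_i\phi),s_i)=\int(\tilde\iota_i\phi,D_i s_i)$ gives $D_0 s_\infty=u_\infty$ weakly and the estimate on $\|D_0 s_\infty\|_M$, as in Proposition~\ref{key proposition}. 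That $s_\infty\neq 0$ is where the coincidence of ends enters: since $\iota_i(M\setminus V)=M_i\setminus V_i$ and the $D_i$ share $\lambda_0$, Lemma~\ref{estimate with error term} holds on each $M_i$, and — choosing $K\supset M\setminus V$ and the auxiliary compact $K'$ inside a common $\mathrm{int}(K_{i_0})$ and transporting the cut-off of Lemma~\ref{family of cut-off functions} — one may take $K'$ uniform in $i$, using $\max_{K'}|\sigma_i|\to\max_{K'}|\sigma|$. The uniform escape estimate then rules out $\|s_i\|_{K'\setminus K}\to 0$, whence $s_\infty\neq 0$.

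With this, both halves of the argument follow. For the upper bound, mimic Proposition~\ref{min-max}: if $E^{(i)}\subset L^2(M_i,W_i)$ are $\Z/2$-graded with $\|D_i s\|^2\le\lambda_1\|s\|^2$ on them, then $\dim\oplus_{\lambda\le\lambda_1}E_\lambda(D_0)\ge\limsup_i\dim E^{(i)}$ in each degree; choose $s_i\in E^{(i)}$ of unit norm orthogonal to the transported truncated eigensections $e_k^{(i)}=\tilde\iota_i(\rho_{a_i,\epsilon_i}e_k)$ of $D_0$ (this is precisely the device of Remark~\ref{remark for min-max}, which permits the comparison of the $D_0$-data with $M_i$), apply the gluing version of Proposition~\ref{key proposition}, and contradict Proposition~\ref{single operator}. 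Taking $E^{(i)}=\oplus_{\lambda\le\lambda_1}E_\lambda(D_i)$ — legitimate since $\|D_i s\|^2=\lambda\|s\|^2\le\lambda_1\|s\|^2$ there by Proposition~\ref{single operator} applied to $D_i$ — gives $\dim^{\deg}\oplus_{\lambda\le\lambda_1}E_\lambda(D_0)\ge\limsup_i\dim^{\deg}\oplus_{\lambda\le\lambda_1}E_\lambda(D_i)$. For the lower bound, mimic Proposition~\ref{liminf}: truncate an eigenbasis of $E:=\oplus_{\lambda<\lambda_1}E_\lambda(D_0)$, transport it to $M_i$, and use Lemmas~\ref{partial integral 1}--\ref{partial integral 2} together with the $C^\infty$-convergence of the coefficients of $\iota_i^*D_i$ on the fixed compact support to see that the span is $(\dim E)$-dimensional and satisfies $\|D_i s\|^2\le\lambda_1'\|s\|^2$ for some $\lambda_1'<\lambda_1$; then Corollary~\ref{single operator 2} on $M_i$ gives $\dim^{\deg}\oplus_{\lambda<\lambda_1}E_\lambda(D_i)\ge\dim^{\deg}E$ for large $i$. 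Choosing $\lambda_1\in(0,\lambda_0)$ with $E_{\lambda_1}(D_0)=0$, the two inequalities squeeze $\dim^{\deg}\oplus_{\lambda\le\lambda_1}E_\lambda(D_i)=\dim^{\deg}\oplus_{\lambda\le\lambda_1}E_\lambda(D_0)$ for all large $i$; taking super-dimensions yields $\ind D_i=\ind D$.

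The main obstacle is the gluing version of Proposition~\ref{key proposition}, and within it the nonvanishing of the weak limit $s_\infty$: one must verify that Lemma~\ref{estimate with error term} can be applied on the varying $M_i$ with a single auxiliary compact set $K'$ and uniformly bounded constants, using only that the ends agree, that $\lambda_0$ is common, and that the coefficients converge on compact sets. Everything past that point is a routine transcription, which is why Section~\ref{proof of index theory} was already phrased in a form stable under replacing $M$ by an exhausted family.
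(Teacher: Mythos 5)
Your proposal is correct and follows essentially the same route as the paper: the paper's proof likewise generalizes Proposition~\ref{key proposition}, Proposition~\ref{liminf}, and Proposition~\ref{min-max} (the latter via the device of Remark~\ref{remark for min-max}, taking the comparison sections $e_k^{(i)}$ compactly supported in $int(K_i)$ and transported by $\tilde\iota_i$), and then runs the squeeze of Section~\ref{proof of index theory}. Your explicit treatment of the nonvanishing of the weak limit via a uniform choice of $K'$ in Lemma~\ref{estimate with error term} is exactly the content the paper compresses into ``straightforwardly generalized.''
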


A proof of Theorem~\ref{generalized deformation invariance} 
is explained in Appendix~\ref{proof of Theorem 3.9}.
In particular we have the following formula, which 
is a generalization of the gluing formula 
of indices of manifolds with cylindrical end. 


\begin{proposition}\label{gluing prop}
Let $(M, W, \sigma, D, V)$ be a data 
satisfying Assumption~\ref{assumption for operator}. 
Let $\{(M_i, W_i, \sigma(D_i), D_i, V_i)\}_{i}$ be a data satisfying Assumption~\ref{assumption gluing}. 
Suppose that each $M_i$ is connected and $M$ is the disjoint union of $M'$ and $M''$.
Let $(W',\sigma',D',V')$ and $(W'',\sigma'',D'',V'')$ be
the restrictions of $(W,\sigma,D,V)$ to
$M'$ and $M''$ respectively.
Then we have 
$$
\ind\,D_i=\ind\,D'+\ind\,D'', 
$$
for sufficiently large $i$. 
\end{proposition}

\begin{proof}
Since $\ind\,D=\ind\, D' +\ind\, D''$,
Theorem~\ref{generalized deformation invariance} implies 
the required gluing formula. \
\end{proof}

When $M'$ and $M''$ have  cylindrical end components and 
all $M_i$ are diffeomorphic manifold which are obtained by 
gluing the cylindrical end components of $M'$ and $M''$, 
Proposition~\ref{gluing prop} can be regarded as the 
usual gluing formula.

On the other hand essentially self-adjointness and the 
inequality in Assumption~\ref{assumption for operator} implies the following vanishing lemma. 
\begin{lemma}\label{vanishing lemma}
Let $(M, W, \sigma, D, V)$ be a data 
satisfying Assumption~\ref{assumption for operator}. 
If $M=V$, then we have $\ker D\cap L^2(W)=0$. 
\end{lemma}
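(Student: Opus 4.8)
The plan is to combine the partial integration formula with the coercivity estimate that makes up the second half of Assumption~\ref{assumption for operator}. Suppose $M = V$ and let $s \in \ker D \cap L^2(W)$; in particular $Ds = 0$, so $D^2 s = 0$ is trivially $L^2$-bounded, and $s$ is a smooth section (by elliptic regularity, since $D$ is elliptic with smooth coefficients and $Ds=0$ weakly). First I would apply Lemma~\ref{partial integral 1} with $D_\tau = D$: since $s$ and $D^*Ds = D^2 s = 0$ are both $L^2$-bounded, the lemma gives that $Ds$ is $L^2$-bounded (which we already know) and
$$
\int_M (D^2 s, s) = \int_M |Ds|^2.
$$
The left-hand side is zero, so $\|Ds\|_M^2 = 0$, i.e.\ $Ds = 0$ — consistent, but by itself this only re-derives the hypothesis. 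The real input is the inequality in Assumption~\ref{assumption for operator}.

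The key step is then to feed this into the coercivity estimate. Since $M = V$, the second bullet of Assumption~\ref{assumption for operator} says there is $\lambda_0 > 0$ with $\lambda_0 \|s'\|_M^2 \le \|Ds'\|_M^2$ for every smooth compactly supported section $s'$ of $W$. I need to promote this from compactly supported sections to the $L^2$-bounded section $s$. For this I would use the family of cut-off functions from Lemma~\ref{family of cut-off functions}: set $s_{a,\epsilon} = \rho_{a,\epsilon} s$, a compactly supported (not necessarily smooth, but approximable) section, and write $D s_{a,\epsilon} = \rho_{a,\epsilon} Ds + \sigma\cdot((d\rho_{a,\epsilon})\otimes s) = \sigma\cdot((d\rho_{a,\epsilon})\otimes s)$ because $Ds = 0$. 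Since $|d\rho_{a,\epsilon}| < C\epsilon$ and $|\sigma| \le C_0$ uniformly, we get $\|D s_{a,\epsilon}\|_M \le C_0 C \epsilon \|s\|_M$. Applying the coercivity estimate to $s_{a,\epsilon}$ (after a routine smooth approximation as in the proofs of Lemma~\ref{partial integral 1} and Lemma~\ref{essentially self-adjoint}) yields
$$
\lambda_0 \|s_{a,\epsilon}\|_M^2 \le \|D s_{a,\epsilon}\|_M^2 \le C_0^2 C^2 \epsilon^2 \|s\|_M^2.
$$
Letting $a \to \infty$ so that $\rho_{a,\epsilon} \to 1$ pointwise, monotone convergence gives $\lambda_0 \|s\|_M^2 \le C_0^2 C^2 \epsilon^2 \|s\|_M^2$; then letting $\epsilon \to 0$ forces $\|s\|_M = 0$, so $s = 0$.

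The main obstacle I anticipate is the bookkeeping with the cut-off functions and the smooth approximation step: one must be careful that the order of limits ($a \to \infty$ then $\epsilon \to 0$) is legitimate and that applying the coercivity inequality to the merely Lipschitz section $\rho_{a,\epsilon} s$ is justified by approximating in $L^2_1$ on the (compact) support of $\rho_{a,\epsilon}$, exactly as is done elsewhere in Section~\ref{partial integration}. Everything else is a direct assembly of Lemma~\ref{partial integral 1}, Lemma~\ref{family of cut-off functions}, and the standing Assumption~\ref{assumption for operator}; no new idea is needed beyond observing that on $M = V$ the coercivity estimate has no "compact core" to spoil it.
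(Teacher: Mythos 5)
Your argument is correct and is essentially the proof the paper intends: the paper gives no written proof beyond citing Lemma~\ref{partial integral 1} and the second inequality of Assumption~\ref{assumption for operator}, and your cut-off computation $D(\rho_{a,\epsilon}s)=\sigma\cdot\bigl((d\rho_{a,\epsilon})\otimes s\bigr)$ together with $|\sigma|\le C_0$, $|d\rho_{a,\epsilon}|<C\epsilon$ and the coercivity on $V=M$ is exactly the mechanism behind that citation. Two minor simplifications: the opening use of Lemma~\ref{partial integral 1} is redundant (it only re-derives $Ds=0$), and since $\rho_{a,\epsilon}$ is smooth and $s$ is smooth by elliptic regularity, the section $\rho_{a,\epsilon}s$ is already smooth and compactly supported, so the extra $L^2_1$-approximation step you worry about is unnecessary.
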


Using Proposition~\ref{gluing prop} and 
the above Lemma~\ref{vanishing lemma}, 
we have the following excision formula of index. 

\begin{proposition}
Let $(M, W, \sigma, D, V)$ be a data 
satisfying Assumption~\ref{assumption for operator}. 
Let $\{(M_i, W_i, \sigma(D_i), D_i, V_i)\}_{i}$ be a data satisfying Assumption~\ref{assumption gluing}. 
Suppose $M$ is the disjoint union of $M'$ and $M''$,
and $M''$ is contained in $V$.
Let $(W',\sigma',D',V')$ be
the restriction of $(W,\sigma,D,V)$ to
$M'$.
Then we have 
$$
\ind\,D_i=\ind\,D' 
$$
for sufficiently large $i$.  
\end{proposition}

\subsection{Product formula}

Following Atiyah and Singer \cite{Atiyah-Singer},
we formulate a product formula for elliptic operators.
Except that we need Lemma~\ref{partial integral 1}
for partial integration on complete Riemannian manifolds, 
the argument is exactly the same as in \cite{Atiyah-Singer}.
The main purpose of this subsection is to formulate
Assumption~\ref{assumption product} below, which is crucial
for the case of complete manifolds.
To apply the product formula it is necessary to
check the assumption for specific operators, 
which is our another task  
and is carried out in Section~\ref{product formula of local indices}.

For $k=0,1$ let $M_k$ be a complete Riemannian manifold, 
$W_k$ a ${\mathbb Z}/2$-graded Hermitian vector bundle
over $M_k$, and $D_k:\Gamma(W_k) \to \Gamma(W_k)$ 
a degree-one formally self-adjoint order-one elliptic
operator
with principal symbol $\sigma_k$.


Let $G$ be a compact Lie group and
$P \to M_0$ a principal $G$-bundle.
Suppose $G$ acts on $M_1$ isometrically,
$W_1$ is $G$-equivariant ${\mathbb Z}/2$-graded Hermitian vector bundle,
and $D_k$ is a $G$-invariant operator.

Then $M=P \times_G M_1$ is a fiber bundle over $M_0$ with
fiber $M_1$. We write $\pi:M \to M_0$ for the projection map.
Let $\tilde W_0$ and $\tilde W_1$ be the 
vector bundles over $M$ defined by
$\tilde W_0=\pi^* W_0$ and
$\tilde W_1=P \times_G W_1$,
and we put $W=\tilde W_0 \otimes \tilde W_1$.

We would like to lift $D_0$ and $D_1$ 
as operators on $W$.
The lift of $D_1$ is given straightforward:
Defining the operator 
$
\tilde D_1
$
on
$\Gamma(\tilde W_0 \otimes \tilde W_1)$
by $\epsilon \otimes D_1$ on each fiber of $\pi:M \to M_0$,
where $\epsilon:W_0 \to W_0$ is equal to $+id$ on the
degree $0$ part of $W_0$, and to $-\id$ on the degree $1$ part of $W_1$.

We next construct  
$\tilde D_0:\Gamma(W) \to \Gamma(W)$.
Let $\{V_\alpha\}$ be an open covering  of $M_0$ and
$\{\rho^2_\alpha\}$ a partition of unity.
Suppose we have local trivializations $P|_{V_\alpha}\cong V_\alpha \times G$
with transition functions $g_{\alpha\beta}$.
Using the local trivialization on $V_\alpha$ we have the identifications
$
{\pi^{-1}(V_\alpha)} \cong {V_\alpha} \times M_1$ and
$W |_{\pi^{-1}(V_\alpha)} \cong {W_0}|_{V_\alpha} \times W_1$.
Let $\tilde{D}_{0,\alpha}$ be the operator on
$W |_{\pi^{-1}(V_\alpha)}$ defined by $D_0$ using the product structure.
We put
$
\tilde D_0:=\sum_\alpha \rho_\alpha 
\tilde D_{0,\alpha} \rho_\alpha.
$
\begin{lemma}\label{anti-commute:section3}
$\tilde D_0 \tilde D_1 +\tilde D_1 \tilde D_0 =0$.
\end{lemma}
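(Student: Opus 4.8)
The plan is to reduce the identity to a local, fiberwise statement over each $V_\alpha$, and then to check that the cut-off functions $\rho_\alpha$, being pulled back from $M_0$, pass harmlessly through $\tilde D_1$.

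First I would describe the local model. Over $\pi^{-1}(V_\alpha)$ the chosen trivialization of $P$ gives identifications $\pi^{-1}(V_\alpha)\cong V_\alpha\times M_1$ and $W|_{\pi^{-1}(V_\alpha)}\cong W_0|_{V_\alpha}\boxtimes W_1$, under which $\tilde D_{0,\alpha}=D_0\otimes\id$ acts on the $W_0$-factor and $\tilde D_1=\epsilon\otimes D_1$ acts on the $W_1$-factor by $D_1$ and on the $W_0$-factor by the grading involution $\epsilon$. Since $D_0$ has degree one we have $D_0\epsilon=-\epsilon D_0$, and hence
\[
\tilde D_{0,\alpha}\tilde D_1+\tilde D_1\tilde D_{0,\alpha}=(D_0\epsilon+\epsilon D_0)\otimes D_1=0
\]
as operators on $\Gamma(W|_{\pi^{-1}(V_\alpha)})$; this is exactly the reason the factor $\epsilon$ is inserted in the definition of $\tilde D_1$, as in the external product of \cite{Atiyah-Singer}. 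I would also note, although it is already implicit in the construction, that $\tilde D_1$ is globally well defined on $\Gamma(W)$ precisely because $D_1$ is $G$-invariant, so these local descriptions agree on overlaps.

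Then I would assemble the global statement. Each $\rho_\alpha$ is pulled back from $M_0$, hence is constant along the fibers of $\pi$; since $\tilde D_1$ differentiates only in the fiber directions and acts by a bundle endomorphism in the base directions, multiplication by $\rho_\alpha$ commutes with $\tilde D_1$. Using $\tilde D_1\rho_\alpha=\rho_\alpha\tilde D_1$ twice together with the local anticommutation above,
\[
\rho_\alpha\tilde D_{0,\alpha}\rho_\alpha\,\tilde D_1+\tilde D_1\,\rho_\alpha\tilde D_{0,\alpha}\rho_\alpha
=\rho_\alpha\bigl(\tilde D_{0,\alpha}\tilde D_1+\tilde D_1\tilde D_{0,\alpha}\bigr)\rho_\alpha=0,
\]
and summing over $\alpha$ with $\tilde D_0=\sum_\alpha\rho_\alpha\tilde D_{0,\alpha}\rho_\alpha$ gives $\tilde D_0\tilde D_1+\tilde D_1\tilde D_0=0$.

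There is no real obstacle here; the two points requiring a little care are (i) the sign bookkeeping, so that $\tilde D_{0,\alpha}$ and $\tilde D_1$ genuinely anticommute rather than commute — this depends on $D_0$ having odd degree and on the grading operator $\epsilon$ being present in $\tilde D_1$ — and (ii) the claim that each $\rho_\alpha$ commutes with $\tilde D_1$, which follows from $\rho_\alpha$ being pulled back from the base while $\tilde D_1$ is fiberwise. The rest is routine manipulation with the partition of unity.
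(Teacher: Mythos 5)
Your proof is correct and follows exactly the same route as the paper, which proves the lemma from the two facts $\tilde D_1 \tilde D_{0,\alpha}+\tilde D_{0,\alpha}\tilde D_1=0$ and $\tilde D_1\rho_\alpha-\rho_\alpha\tilde D_1=0$; you simply spell out why each of these holds (the grading operator $\epsilon$ with $D_0$ odd, and $\rho_\alpha$ being pulled back from the base).
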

\begin{proof}
It follows from
$\tilde D_1 \tilde D_{0,\alpha}+\tilde D_{0,\alpha}\tilde D_1=0$
and $\tilde D_1 \rho_\alpha -\rho_\alpha \tilde D_1=0$. \
\end{proof}

Let $R$ be a $G$-invariant ${\mathbb Z}/2$-graded 
finite dimensional subspace
of $\Gamma(M_1,W_1)$,
and $\tilde R$ the fiber bundle $P \times_G R$ over $M_0$.
Then we have an embedding 
\begin{equation} \label{embedding}
\Gamma(M_0, W_0 \otimes \tilde R) \to \Gamma(M, \tilde W)
\end{equation}
which is preserved by the action of $\tilde D_0$.
Let $\tilde D_{R}$ be the restriction of $\tilde D_0$
on $\Gamma(M_0, W_0 \otimes \tilde R)$. 
Then $\tilde D_{R}$ is a differential operator on $W_0 \otimes \tilde R$
with principal symbol $\sigma_0 \otimes \id_{\tilde R}$.
We put $R=E_0(D_1)$ and assume the following conditions. 

\begin{assumption}\label{assumption product}
\begin{enumerate}
\item
$D_0$ has finite propagation speed, i.e., $\sigma_0$ is $L^\infty$-bounded.
\item
The data $(M_1,W_1,D_1)$ satisfies Assumption~\ref{assumption for operator}.
\item
The data $(M,W,\tilde D_0 +\tilde D_1)$ satisfies Assumption~\ref{assumption for operator}.
\end{enumerate}
\end{assumption}
We do not assume the second condition 
of Assumption~\ref{assumption for operator} for 
the data $(M_0,W_0,D_0)$.

Recall that $\tilde D_{R}$ is given by $\tilde D_0$ via
the embedding (\ref{embedding}). Since $\tilde D_1=0$ on the image
of the embedding, 
Assumption~\ref{assumption product} implies that
the data $(M_0, W_0 \otimes \tilde R,\tilde D_{R})$ 
satisfies Assumption~\ref{assumption for operator} as well.
\begin{theorem}[product formula]
$\ind (\tilde D_0+\tilde D_1)= \ind \tilde D_{R}$
\end{theorem}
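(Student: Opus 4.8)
The plan is to follow Atiyah--Singer and compute the $L^2$-kernel of $\tilde D_0+\tilde D_1$ directly, identifying it, as a $\Z/2$-graded space, with the $L^2$-kernel of $\tilde D_R$ via the embedding \eqref{embedding}. Both operators satisfy Assumption~\ref{assumption for operator} (for $\tilde D_0+\tilde D_1$ by Assumption~\ref{assumption product}~(4), and for $\tilde D_R$ by the remark following Assumption~\ref{assumption product}), so for each the index is the super-dimension of its $E_0$; moreover $E_0$ coincides with the $L^2$-kernel in each case, since $D^2s=0$ together with Lemma~\ref{partial integral 1} forces $Ds=0$, and elliptic regularity makes $L^2$-solutions smooth. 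Hence it suffices to prove $\ker_{L^2}(\tilde D_0+\tilde D_1)\cong\ker_{L^2}\tilde D_R$ as $\Z/2$-graded spaces.

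The algebraic input is Lemma~\ref{anti-commute:section3}, which gives $(\tilde D_0+\tilde D_1)^2=\tilde D_0^2+\tilde D_1^2$ as an operator identity. Given $s\in L^2(M,W)$ with $(\tilde D_0+\tilde D_1)s=0$ (hence $s$ smooth), I would apply the cut-off functions $\rho_{a,\epsilon}$ of Lemma~\ref{family of cut-off functions} and integrate by parts exactly as in the proof of Lemma~\ref{partial integral 1}: since $\rho_{a,\epsilon}s$ is compactly supported and $\tilde D_0,\tilde D_1$ are formally self-adjoint, expanding $\|(\tilde D_0+\tilde D_1)(\rho_{a,\epsilon}s)\|_M^2$ with the anti-commutation relation yields
\[
\|\tilde D_0(\rho_{a,\epsilon}s)\|_M^2+\|\tilde D_1(\rho_{a,\epsilon}s)\|_M^2=\|(\tilde D_0+\tilde D_1)(\rho_{a,\epsilon}s)\|_M^2=\|\sigma(d\rho_{a,\epsilon})\,s\|_M^2\le C\,\epsilon^2\,\|s\|_M^2,
\]
where $\sigma$ is the (globally $L^\infty$-bounded, by Assumption~\ref{assumption product}~(1)--(2)) total principal symbol and $C$ is independent of $a,\epsilon$. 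Letting $a\to\infty$ and $\epsilon\to0$, and using Fatou's lemma on $\tilde D_0(\rho_{a,\epsilon}s)=\rho_{a,\epsilon}\tilde D_0 s+\sigma(d\rho_{a,\epsilon})s$ (and the analogue for $\tilde D_1$), gives $\tilde D_0 s=0$ and $\tilde D_1 s=0$ separately.

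Now $\tilde D_1=\epsilon\otimes D_1$ with $\epsilon$ invertible, so $\tilde D_1 s=0$ means $D_1(s|_{\pi^{-1}(b)})=0$ for every $b\in M_0$; since $\ker D_1=E_0(D_1)=R$ on $L^2(M_1,W_1)$ (Lemma~\ref{partial integral 1} and Assumption~\ref{assumption product}~(2)--(3)), this says precisely that $s$ lies in the image of \eqref{embedding}, i.e. $s$ is a section of $W_0\otimes\tilde R$. On that image $\tilde D_0$ restricts to $\tilde D_R$, so $\tilde D_0 s=0$ becomes $\tilde D_R s=0$. Conversely, any $L^2$-solution of $\tilde D_R s=0$, viewed in $L^2(M,W)$, is fibrewise in $\ker D_1$, hence annihilated by $\tilde D_1$, and by $\tilde D_0$, hence by $\tilde D_0+\tilde D_1$. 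The embedding \eqref{embedding} is isometric (the metric on $\tilde R$ is the fibrewise $L^2$-metric) and grading-preserving (since $\tilde R\subset\tilde W_1$ is a $\Z/2$-graded subbundle), so the two $L^2$-kernels are isomorphic as $\Z/2$-graded spaces, and taking super-dimensions gives $\ind(\tilde D_0+\tilde D_1)=\ind\tilde D_R$.

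The step I expect to be the main obstacle is the separation $\tilde D_0 s=\tilde D_1 s=0$ from $(\tilde D_0+\tilde D_1)s=0$: one cannot directly apply Lemma~\ref{partial integral 1} to $\tilde D_0$ and $\tilde D_1$ individually, since $\tilde D_0^2 s$ and $\tilde D_1^2 s$ are not a priori $L^2$-bounded (only their sum, which vanishes, is). The cut-off argument above circumvents this, but it must be arranged so that the error terms from $d\rho_{a,\epsilon}$ are controlled uniformly by the bound on the total symbol; everything else is bookkeeping, the local trivializations entering only through Lemma~\ref{anti-commute:section3} and the fact that $\tilde D_0$ preserves $W_0\otimes\tilde R$.
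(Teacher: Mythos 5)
Your proof is correct and follows essentially the same route as the paper: both identify $E_0(\tilde D_0+\tilde D_1)$ with $E_0(\tilde D_R)$ through the embedding \eqref{embedding}, using Lemma~\ref{anti-commute:section3} to split $(\tilde D_0+\tilde D_1)^2=\tilde D_0^2+\tilde D_1^2$ and deduce $\tilde D_0 s=\tilde D_1 s=0$ for $s\in E_0(\tilde D_0+\tilde D_1)$. The only difference is in that step: the paper writes $0=\int_M(\tilde D_0^2 s+\tilde D_1^2 s,s)=\|\tilde D_0 s\|^2+\|\tilde D_1 s\|^2$ by invoking Lemma~\ref{partial integral 1} directly, whereas you rerun its cut-off argument with $\rho_{a,\epsilon}$ to avoid needing $\tilde D_0^2 s$ and $\tilde D_1^2 s$ to be individually $L^2$-bounded --- a more careful justification of the same step rather than a different method.
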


\begin{proof}
We show that the embedding (\ref{embedding}) gives
the isomorphism $E_0(\tilde{D}_{R}) \cong E_0(\tilde D_0+\tilde D_1)$.
If $s$ is in the image of $E_0(D_{R})$, then the construction of $D_{R}$
implies that $s$ is obviously in  $E_0(\tilde D_0+\tilde D_1)$.


If $s$ is an element of
$E_0(\tilde D_0+\tilde D_1)$ we have
\[
0=\int_M ((\tilde D_0+\tilde D_1)^2 s, s)
=\int_M (\tilde D_0^2 s +\tilde D_1^2 s, s)
=||\tilde D_0 s||^2+||\tilde D_1 s||^2,
\]
i.e.,
$\tilde D_0 s=\tilde D_1 s=0$. 
Note that, since $\tilde D_0$ and $\tilde{D}_1$ are essentially self-adjoint, partial integration formula is available here.
In particular $\tilde D_1 s=0$
implies that $s$ is in the image of the embedding (\ref{embedding}).
Moreover $\tilde D_0 s=0$ implies $s$ is in the image of
$E_0(D_{R})$. \
\end{proof}
\section{Perturbation and local index}\label{new}
In this section, using a perturbation by certain locally defined operators we construct an operator which satisfies Assumption~\ref{assumption for operator} in Section~\ref{An index theory for complete Riemannian manifolds}. 

\subsection{Setting}\label{Setting}
Let $(M,W,D)$ be the data satisfying the following properties.
\begin{enumerate}
\item  $M$ is a Riemannian manifold.
\item  $W$ is a $\Z/2$-graded vector bundle over $M$ with hermitian metric.
\item $D:\Gamma(W) \to \Gamma(W)$ is a formally self-adjoint differential operator of degree odd and of first order with finite propagation speed, i.e.,  
the $L^\infty$-norm of  $|\sigma(D)|$ is bounded.
\end{enumerate}
Suppose we have a collection of data $(V_\infty, \rho_\infty, \epsilon_\infty)$ and  $\{V_\alpha , D_\alpha,  C_\alpha, \delta_\alpha,  \rho_\alpha, \epsilon_\alpha \}_{\alpha \in A}$ with the following properties.
\begin{enumerate}
\item
$A$ is a finite set. We write $\tilde{A}=A \coprod \{\infty\}$
\item
$\{ V_\alpha\}_{\alpha \in \tilde{A}}$ is an open covering of $M$.
We write $V_{\alpha \beta}$ for
$V_\alpha \cap V_\beta$.
\item  $\{\delta_\alpha\}_{\alpha \in A}$ and $\{C_\alpha\}_{\alpha\in A}$ are families of positive real numbers. For $\alpha \in A$,
$$D_\alpha: \Gamma(W|_{V_\alpha}) \to \Gamma(W|_{V_\alpha})$$ is a formally self-adjoint differential  operator of degree odd and of first order with finite propagation speed satisfying the following properties.
(Formally we put $D_\infty=0$.) 
\begin{enumerate}
\item
For $\alpha \in A \,(\alpha \neq \infty)$ we have
$$
\int_{V_\alpha}(D_\alpha^2s,s) \geq \delta_\alpha \int_{V_\alpha}\abs{s}^2
$$
for any smooth compactly supported section $s\in \Gamma(W|_{V_\alpha})$.
\item For $\alpha,\beta \in A$ we have
$$
\int_{V_{\alpha\beta}}(\{D_\alpha, D_\beta\}s,s)  \geq 0
$$ 
for any smooth compactly supported section $s\in \Gamma(W|_{V_{\alpha\beta}})$, where $\{D_\alpha, D_\beta \}=D_\alpha  D_\beta + D_\beta D_\alpha$.
\item For $\alpha \in A$, we have
$$
\int_{V_\alpha}(\{D,  D_\alpha\}s,s) \leq C_\alpha \int_{V_\alpha}(D_\alpha^2s,s)
$$
for any smooth compactly supported section $s\in \Gamma(W|_{V_\alpha})$, where $\{D,  D_\alpha\}=D D_\alpha + D_\alpha D$.
\end{enumerate}
%
\item
$\{\rho_\alpha\}_{\alpha \in \tilde{A}}$ 
 is a family of smooth bounded functions on $M$ 
and $\{\epsilon_\alpha \}_{\alpha \in \tilde{A}}$ is a family of positive real numbers 
 satisfying the following properties.
\begin{enumerate}
\item $\supp \rho_\infty$ is compact.
\item For $\alpha \in \tilde{A}$ 
$$\supp \rho_\alpha \subset V_\alpha$$ 
\item
$$M=\bigcup_{\alpha \in \tilde{A}} \rho_\alpha^{-1}((\epsilon_\alpha,\infty))$$.
\item For $\alpha,\beta \in A$ we have
$$[D_\alpha, \rho_\beta]=0,
$$  
where 
$[D_\alpha, \rho_\beta]=D_\alpha  \rho_\beta - \rho_\beta D_\alpha$.
\item 
For $\alpha \in \tilde{A}$ 
\[
\rho_\alpha \in L^\infty_2,
\]
i.e., $\rho_\alpha, d\rho_\alpha, \nabla d\rho_\alpha \in L^\infty$, where $\nabla$ is the Levi-Civita connection with respect to the Riemannian metric. In particular, 
$$[D, \rho_\alpha],\ [D_\beta ,[D,\rho_\alpha]],\ [D, [D,\rho_\alpha]]\in L^\infty.$$
\end{enumerate}
\item
For each $x\in M$ and $\xi \in T^*_xM\setminus \{0\}$ 
$$\left(\sigma(D)+\sum_{\alpha\in A,\ x\in V_\alpha}t_\alpha\sigma(D_\alpha)\right)(\xi)$$ is isomorphic 
if $t_\alpha\ge 0$. 
In particular, for $t\ge 0$ the operator $D_t:\Gamma(W) \to \Gamma(W)$ defined by
$$
D_t=D+ t \sum_{\alpha \in A}\rho_\alpha D_\alpha \rho_\alpha
$$
is elliptic.
\end{enumerate}
\begin{lemma}\label{admissible p-o-u2}
There exists a partition of unity 
$\{\chi_{\alpha}^2\}_{\alpha\in \tilde A}$ subordinate to $\{V_\alpha\}_{\alpha\in \tilde A}$ such that $[D,\chi_\alpha]$ is bounded and we have
$$
\supp \chi_\alpha \subset \rho_\alpha^{-1}([\epsilon_\alpha ,\infty))
$$
for $\alpha \in \tilde{A}$, 
and 
$$
[D_\alpha,\chi_\beta]=0
$$
for $\alpha,\beta \in A$. 
\end{lemma}
\begin{proof}
We first take and fix a non-decreasing function $\varphi_\alpha:\R_{\ge 0}\to \R_{\ge 0}$ such that $\varphi_\alpha(r)=0$ if $r\in [0,\epsilon_\alpha]$ and $\varphi_\alpha(r)>0$ otherwise. 
We define $\hat\rho_{\alpha}:M\to \R$ by the composition 
$\hat\rho_{\alpha}:=\varphi_\alpha\circ\rho_{\alpha}$. 
Then, by definition,  we have 
${\rm supp}(\hat\rho_{\alpha}) \subset \rho_\alpha^{-1}([\epsilon_\alpha, \infty))$
and 
$$
M=\cup_{\alpha \in \tilde{A}} \hat{\rho}_\alpha^{-1}((0,\infty)).
$$
We obtain  the required  partition of unity 
$\{\chi_{\alpha}^2\}_{\alpha\in\tilde A}$ by putting
$$
\chi_\alpha=\hat{\rho}_\alpha \left(\sum_{\alpha \in \tilde{A}} (\hat{\rho}_\alpha)^2\right)^{-1/2}.
$$
\
\end{proof}

\subsection{An estimate for the perturbed operator}
Let $(M,W,D)$, $(V_\infty, \rho_\infty, \epsilon_\infty)$, and
$
\{V_\alpha , D_\alpha,  C_\alpha, \delta_\alpha, \rho_\alpha, \epsilon_\alpha \}_{\alpha \in A}$ be as above. 
In this subsection we assume that the Riemannian metric on $M$ is complete. 
We put 
$$
V:=\bigcup_{\alpha\in A}V_{\alpha} 
$$and 
$$
D_{\alpha}':=\rho_{\alpha}D_{\alpha}\rho_{\alpha}.
$$ for each $\alpha\in A$. 
For a positive real number $t$, 
we defined the operator acting on $\Gamma(W)$ by 
$$
D_{t}:=D+t\sum_{\alpha\in A} D_{\alpha}'.
$$
One of our assumption is that $D_t$ is elliptic for $t\gg 1$. 
The following is a main theorem in this subsection.

\begin{theorem}\label{vanishing for closed case}
Suppose that the Riemannian metric on $M$ is complete. 
For any $t \gg 1$ we have 
$$
\|s\|^2_{V}  \leq  \|D_ts\|^2_{V}  
$$
for any smooth compactly-supported section $s$ of $W$
with support contained in $V$.
\end{theorem}


Let $\{\chi_{\alpha}^2\}_{\alpha\in \tilde A}$ 
be the partition of unity 
constructed in Lemma~\ref{admissible p-o-u2}
and put $K_{\alpha}:=supp~\chi_{\alpha}$. 
\begin{lemma}\label{lemma-local}
There exists a bounded operator $Z$ which does not contain 
any differential terms and satisfying 
$$
D_t^2=\sum_{\alpha}\chi_{\alpha}D_t^2\chi_{\alpha}+Z. 
$$
Moreover $Z$ does not depend on $t$.  
\end{lemma}

\begin{proof}
Suppose  $\chi$ is a smooth function satisfying 
$[D_{\alpha}, \chi]=0$ for every $\alpha \in A$.  
Then we have $[D_t,\chi]=[D,\chi]$. 
Using this equality and the fact $[D,\chi]$ 
does not contain any differential operators 
we have 
\begin{eqnarray*}
[[D_t^2,\chi],\chi] &=& [ (D_t[D_t,\chi]+[D_t,\chi]D_t),\chi] \\
 &=& [ (D_t[D,\chi]+[D,\chi]D_t),\chi] \\
 &=& [D_t,\chi][D,\chi]+[D,\chi][D_t,\chi] \\
&=& 2[D,\chi]^2. 
\end{eqnarray*}
Put $\chi:=\chi_{\alpha}$ and take summation for all $\alpha$ we have
$$
2D_t^2 - 2\sum_\alpha \chi_\alpha D_t^2 \chi_\alpha
= \sum_\alpha [[D_t^2,\chi],\chi]  
= 2 \sum_\alpha [D,\chi_\alpha]^2. 
$$Then $Z:=\sum_\alpha [D,\chi_\alpha]^2$ 
is the required operator of order $0$. \
\end{proof}

\begin{proposition}\label{KeyProp-local}
If $s_{\alpha}$ is an $L^2$-bounded section of $W$ such that 
$D_t s_{\alpha}$
is also $L^2$-bounded section 
and ${\mathrm supp}~s_{\alpha}\subset K_{\alpha}$, 
then we have the inequality  
$$
\int_M |D_t s_{\alpha}|^2  \geq 
\left|\int_M (Zs_{\alpha},s_{\alpha})\right| +\int_M |s_{\alpha}|^2 
$$for all $t\gg 1$. 
\end{proposition}

We use the above proposition and lemmas 
to show Theorem~\ref{vanishing for closed case} as follows. 

\begin{proof}[Proof of Theorem~\ref{vanishing for closed case} assuming Proposition~\ref{KeyProp-local}]
We take $t\gg 1$ so that the inequality in 
Proposition~\ref{KeyProp-local} holds. 
When $s$ is a smooth compactly-supported section of $W$ 
with support contained in $V$.
We have 
\[
D_ts_\alpha=[D,\chi_\alpha]s,\ D_t^2s_\alpha=[D_t,[D,\chi_\alpha]]s\in L^2,
\]
where $s_\alpha:=\chi_\alpha s$. 
We obtain the required inequality as follows; 
\begin{eqnarray*}
\|D_ts\|_V^2&=&\int_M(D_t^2s,s) \\
&=&\sum_{\alpha}\int_M(\chi_{\alpha}D_t^2\chi_{\alpha}s,s)+
\int_M(Zs,s) \qquad (\text{Lemma~\ref{lemma-local}})\\ 
&=&\sum_{\alpha}\left(\int_M|D_ts_{\alpha}|^2+
\int_M(Zs_{\alpha},s_{\alpha})\right) \\
&\geq&\sum_{\alpha}\left(\left| \int_M(Zs_{\alpha},s_{\alpha}) \right| + 
\int_{M}|s_{\alpha}|^2+\int_M(Zs_{\alpha},s_{\alpha})\right) 
\qquad (\text{Proposition~\ref{KeyProp-local}})\\  
&\geq&\sum_{\alpha}\int_{M}|s_{\alpha}|^2=\int_M|s|^2=\|s\|_V^2.  
\end{eqnarray*}\
\end{proof}

\subsubsection{Proof of Proposition~\ref{KeyProp-local}}
Since $D_t$ has finite propagation speed, $D_t$ is essentially self-adjoint. It implies that it suffices to show the inequality when $s_\alpha$ is smooth and compactly supported, which we assume in the following argument.

For each fixed $\alpha\in A$, we can write $D_t$ as
$$
D_t= D_{\neq\alpha}+tD_{\alpha}'
$$ on $V_{\alpha}$, where we put 
$$
D_{\neq\alpha}:=D+t\sum_{\beta\neq\alpha}D'_{\beta}. $$

Using these notations together with 
$Q_\alpha:=D D'_\alpha +  D'_\alpha D$ and 
$Q_{\beta\alpha}:=D'_\beta D'_\alpha + D'_\alpha D'_\beta$, 
we have 
$$
D_t^2=D_{\neq\alpha}^2 + 
tQ_\alpha+t^2\sum_{\beta\neq\alpha} Q_{\beta\alpha}
+t^2D_{\alpha}'^2, 
$$and since 
$\int_M(D_{\neq\alpha}^2s_{\alpha},s_{\alpha})$ and 
$\int_M(Q_{\beta\alpha}s_{\alpha},s_{\alpha})$ are non-negative, 
we also have 
$$
\int_M |D_t s_{\alpha}|^2 \geq t^2\int_M |D_{\alpha}' s_{\alpha}|^2 \\
-t \left| \int_M (Q_\alpha s_{\alpha},s_{\alpha})\right|. 
$$
Since $\rho_\alpha \ge \epsilon_\alpha$ on $\supp s_\alpha$ and 
$\{D,D_{\alpha}\}\leq C_{\alpha}D_{\alpha}$ we have the estimates
$$
\epsilon_\alpha^4 \int_M |
D_\alpha s_\alpha|^2 \leq \int_M |D_\alpha' s_\alpha |^2, \quad 
\left|\int_M (Q_\alpha s_{\alpha}, s_{\alpha})\right|
 \leq C'_{\alpha}\int_M |D_{\alpha}' s_{\alpha}|^2 . 
$$
for $C'_{\alpha}:=C_{\alpha}/\epsilon_{\alpha}^4$. 
Combining these inequalities we have 
$$
\int_M |D_t s_{\alpha}|^2 
\ge  (t^2-C'_{\alpha}t) \int_M |D_{\alpha}' s_{\alpha}|^2. 
$$
On the other hand 
 ${\mathrm supp}~s_{\alpha}\subset K_{\alpha}$ 
and $\delta_{\alpha}\leq D_{\alpha}^2$ imply  that 
by putting $C''_{\alpha}:=(1+\max Z)/\epsilon_{\alpha}^4\delta_{\alpha}$ 
we have 
$$
 \left|\int_M (Zs_{\alpha},s_{\alpha})\right| +\int_M |s_{\alpha}|^2 
 \leq C''_{\alpha}\int_M |D_{\alpha}' s_{\alpha}|^2,  
$$and hence 
if we take $t\gg 1$ so that 
$t^2-C'_{\alpha}t\ge C''_{\alpha}$, then we have 
$$ 
\int_M |D_t s_{\alpha}|^2 \geq 
\left|\int_M (Zs_{\alpha},s_{\alpha})\right| +\int_M |s_{\alpha}|^2.  
$$
Since $A$ is a finite set we complete the proof. 
\

\subsection{Definition of the local index}
In this subsection we give the definition of the local index 
for the given data $(M,W,D)$, $(V_\infty, \rho_\infty, \epsilon_\infty)$, and
$
\{V_\alpha , D_\alpha,  C_\alpha, \delta_\alpha, \rho_\alpha, \epsilon_\alpha \}_{\alpha \in A}$ 
under the assumption that the Riemannian metric on $M$ is complete. 
As in the previous Subsection 
we put $D_t:=D+t\sum_{\alpha}\rho_{\alpha}D_{\alpha}\rho_{\alpha}$. 
Since we assume that $D$ and $\{D_{\alpha}\}_{\alpha\in A}$ 
have finite propergation speed, we have the following.

\begin{lemma}\label{Assumption in FFY}
If $t$ is large enough so that the inequality in 
Proposition~\ref{KeyProp-local} holds, then 
$D_t$ satisfies the Assumption~\ref{assumption for operator}. 
In particular if $M=V$, then the kernel of $D_t$ is trivial. 
\end{lemma}
By Theorem~\ref{spectrum} and Theorem~\ref{deformation invariance} 
we have the following.  
\begin{proposition}\label{mainprposition}
If $t$ is large enough so that the inequality in 
Proposition~\ref{KeyProp-local} holds, 
then the space of $L^2$-solutions of $D_ts=0$ 
is finite dimensional and its super-dimension is 
independent for $t\gg 1$ and any other 
continuous deformations of data. 
\end{proposition}
\begin{definition}\label{def. of abstract local index}
We define the local index $\ind(M,V,W)$ 
as the index of $D_t$ in the sense of Section~3. 
\end{definition}

In the case of cylindrical end we have the following sum formula of local indices by 
Proposition~\ref{gluing prop}. 

\begin{lemma}\label{sum formula} 
For $i=1,2$ 
let $M_i$ be manifolds with cylindrical ends $V_i=N_i\times \R_{>0}$ and 
$N^0_i$ be connected components of $N^0_i$.  
Suppose that there is an isometry $\phi:N_1^0 \to N_2^0$,
and for some $R>0$ the map $\phi:N_1^0 \times (0, R) \to N_2^0 \times (0,R)$
given by $(x, r) \mapsto (\phi (x), R-r)$ induces 
the isomorphism between the acyclic compatible systems on them. 
Then we can glue $M_1\setminus (N_1^0 \times [R,\infty))$ 
and $M_2 \setminus (N_2^0 \times [R,\infty))$ 
to obtain a new manifold $\hat{M}$ 
with cylindrical end $\hat{V}=\hat{N} \times (0,\infty)$ for
$\hat{N}=(N_1\setminus N_1^0)\cup (N_2 \setminus N_2^0)$, 
and we also have a Clifford module bundle $\hat W$ 
obtained by gluing $W$ and $W'$ on 
$N_1^0 \times (0, R) \cong N_2^0 \times (0,R)$. 
Then we have 
$$
\ind(\hat{M},\hat{V},\hat W)=\ind(M_1,V_1,W_1)+\ind(M_2,V_2,W_2). 
$$
\end{lemma}

\section{
The case of Hamiltonian torus actions}\label{sufficient condition}
Let $G$ be a compact torus, $(M,\omega)$ a closed Hamiltonian $G$-manifold, and $(L,\nabla)\to (M,\omega)$ a $G$-equivariant prequantizing line bundle, i.e., $L$ is a Hermitian line bundle over $M$ with a Hermitian connection $\nabla$ whose curvature form is equal to $-2\pi\sqrt{-1}\omega$, and all these data are $G$-equivariant. Choose a $G$-invariant compatible almost complex structure $J$ on $M$. We denote by $g_J$ the $G$-invariant Riemannian metric associated with $J$. Let $TM_\C$ be the tangent bundle regarded as a complex vector bundle with the almost complex structure $J$, and we set $W:=\wedge^\bullet_\C TM_{\C}\otimes L$. $W$ is equipped with a structure of a $\Z/2$-graded Clifford module bundle. See Subsection~\ref{DH} for more details. Let $D\colon \Gamma (W)\to \Gamma (W)$ be a Dirac-type operator. Then, the main theorem of this section is as follows. 
\begin{theorem}\label{sufficient condition theorem}
Let $V$ be an $G$-invariant open set of $M$ which satisfies the following condition: for each $x\in V$ there exists an open neighborhood $V_x$ of $x$ such that for all $y\in V_x$ the restriction of $L$ on $G_x^{\perp}$-orbit $G_x^{\perp}\cdot y$ has no nontrivial parallel sections. Here $G_x^{\perp}$ is the orthogonal complement of the stabilizer of $x$, see the proof of Proposition~\ref{rem good open covering} for more details. Then, for the above data $M$, $W$, and $D$, there exist data $\{V_\infty ,\rho_\infty ,\epsilon_\infty\}$ and $\{V_H , D_H,  C_H, \delta_H,  \rho_H, \epsilon_H \}_{H \in A}$ which satisfy the conditions in Section~\ref{new}. 
\end{theorem}

We will show Theorem~\ref{sufficient condition theorem} by two steps. As a first step, we construct $\{V_H\}_{H\in A}$ for a $G$-action with finitely many orbit types in Subsection~\ref{VH}. As a second step, in Subsection~\ref{DH} 
for a closed Hamiltonian $G$-space equipped with $G$-equivariant prequantizing line bundle we construct the other data and show that they satisfy the conditions  in Section~\ref{new}. 
\subsection{A construction of $\{ V_H\}_{H\in A}$ for torus actions}\label{VH}
Let $X$ be a manifold equipped with an action of a compact torus $G$. Let $A$ be the set of all the subgroups of $G$ which appear as  stabilizers
\[
G_x:=\{ g\in G\,|\, g x=x\}
\]
at some points $x\in X$. Note that $A$ has a partial order with respect to the inclusion. 
The following proposition is well-known. 
\begin{proposition}[\cite{Kawakubo}]\label{finiteness_of_A}
If $X$ is compact, then $A$ is a finite set.
\end{proposition}
\begin{lemma}[Existence of a good open covering]\label{good open covering}
Suppose that $A$ is a finite set. Then, there exists an open covering $\{V_H \}_{H \in A}$ of $X$ parameterized by $A$ satisfying the following properties. 
\begin{enumerate}
\item Each $V_H$ is $G$-invariant.
\item For each $x \in V_H$ we have $G_x \subset H$.
\item If $V_H \cap V_{H'}\neq \emptyset$, then we have $H \subset H'$ or $H \supset H'$.
\end{enumerate}
Moreover, if for each $x\in X$ there is an open neighborhood $V_x$ of $x$ such that $V_{gx}=gV_x$ for all $g\in G$, then $\{V_H\}_{H\in A}$ satisfies $V_H\subset \bigcup_{G_x=H}V_x$. 
\end{lemma}
We give a proof of Lemma~\ref{good open covering} in Appendix~\ref{proof of good open covering}. The following technical lemma will be used in Proposition~\ref{aprioriestimate2}. 
\begin{lemma}\label{G-invariant refinement of good open covering}
Let $\{V_H \}_{H \in A}$ be the open covering of $X$ as in Lemma~\ref{good open covering}. Then there exists a refinement $\{ V'_H\}_{H\in A}$ of $\{V_H \}_{H \in A}$ such that $\{ V'_H\}_{H\in A}$  satisfies the conditions in Lemma~\ref{good open covering} and $\overline{V'_H}\subset V_H$ for each $H\in A$. 
\end{lemma}
\begin{proof}
Since any $G$-invariant refinement $\{ V'_H\}_{H\in A}$ of $\{V_H \}_{H \in A}$  automatically satisfies the conditions in Lemma~\ref{good open covering}, it is sufficient to prove that there exists a $G$-invariant refinement $\{ V'_H\}_{H\in A}$ of $\{V_H\}_{H\in A}$ such that $\overline{V'_H}\subset V_H$ for each $H\in A$. 

Let $\pi\colon X\to X/G$ be the quotient map. We set $U_H:=\pi(V_H)$ for each $H\in A$. 
We take a finite open covering $\{ V''_H\}_{H\in A}$ of $X$ which satisfies $\overline{V''_H}\subset V_H$ for each $H\in A$. 
We set $U''_H:=\pi(V''_H)$ for each $H\in A$. Then since $\pi$ is a closed map it is easy to see that $\{ U''_H\}_{H\in A}$ is an open covering of $X/G$ which satisfies $\overline{ U''_H}\subset U_H$ for each $H\in A$. 
We set $V'_H:=\pi^{-1}(U''_H)$ for each $H\in A$. 
One can show that $\{V'_H\}_{H\in A}$ is the required open covering of $X$. 
This completes the proof. 
\
\end{proof}

By using Lemma~\ref{good open covering} we show the following proposition. 
\begin{proposition}\label{rem good open covering}
Suppose that $A$ is a finite set. Then, there exists a collection of data $\{V_H, G_H, \iota_{H H'} \ | \ H, H'\in A\}$ satisfying the following properties. 
\begin{enumerate}
\item $\{V_H\}_{H\in A}$ is the open covering of $X$ obtained in Lemma~\ref{good open covering}. 
\item For each $H\in A$, $G_H$ is a compact torus which acts on $V_H$  whose isotropy subgroup $(G_H)_x$ at each $x\in V_H$ is a finite group.   
\item For each $H,H'\in A$ with $V_H\cap V_{H'}\neq \emptyset$ and $\dim G_{H}\leq \dim G_{H'}$, $\iota_{HH'}$ is an embedding of $G_{H}$ into $G_{H'}$ as a subgroup. We assume that $\iota_{HH}={\rm id}_{G_H}$.   
\item For each $H,H'\in A$, $x\in V_H\cap V_{H'}$ and $g\in G_{H}$ with $\dim G_{H}\leq \dim G_{H'}$, we have $\iota_{H H'}(g)x=gx$. 
\end{enumerate}
\end{proposition}
\begin{proof}
We endow $G$ with a rational flat Riemannian metric. Precisely speaking we take a Euclidean metric on the Lie algebra of $G$ such that the intersection of the integral lattice and the lattice generated by some orthonormal basis has rank $n$. We extend it to the whole $G$. 

For a subgroup $H$ of $G$ let $H^{\bot}$ be the orthogonal complement of $H$ defined as the image of the orthogonal complement of the Lie algebra of $H$ by the exponential map. Since the metric is rational $H^{\bot}$ is well-defined as a compact subgroup of $G$ and it has only finitely many intersection points $H \cap H^{\bot}$. 

Let $\{ V_H\}_{H\in A}$ be the open covering of $X$ obtained in Lemma~\ref{good open covering}. For each $H\in A$ we define $G_{H}$ to be $H^{\perp}$. By definition $G_H$ acts on $V_H$ with finite isotropic subgroup. If $V_{H}\cap V_{H'}\neq \emptyset$ for $H, H'\in A$, then we have $H \subset H'$ or $H \supset H'$ by Lemma~\ref{good open covering}. In case of $H \supset H'$ we have $G_{H}\subset G_{H'}$ by definition. Then we denote by $\iota_{H H'}$ is the natural embedding of $G_{H}$ into $G_{H'}$. In case of $H \subset H'$ we define $\iota_{H' H}$ similarly. Then $\{ V_H, G_H, \iota_{H H'} \ | \ H, H'\in A\}$ is the required data. \hfill\hfill\hfill\
\end{proof}

\subsection{Constructions of $\{V_\infty ,\rho_\infty ,\epsilon_\infty\}$ and $\{V_H , D_H,  C_H, \delta_H,  \rho_H, \epsilon_H \}_{H \in A}$ for Hamiltonian torus actions}\label{DH}
Let $(M,\omega)$ be a $2m$-dimensional closed symplectic manifold equipped with a Hamiltonian action of an $n$-dimensional compact torus $G$. In this case each orbit is an affine isotropic torus in $M$. Suppose that there is a $G$-equivariant prequantizing line bundle $(L,\nabla)$ on $(M,\omega)$. Since all orbits are isotropic the restriction of $(L,\nabla)$ on each orbit is a flat line bundle. 

Fix a $G$-invariant $\omega$-compatible almost complex structure $J$ on $M$. We denote by $g_J$ the $G$-invariant Riemannian metric associated with $J$. We remark that the metric $g_J$ restricted to each orbit is a flat affine metric because it is $G$-invariant.  

We define a Hermitian vector bundle $W$ on $M$ by 
\[
W:=\wedge^\bullet_\C TM_{\C}\otimes L, 
\]
where $TM_\C$ is the tangent bundle regarded as a complex vector bundle with an almost complex structure $J$. A Clifford module structure $c\colon Cl(TM)\to \End (W)$ is defined by 
\begin{equation}\label{Clifford}
c(u)(\varphi)=u\wedge \varphi-u\llcorner \varphi 
\end{equation}
for $u\in TM$, $\varphi\in W$, where $h$ is the Hermitian metric on $M$ defined by
\[
h_x(u,v)=g_x(u,v)+\sqrt{-1}g_x(u,Jv)
\]
for $x\in M$, $u,v\in T_xM$ and $\llcorner$ is the interior product with respect to $h$, namely, 
\[
v\llcorner (v_1\wedge v_2\wedge \cdots \wedge v_k):=\sum_{i=1}^k(-1)^{i-1}h(v_i,v)v_1\wedge \hat{v_i}\wedge \cdots \wedge v_k
\]
for $v_1,\ldots , v_k\in TM$. 

Let $V$ be a $G$-invariant open set of $M$. 
Since $M$ is compact $G$ acts on $V$ with finitely many orbit types. Hence we can apply Proposition~\ref{rem good open covering} to $X=V$. Let $\{V_H, G_H, \iota_{H H'} \ | \ H, H'\in A\}$ be the data for $V$ which is obtained by applying Proposition~\ref{rem good open covering} to $X=V$. We construct the data $\{D_H\}_{H\in A}$ as in Section~\ref{new} in this case. For $H\in A$, let $\pi_H\colon V_H\to V_H/G_H$ be the natural projection. By the construction of $V_H$, $\pi_H$ is a fiber bundle in the sense of orbifolds. We denote by $T[\pi_H]$ the tangent bundle along fibers of $\pi_H$. Let $(T[\pi_H]\oplus JT[\pi_H])^\bot$ the orthogonal complement of $T[\pi_H]\oplus JT[\pi_H]$ with respect to $g_J$. Since $(g_J,J)$ is $G$-invariant the $H^\bot$-action preserves $(T[\pi_H]\oplus JT[\pi_H])^\bot$ and $JT[\pi_H]$. Then we define 
\[
\begin{split}
E_H&:=(T[\pi_H]\oplus JT[\pi_H])^\bot /H^{\bot},\\
F_H&:=JT[\pi_H]/H^{\bot}.
\end{split}
\] 
It is obvious that $T\left(V_H/G_H\right)$ is identified with $E_H\oplus F_H$. 

Since $g_J$ is invariant under $J$, $J$ preserves $(T[\pi_H]\oplus JT[\pi_H])^\bot$. In particular $T[\pi_H]\oplus JT[\pi_H]$ and $(T[\pi_H]\oplus JT[\pi_H])^\bot$ have the structures of Hermitian vector bundles with respect to the restriction of $(g_J,J)$ to them. Moreover $(g_J,J)$ is $G$-invariant it descends to the Hermitian structure on $E_H$. Then, we define $W_{1,H}$ and $W_{2,H}$ by 
\[
\begin{split}
W_{1,H}&:=\wedge^\bullet_\C(T[\pi_H]\oplus JT[\pi_H])_\C\otimes L,\\
W_{2,H}&:=\wedge^\bullet_\C(E_H)_\C\otimes L, 
\end{split}
\]
and define the Clifford module structures 
\[
\begin{split}
&c_{1,H}\colon Cl(T[\pi_H]\oplus \pi_H^*F_H)\to \End\left( W_{1,H}\right) ,\\
&c_{2,H}\colon Cl\left( E_H\right)\to \End\left(W_{2,H}\right)
\end{split}
\]
by the same formula as in \eqref{Clifford}. By definition, $TV_H$ has a decomposition 
\begin{equation*}
TV_H=(T[\pi_H]\oplus \pi_H^*F_H)\oplus \pi_H^*E_H
\end{equation*}
as Hermitian vector bundles. With respect to this decomposition there are the following isomorphisms 
\begin{eqnarray*}
Cl(TM)|_{V_H}&\cong& Cl(T[\pi_H]\oplus \pi_H^*F_H)\otimes Cl(\pi_H^*E_H),\\
W|_{V_H}&\cong& W_{1,H}\otimes \pi_H^*W_{2,H}.
\end{eqnarray*}
Then by the direct calculation one can check $c=c_{1,H}\otimes c_{2,H}$ under the above identifications. 
\begin{remark}
Our convention of the tensor product of Clifford modules is as follows. Let $A=A^0\oplus A^1$ and $B=B^0\oplus B^1$ be two $\Z/2$-graded algebras. Then we define a structure of a $\Z/2$-graded algebra on the tensor product $A\otimes B$ as follows. The $\Z/2$-grading is defined in the usual way. 
The multiplicative structure is defined by 
\[
(a\otimes b)\cdot (a'\otimes b')=(-1)^{\deg b\deg a'}(aa')\otimes (bb'),
\]
where $a,a'\in A^0\cup A^1$ and $b,b'\in B^0\cup B^1$. Now let $R_A$ and $R_B$ be $\Z/2$-graded $A$ and $B$ modules respectively. Then we define a structure of a $\Z/2$-graded $A\otimes B$-module on the tensor product $R_A\otimes R_B$ by the following formula 
\[
(a\otimes b)\cdot(r_A\otimes r_B)=(-1)^{\deg b\deg r_A}(ar_A\otimes br_B), 
\]
where $a\in A^0\cup A^1$, $b\in B^0\cup B^1$, $r_A\in R_A^0\cup R_A^1$ and $r_B\in R_B^0\cup R_B^1$. 
\end{remark}

Now we construct $\{D_H\}$ as follows.  Let $\nabla^{T[\pi_H]}\colon \Gamma (TV_H)\to \Gamma (T^*[\pi_H]\otimes TV_H)$ be the the family of Levi-Civita connections along fibers of $\pi_H$, namely,  
\[
\nabla^{T[\pi_H]}=\iota_H^*\otimes q_H\circ \nabla^{TM}\circ q_H ,
\]
where $\iota_H \colon T[\pi_H]\to TV_H$ is the natural inclusion, $\nabla^{TM}$ is the Levi-Civita connection on $TM$ with respect to $g_J$, and $q_H\colon TV_H\to TV_H$ is the orthogonal projection to $T[\pi_H]$ with respect to $g_J$. $\nabla^{T[\pi_H]}$ induces the family of Hermitian connections on $\wedge_\C^\bullet TM_\C|_{V_H}$ along fibers of $\pi_H$, which is denoted by $\nabla^{\wedge_\C^\bullet TM_\C|_{V_H}}$. We define the family of Hermitian connections $\nabla^H$ on $W|_{V_H}$ along fibers of $\pi_H$ by
\[
\nabla^H:=\nabla^{\wedge_\C^\bullet TM_\C|_{V_H}}\otimes\id+\id\otimes\left( \iota_H^*\otimes \id \circ\nabla^L\right) \colon \Gamma (W|_{V_H})\to \Gamma (T^*[\pi_H]\otimes W|_{V_H}). 
\]
Then we define $D_H \colon \Gamma (W|_{V_H})\to \Gamma (W|_{V_H})$ by
\begin{equation}\label{D_H}
D_H:=c_{1,H}\circ p_H\circ \nabla^H, 
\end{equation}
where $p_H\colon T^*[\pi_H]\to T[\pi_H]$ is the isomorphism via $g_J$. By the construction it is clear that $\{D_H\}_{H\in A}$ has the following properties. 
\begin{proposition}\label{properties1}
Let $\{D_H\}_{H\in A}$ be the data defined by \eqref{D_H}. Then $\{D_H\}_{H\in A}$ satisfies the following properties:
\begin{enumerate}
\item $D_H\colon \Gamma (W|_{V_H})\to \Gamma (W|_{V_H})$ is a first order formally self-adjoint differential operator of degree-one.
\item $D_H$ contains only the derivatives along orbits of the $G_H$-action on $V_{H}$, i.e. $D_H$ commutes with multiplication of $G_H$-invariant smooth functions on $V_H$. 
\item The principal symbol $\sigma (D_H)$ of $D_H$ is given by $\sigma (D_H)=c\circ p_H\circ \iota_H^*\colon T^*V_H\to \End (W|_{V_H})$, where $\iota_H\colon T[\pi_H]\to TV_H$ is the natural inclusion, $p_H\colon T^*[\pi_H]\to T[\pi_H]$ is the isomorphism induced by the Riemannian metric and $c\colon T[\pi_H]\to \End (W|_{V_H})$ is the Clifford multiplication. 
\item For an orbit $\CO$ of the $G_H$-action on $V_H$ let $\tilde{u}\in \Gamma\left(T[\pi_H]^{\perp}|_{\CO}\right)^{G_H}$ be a $G_H$-equivariant section of $T[\pi_H]^{\perp}|_{\CO}$. 
$\tilde{u}$ acts on $W|_{\CO}$ by the Clifford multiplication $c(\tilde{u})$. Then $D_H$ and $c(\tilde{u})$ anti-commute each other, i.e.
\[
0=\{ D_H,c(\tilde{u}) \}:=D_H\circ c(\tilde{u})+c(\tilde{u})\circ D_H
\]
for all orbits $\CO$ and $\tilde{u}\in \Gamma\left(T[\pi_H]^{\perp}|_{\CO}\right)^{G_H}$. 
\end{enumerate}
\end{proposition}
The properties (1), (2), and (3) in Proposition~\ref{properties1} imply that $D_H$ is of Dirac-type when restricted to each fiber of $\pi_H$. 

We show that the above $\{D_H\}_{H\in A}$ satisfies the property (3)-(b) in Section~\ref{new}. 
\begin{proposition}\label{non_negativity}
Let $\{D_H\}_{H\in A}$ be the data constructed by \eqref{D_H}. If $V_H\cap V_{H'}\neq \emptyset$, then the anti-commutator $\{D_{H},D_{H'}\}$ is a non-negative operator over $V_H\cap V_{H'}$. 
\end{proposition}
It suffices to show the non-negativity of the anti-commutator along fibers. We consider the following model. 
\begin{itemize}
\item $E$ : Euclidean space 
\item $\Gamma$ : maximal lattice of $E$
\item $F:=E/\Gamma$ : flat torus 
\item $W\to F$ : $Cl(TF)$-module bundle 
\item $\nabla:\Gamma(W)\to \Gamma(TF\otimes W)$ : 
flat Clifford connection of $W$ 
\item $c:TF\otimes W\to W$ : Clifford action of $TF$ 
\item $A$ : finite set 
\item $\{E_{\alpha}\}_{\alpha\in A}$ : family of subspaces of $E$
\item $\{p_{\alpha}\}$ : family of orthogonal projections to $\{E_{\alpha}\}$ 
\item We assume $p_{\alpha}p_{\beta}=p_{\beta}p_{\alpha}$ for all 
$\alpha, \beta\in A$. 
\end{itemize}
Using the metric we have the identification $TF=T^*F=F\times E$. 
For a symmetric endomorphism $S : E\to E$
let $\hat S :F\times E \to F\times E$ be the induced bundle map on 
the (co)tangent bundle. 
We define a differential operator $D_S$ by the composition 
$$
D_S:=c\circ \hat S\circ\nabla : \Gamma(W)\to \Gamma(W). 
$$ 
Since $S$ is symmetric $D_S$ is a self-adjoint operator.  
%
Let $S_1$ and $S_2$ be symmetric endomorphisms which commute each other. 
By the direct computation using the orthonormal basis of $E$ consisting of 
simultaneously eigenvectors of $S_1$ and $S_2$ we have the following equality
$$
D_{S_1}\circ D_{S_2}+D_{S_2}\circ D_{S_1}=
2\nabla^*\circ\hat S_1\circ\hat S_2\circ\nabla, 
$$where 
$\nabla^*:\Gamma(TF\otimes W)\to \Gamma(W)$ 
is the adjoint operator of $\nabla$.  
%
%

In particular, by putting $S_1:=p_{\alpha}$ and $S_2:=p_{\beta}$ we have the following. 
\begin{lemma}\label{cor2}
$D_{p_{\alpha}}D_{p_{\beta}}+D_{p_{\beta}}D_{p_{\alpha}}=2D_{\alpha\beta}^2$, 
where $D_{\alpha\beta}$ is the self-adjoint operator 
$c\circ\hat p_{\alpha\beta}\circ\nabla$ 
defined by the projection $p_{\alpha\beta}$ to the 
intersection $E_{\alpha}\cap E_{\beta}$. 
\end{lemma}

\begin{proof}[Proof of Proposition~\ref{non_negativity}] 
By the third property of $\{V_H\}_{H\in A}$ in Proposition~\ref{rem good open covering} we have a family of tori at each point on $V$ which comes from a family of subspaces whose projections commute each other. Then the claim follows from Lemma~\ref{cor2}. \hfill \  
\end{proof}

If $V$ satisfies the assumption in Theorem~\ref{sufficient condition theorem} we show that $\{D_H\}_{H\in A}$ satisfies the condition~(3)-(a). 
\begin{proposition}\label{Hamiltonian acyclic}
Let $\{D_H\}_{H\in A}$ be as above. Suppose that for each $x\in V$ there exists an open neighborhood $V_x$ of $x$ such that for all $y\in V_x$ the restriction of $L$ on $G_x^{\perp}$-orbit $G_x^{\perp}\cdot y$ has no nontrivial parallel sections. Then, for each $H\in A$ and each orbit $\CO$ of the $G_H$-action on $V_H$ $D_H|_{\CO}$ has zero kernel. 
\end{proposition}

\begin{corollary}\label{aprioriestimate}
Under the assumption in Proposition~\ref{Hamiltonian acyclic}, for each orbit $\CO$ of the $G_H$-action on $V_H$ and arbitrary differential operator $Q$ of order at most $2$ along $\CO$, there exists a constant $C_{Q,\CO}$ such that the inequality 
$$
\left|\int_{\CO}(s_{\CO},Qs_{\CO})\right|\le C_{Q,\CO}\int_{\CO}\left|D_Hs_{\CO}\right|^2
$$
holds for all sections $s_{\CO}\in \Gamma (W|_{\CO})$. We can take $C_{Q,\CO}$ continuously with respect to $\CO$. Namely, $C_{Q,\CO}$ can be taken as a continuous function on $V_H/G_H$. 
\end{corollary}
%
%

Proposition~\ref{Hamiltonian acyclic} follows from the assumption and the following lemma.
\begin{lemma}\label{vanishing of cohomology}
Let $(E,\nabla^E)\to T$ be a flat Hermitian line bundle on an $n$-dimensional torus. If the degree zero cohomology $H^0(T;E)$ with local system $(E,\nabla^E)$ vanishes, then all cohomologies $H^\bullet (T;E)$ vanish. 
\end{lemma}
\begin{proof}
Fix a flat Riemannian metric on $T$. 
Let $(x_1,\cdots, x_n)$ be a coordinate on $T$ 
such that $\{\partial/\partial x_i\}$ 
forms a parallel orthonormal frame of the tangent bundle. 
Let $\nabla^{T}$ be the Levi-Civita connection of the flat metric. 
Then the induced connection 
$\nabla^W:=\nabla^{E}\otimes\id+\id\otimes\nabla^{T}$ 
on $W:=E\otimes\wedge^{\bullet}T^*T$ is a flat connection 
and $(W,\nabla^W)$ is a 
Dirac bundle in the sense of \cite{spingeom}. 
The general Bochner identity for Dirac bundles 
(Theorem~8.2 in \cite{spingeom}) implies that 
any harmonic section in $\Gamma(W)$ is parallel.  
If $\omega=\sum_{I}s_Idx_I\in \Gamma(W)$ is a harmonic section of degree $k$, 
where $I$ runs multi-indices $I=(i_1<\cdots<i_k)$ and 
$dx_I=dx_{i_1}\wedge\cdots\wedge dx_{i_k}$, then 
we have $0=\nabla^W\omega=\sum_I(\nabla^Es_I)dx_I$, 
and hence, $\nabla^Es_I=0$. 
Since $H^0(M;E)=0$ we have $s_I=0$ and $\omega=0$. \hfill \ 
\end{proof}
\begin{proof}[Proof of Proposition~\ref{Hamiltonian acyclic}]
By taking the union along $G$-orbits we may assume that each $V_x$ is $G$-invariant. Note that each $V_H$ is contained in $\cup_{G_x=H}V_x$. The restriction of $L$ on each $H^{\perp}$-orbit has no non-trivial parallel sections because of the property $V_H\subset\cup_{G_x=H}V_x$ and our assumption. It implies $\ker D_H=0$ by Lemma~\ref{vanishing of cohomology}. \hfill\
\end{proof}

\begin{remark}
In the proof of Proposition~\ref{properties1}, ~\ref{non_negativity}, and~\ref{Hamiltonian acyclic} the fact that the orbit of the $G_H$-action on $V_H$ is a flat torus is essentially used.  
\end{remark}


From the fourth property of $\{D_H\}_{H\in A}$ in Proposition~\ref{properties1} we can show that following lemma. 
\begin{lemma}\label{anti-commutator}
For each $H\in A$ the anti-commutator $\{D, D_H\}$ is a differential operator along orbits of the $G_H$-action on $V_H$. 
\end{lemma}
\begin{proof}
Recall that, for each $H\in A$ the principal symbol of horizontal direction $D^\text{hol}$ of $D$ with respect to $\pi_H$ anti-commutes not only with the symbol of $D_H$, but also with the whole operator $D_H$. The statement follows from this property. It is straightforward to check it using local description. 
Instead of giving the detail of the local calculation, however, 
we here give an alternative formal explanation for the above lemma. 
For $b \in V_H/G_H$ let ${\mathcal W}_b$ be
the sections of the restriction of $W$ on the fiber $\pi_H^{-1}(b)$.
Then ${\mathcal W}=\coprod {\mathcal W}_b$ is formally
an infinite dimensional vector bundle over $V_H/G_H$. 
We can regard $D_H$ as an endomorphism on ${\mathcal W}$.
Then $D_H$ is a order-zero differential operator
on ${\mathcal W}$ whose principal symbol is equal to 
$D_H$ itself.
Then, as a differential operator on ${\mathcal W}$, 
the anti-commutator 
$D_HD^\text{hol}+D^\text{hol}D_H$ is
an (at most) order-one operator whose principal symbol
is given by the anti-commutator between 
the Clifford action by $T\left(V_H/G_H\right)\cong \left(T[\pi_H]^{\perp}\right)^{G_H}$
and $D_H$. This principal symbol vanishes, which implies
that the anti-commutator is order-zero as a differential operator
on ${\mathcal W}$, i.e., it does not contain derivatives of
$V_H/G_H$-direction. \hfill \hfill \hfill \hfill \
\end{proof}

By applying Corollary~\ref{aprioriestimate} to $Q=\id$ and $Q=\{D, D_H\}$, we can show that $\{D_H\}_{H\in A}$ satisfies the condition (3)-(a) and (c) in Section~\ref{new}, respectively. 
\begin{proposition}\label{aprioriestimate2}
Under the assumption in Proposition~\ref{Hamiltonian acyclic}, for each $H\in A$ there exist positive constants $\delta_H$ and $C_H$ such that the inequalities 
\[
\delta_H\int_{\CO}\left| s_{\CO}\right|^2\le \int_{\CO}\left|D_Hs_{\CO}\right|^2,\quad 
\left|\int_{\CO}(s_{\CO},\{D, D_H\}s_{\CO})\right|\le C_H\int_{\CO}\left|D_Hs_{\CO}\right|^2
\]
hold for all sections $s_{\CO}\in \Gamma (W|_{\CO})$. 
\end{proposition}
\begin{proof}
We prove the first inequality. By applying Corollary~\ref{aprioriestimate} to $Q=\id$, for each orbit $\CO$ of the $G_H$-action on $V_H$ there exists a positive constant $C_{Q,\CO}$ depending continuously on $\CO$ such that the inequality in Corollary~\ref{aprioriestimate} holds for $Q=\id$. If necessary, by replacing $\{V_H\}_{H\in A}$ by its refinement as in Lemma~\ref{G-invariant refinement of good open covering}, we can assume that the inequality in Corollary~\ref{aprioriestimate} for $Q=\id$ still holds on a sufficiently small $G$-invariant neighborhood of $\overline{V_H}$. In particular, $C_{Q,\CO}$ takes the maximal value on the compact set $\overline{V_H}/G_H$ since $C_{Q,\CO}$ is a continuous function on $\overline{V_H}/G_H$. We denote by $C_H$ the maximal value of $C_{Q,\CO}$ on $\overline{V_H}/G_H$ and set $\delta_H:=\dfrac{1}{C_H}$. Then, $\delta_H$ is the required constant. The second inequality can be proved similarly. 
\
\end{proof}

Recall that $V$ is a $G$-invariant open subset of $M$ and $\{V_H\}_{H\in A}$ is the open covering of $V$ obtained in Lemma~\ref{good open covering}. We take and fix a $G$-invariant open neighborhood $V_\infty$ of $M\setminus V$. Set $\tilde{A}:=A\coprod \{\infty\}$. Then $\{V_H\}_{H\in \tilde{A}}$ is an open covering of $M$. Now we can show the following proposition. 

\begin{proposition}\label{partition of unity}
For the above data $M$, $V$, and $\{V_H\}_{H\in\tilde{A}}$ there exists a data $\{\rho_H,\epsilon_H\}_{H\in \tilde{A}}$ 
which satisfies the condition (4) in Section~\ref{new}. 
\end{proposition}
\begin{proof}
As we showed in the proof of Lemma~\ref{G-invariant refinement of good open covering} there exists a $G$-invariant open covering $\{V'_H\}_{H\in \tilde{A}}$ of $M$ such that $\overline{V'_H}\subset V_H$ for each $H\in \tilde{A}$. For such an open covering $\{V'_H\}_{H\in \tilde{A}}$ we can take a partition of unity $\{\phi_H\}_{H\in \tilde{A}}$ subordinate to $\{V_H\}_{H\in \tilde{A}}$ which satisfies $\phi_H>0$ on $\overline{V'_H}$ for each $H\in \tilde{A}$. 
For each $\phi_H$ we define $I(\phi_H)$ by 
\[
I(\phi_H)(x):=\int_G\phi_H(gx)dvol_G, 
\]
where $dvol_G$ is the Haar measure of $G$ with $\int_Gdvol_G=1$. Then we have a family of $G$-invariant functions $\{I(\phi_H)\}_{H\in \tilde{A}}$. Note that it is an another partition of unity subordinate to $\{V_H\}_{H\in \tilde{A}}$. We put $\rho_H:=I(\phi_H)/\sqrt{\sum_{H'}I(\phi_{H'})^2}$. We put $\epsilon_H:=\min_{x\in \overline{V'_H}}\rho_H(x)$. $\epsilon_H$ is a positive real number since $\phi_H>0$ on $\overline{V'_H}$ and $\overline{V'_H}$ is $G$-invariant. Then $\{\rho_{H},\epsilon_H\}_{H\in \tilde{A}}$ is the required data. \hfill\hfill\hfill\hfill\
\end{proof}

We show that $\{D_H\}$ and $\{\rho_H\}_{H\in A}$ satisfies the condition (5) in Section~\ref{new}. 
\begin{proposition}\label{elliptic}
For a family of non-negative numbers $\vec t=\{t_H\}_{H\in A}$ and the partition of unity $\{ \rho_H\}_{H\in A}$ obtained in Proposition~\ref{partition of unity} we define the operator $D_{\vec t}\colon \Gamma(W)\to \Gamma(W)$ by 
\[
D_{\vec t}:=D+\sum_{H\in A}t_H\rho_HD_H\rho_H. 
\]
Then, $D_{\vec t}$ is elliptic. 
\end{proposition}
\begin{proof}
With the identification of tangent bundles and cotangent bundles via the Riemannian metric, the principal symbol $\sigma_{\vec t}$ of $D_{\vec t}$ is written as 
\[
\sigma_{\vec t}=c\circ (\id_{TM}+\sum_{H\in A}t_H\rho_H p_H \rho_H), 
\]
where $p_H\colon TV_H\to T[\pi_H]$ is the orthogonal projection. We put 
\[
S:=\id_{TM}+\sum_{H\in A}t_H\rho_H p_H \rho_H .
\]
$S$ is a symmetric operator. In order to prove the proposition it is sufficient to show that for each $x\in M$ $S_x\colon T_xM\to T_xM$ is a positive operator. By definition, for $u\in T_xM$ we have 
\begin{equation*}
(S_xu,u)=\abs{u}^2+\sum_{H\in A}t_H\abs{\rho_H(x)p_H(u)}^2\ge 0,  
\end{equation*}
where we think $p_H (u)=0$ if $x\not\in V_H$.
Moreover, $(S_xu,u)=0$ if and only if $u=0$. This implies $S_x$ is positive. 
\end{proof}

\begin{proof}[Proof of Theorem~\ref{sufficient condition theorem}]
Theorem~\ref{sufficient condition theorem} follows from Proposition~\ref{rem good open covering}, Proposition~\ref{non_negativity}, Proposition~\ref{aprioriestimate2}, Proposition~\ref{partition of unity}, and Proposition~\ref{elliptic}.
\end{proof}


\section{Compatible fibration and acyclic compatible system}
\label{Compatible fibration and acyclic compatible system}

In this section we give a general framework which includes the  
construction in Section~\ref{sufficient condition} for Hamiltonian torus action . 
The framework does not relay on global torus action. 
For example it can be applied for locally toric Lagrangian fibration 
as in Section~\ref{four-dim ex}. 

\subsection{Compatible fibration}
Let $M$ be a smooth manifold. 
\begin{definition}\label{compatible fibration}
A {\it compatible fibration on $M$} is a collection of data 
$\{ \pi_{\alpha}\colon V_{\alpha }\to U_{\alpha}\mid \alpha\in A\}$ 
satisfying the following properties. 
\begin{enumerate}
\item $\{V_\alpha\}_{\alpha\in A}$ is an open covering of $M$. 
\item For each $\alpha \in A$, $U_\alpha$ is a manifold and $\pi_{\alpha}\colon V_{\alpha}\to U_{\alpha}$ is a fiber bundle whose fiber is a closed manifold. 
\item For each $\alpha$ and $\beta$, we have 
\[
V_{\alpha}\cap V_{\beta}=\pi_{\alpha}^{-1}(\pi_{\alpha}(V_{\alpha}\cap V_{\beta}))
=\pi_{\beta}^{-1}(\pi_{\beta}(V_{\alpha}\cap V_{\beta})). 
\]
\end{enumerate}
We often denote a compatible fibration on $M$ by $\{ \pi_\alpha\}_{\alpha \in A}$ for simplicity.  
\end{definition}


Let $\{ \pi_\alpha\}_{\alpha \in A}$ be a compatible fibration on $M$.

\begin{definition}\label{admissible subset}
A subset $C$ of $M$ is said to be {\it admissible} 
if $V_\alpha\cap C=\pi_\alpha^{-1}(\pi_\alpha(V_\alpha\cap C))$ for each $\alpha \in A$. 
\end{definition}
We can restrict a compatible fibration to an admissible open subset. 
\begin{proposition}
Let $C$ be a submanifold of $M$. Suppose $C$ is admissible. Then the data $\{ \pi_\alpha|_{C\cap V_\alpha}\colon C\cap V_\alpha \to \pi_\alpha(C\cap V_\alpha)\}_{\alpha \in A}$ is a compatible fibration on $C$. 
\end{proposition}


\begin{definition}\label{admissible function}
Let $f:M\to \R$ be a function. 
If 
$f$ is constant along fibers of $\pi_{\alpha}|_{V_{\alpha}}$ 
for all $\alpha\in A$, then we call $f$ an {\it admissible function}.
\end{definition}




The construction in Subsection~\ref{VH} (Proposition~\ref{rem good open covering}) is an example of 
compatible fibration, which is contained in a specific class of 
the compatible fibration. 
We give the precise definition of such a class. 

\begin{definition}
For $\alpha\in A$ and $x\in V_{\alpha}$, 
we define 
$A(\alpha;x)$ as follows. 
\[
A(\alpha;x):=\{\beta\in A \ | \ x\in V_{\alpha}\cap V_{\beta}, \ p_{\alpha\beta}^{\beta} : U_{\alpha\beta} \to \pi_{\beta}(V_{\alpha}\cap V_{\beta}) \mbox{ is a homeomorphism}\},
\]where $U_{\alpha\beta}$ is the push-out of $\pi_{\alpha}$ and $\pi_{\beta}$, i.e., 
$$
U_{\alpha\beta}=\{(b_{\alpha},b_{\beta}) \ | \ b_{\alpha}=\pi_{\alpha}(x), 
b_{\beta}=\pi_{\beta}(x) \ (x\in V_{\alpha}\cap V_{\beta})\} 
$$
and $p_{\alpha\beta}^{\beta} : U_{\alpha\beta} \to \pi_{\beta}(V_{\alpha}\cap V_{\beta}) $ is the natural map. 
\end{definition}

\begin{remark}\label{relation}
If $\beta\in A(\alpha, x)$ for $x\in V_{\alpha}\cap V_{\beta}$, then we have $\pi_{\beta}^{-1}\pi_{\beta}(x)\subset\pi_{\alpha}^{-1}\pi_{\alpha}(x)$. 
Conversely if $x\in V_{\alpha}\cap V_{\beta}$ satisfies 
$\pi_{\beta}^{-1}\pi_{\beta}(x)\subset\pi_{\alpha}^{-1}\pi_{\alpha}(x)$, 
then we have $\beta\in A(\alpha,x)$. 
\end{remark}

\begin{definition}[Good compatible fibration]\label{Good compatible fibration}
If a compatible fibration 
$\{\pi_{\alpha}:V_{\alpha}\to U_{\alpha}\}$ over $M$ 
satisfies the following condition  (4), then we call 
$\{\pi_{\alpha}:V_{\alpha}\to U_{\alpha}\}$ a {\it good compatible fibration}. 
\begin{itemize}
\item[(4)] If $V_{\alpha}\cap V_{\beta}\neq \emptyset$, then 
we have $\alpha\in A(\beta;x)$ or $\beta\in A(\alpha;x)$ 
for all $x\in V_{\alpha}\cap V_{\beta}$. 
\end{itemize} 
\end{definition}

Though good compatible fibrations have several nice properties, note that the product of two good compatible fibrations, which is defined in a natural way as in Section~\ref{product formula of local indices} is not a good compatible fibration in general. On the other hand 
the product of two compatible fibrations is also a compatible fibration.


Now we define an appropriate notion of Riemannian metric 
for a compatible fibration. 
For the projection $\pi$ of a fiber bundle over a manifold we denote by $T[\pi]$ the vector bundle consisting of tangent vectors along fibers of $\pi$. 
%

\begin{definition}\label{compatible metric}
A {\it compatible Riemannian metric of a compatible fibration} is a collection of Riemannian metrics of $M$ and $\{U_\alpha\}_{\alpha\in A}$ such that $\pi_\alpha\colon V_\alpha\to U_\alpha$ is a Riemannian submersion with respect to the Riemannian metrics for each $\alpha \in A$.  
\end{definition}

As we showed in 
Proposition~\ref{rem good open covering} any smooth manifold equipped with  
torus action has a structure of good compatible fibration. 
In this case any torus invariant metric 
gives a compatible Riemannian metric 

\subsection{Acyclic compatible system}
Let $\{ \pi_{\alpha}\}$ be a compatible fibration on $M$ with compatible Riemannian metric and $W\to M$ a Clifford module bundle. 
\begin{definition}[Compatible system of Dirac-type operators]\label{compatible acyclic system}
A {\it compatible system of Dirac-type operators} on $(\{ \pi_{\alpha}\}, W)$ 
is a data $\{ D_{\alpha}\}$ satisfying the following properties. 
\begin{enumerate}
\item $D_{\alpha}\colon \Gamma (W|_{V_{\alpha}})\to \Gamma (W|_{V_{\alpha}})$ is an order-one formally self-adjoint differential operator of degree-one.
\item $D_{\alpha}$ contains only the derivatives along fibers of $\pi_{\alpha}\colon V_{\alpha}\to U_{\alpha}$, i.e. $D_{\alpha}$ commutes with multiplication of the pull-back of smooth functions on $U_{\alpha}$. 
\item The principal symbol $\sigma (D_{\alpha})$ of $D_{\alpha}$ is given by $\sigma (D_{\alpha})=c\circ p_{\alpha}\circ \iota_{\alpha}^*\colon T^*V_{\alpha}\to \End (W|_{V_{\alpha}})$, where $\iota_{\alpha}\colon T[\pi_{\alpha}]\to TV_{\alpha}$ is the natural inclusion, $p_{\alpha}\colon T^*[\pi_{\alpha}]\to T[\pi_{\alpha}]$ is the isomorphism induced by the Riemannian metric and $c\colon T[\pi_{\alpha}]\to \End (W|_{V_{\alpha}})$ is the Clifford multiplication. 
\item For $b\in U_{\alpha}$ and $u\in T_bU_{\alpha}$, let $\tilde{u}\in \Gamma(TV_{\alpha}|_{\pi^{-1}_{\alpha}(b)})$ 
be the horizontal lift of $u$ with respect to the Riemannian metric of $M$ restricted to $V_\alpha$. 
$\tilde{u}$ acts on $W|_{\pi^{-1}_{\alpha}(b)}$ by the Clifford multiplication $c(\tilde{u})$. Then $D_{\alpha}$ and $c(\tilde{u})$ anti-commute each other, i.e.
\[
0=\{ D_{\alpha},c(\tilde{u}) \}:=
D_{\alpha}\circ c(\tilde{u})+c(\tilde{u})\circ D_{\alpha}
\]
for all $b\in U_{\alpha}$ and $u\in T_bU_{\alpha}$. 
\end{enumerate}
\end{definition}
The properties (1), (2), and (3) in Definition~\ref{compatible acyclic system}
imply that $D_{\alpha}$ is of Dirac-type when restricted to 
each fiber of $\pi_{\alpha}$. 
We call a compatible system of Dirac-type operators 
$\{D_{\alpha}\}$ a compatible system for short. 



\begin{definition}[Acyclic compatible system]\label{compatible strongly acyclic system}
A compatible system $\{D_{\alpha}\}$  is {\it acyclic} if 
it satisfies the following conditions. 
\begin{enumerate}
\item For each $\alpha\in A$ and $b\in U_\alpha$ $D_\alpha|_{\pi_\alpha^{-1}(b)}$ has zero kernel. 
\item If $V_{\alpha}\cap V_{\beta}\neq \emptyset$, then the anti-commutator 
$\{D_{\alpha},D_{\beta}\}$ is a non-negative operator over $V_{\alpha}\cap V_{\beta}$. 
\end{enumerate}
\end{definition}

\begin{example}
The data $\{D_H\}_{H\in A}$ constructed in Subsection~\ref{DH} 
is an acyclic compatible system.  See also Proposition~\ref{properties1}.  
\end{example}

Acyclicity of the compatible system implies the 
vanishing of the kernel of the weighted averages of $D_{\alpha}$'s.

\begin{lemma}\label{strongly -> acyclic}
If a compatible system  $\{D_{\alpha}\}$ is acyclic , then 
the operator 
$$
\sum_{\beta \in A(\alpha;x)}t_{\beta}D_{\beta}\colon 
\Gamma (W|_{\pi_{\alpha}^{-1}(\pi_{\alpha}(x))})\to 
\Gamma(W|_{\pi_{\alpha}^{-1}(\pi_{\alpha}(x))})
$$ has zero kernel
for all $\alpha\in A$, $x\in V_{\alpha}$ and 
any family of non-negative numbers 
$\{ t_{\beta}\}_{\beta \in A(\alpha;x)}$ satisfying 
$t_{\beta}>0$ for some $\beta$. 
Note that the above operator is well-defined 
as a differential operator on $\pi_{\alpha}^{-1}(\pi_{\alpha}(x))$ because of 
Remark~\ref{relation}.
\end{lemma}
\begin{proof}
If $\{D_{\alpha}\}$ is an acyclic compatible system, 
then we have 
$$ 
\left(D_{\alpha}+\sum_{\beta\in A(\alpha;x)}
\tau_{\beta}D_{\beta}\right)^2=
D_{\alpha}^2+\sum\tau_{\beta}\{D_{\alpha},D_{\beta}\}+
\left(\sum_{\beta}\tau_{\beta}D_{\beta}\right)^2\ge D_{\alpha}^2
$$for any family of non-negative numbers 
$\{\tau_{\beta}\}$. 
Suppose $\left(\sum_{\beta\in A(\alpha;x)}t_{\beta}D_{\beta}\right)s=0$ for 
$s\in \Gamma(W|_{\pi_{\alpha}^{-1}(\pi_{\alpha}(x))})$. 
Take $\alpha_0\in A(\alpha;x)$ so that $t_{\alpha_0}$ is not $0$. 
Then we have 
$$
\left(D_{\alpha_0}+\sum_{\beta\in A(\alpha;x)\setminus\{\alpha_0\}}
(t_{\beta}/t_{\alpha_0})D_{\beta}\right)s=0
$$and $s=0$ by the above inequality and the first condition 
in Definition~\ref{compatible strongly acyclic system}.  
\end{proof}

\begin{example}
Let $M$ be $\R\times S^1$ with the standard Riemannian metric and $(t,\theta)$ its standard coordinate. We introduce a compatible fibration on $M$ by 
\[
\begin{split}
\pi_{\alpha}\colon V_{\alpha}&:=(-\infty ,1)\times S^1\to U_{\alpha}:=(-\infty ,1)\\ 
\pi_{\beta}\colon V_{\beta}&:=(-1,\infty)\times S^1\to U_{\beta}:=(-1,\infty ).
\end{split}
\] 

Let $W$ be the trivial rank $2$ Hermitian vector bundle $M\times \C^2$ on $M$.
We denote
$$
c_0:=
\begin{pmatrix}
0 & 1 \\
-1 & 0
\end{pmatrix},\quad
c_1:=
\begin{pmatrix}
0 & \sqrt{-1} \\
\sqrt{-1} & 0
\end{pmatrix},\quad
\epsilon_0=
\begin{pmatrix}
1 & 0 \\
0 & -1
\end{pmatrix}
.
$$
We introduce the obvious ${\Z/2}$-grading structure on $W$ by the eigen-decomposition for $\epsilon_0$,
and define the Clifford multiplication of $Cl(TM)$ on $W$ by
\[
c(\partial_{\theta})=c_0,\qquad
c(\partial_t)=c_1
\]
For smooth functions $f_\alpha\colon V_{\alpha}\to \R$,  $f_{\beta}:V_{\beta}\to \R$, let $D_\alpha$, $D_\beta$ be differential operators on $\Gamma (W|_{V_\alpha})$, $\Gamma(W|_{V_\beta})$ which are defined by 
\[
D_{\alpha}:=c_0\left(\partial_{\theta}+\sqrt{-1} f_{\alpha}(t,\theta)\right),\quad
D_{\beta}:=c_0\left(\partial_{\theta}+\sqrt{-1} f_{\beta}(t,\theta )\right). 
\]
They are order-one formally self-adjoint differential operators of degree-one. We give a sufficient condition for the data $(D_\alpha, D_\beta )$ to be an acyclic compatible system.

On $V_\alpha \cap V_\beta$
we write
$f_\alpha=f+h$ and $f_\beta=f-h$ 
using $f:=(f_\alpha+f_\beta)/2$ and $h:=(f_\alpha-f_\beta)/2$. We also write $D=(D_\alpha +D_\beta)/2=c_0(\partial_\theta+ \sqrt{-1} f)$ and $V=(D_\alpha-D_\beta)/2=c_0 \sqrt{-1} h$ so that
$$
\{ D_\alpha, D_\beta\}=  2 D^2 -2 V^2=2(D^2 -h^2).
$$
The first eigenvalue of $D^2$ is estimated from below as follows.
Let
$$
U=U(t,\theta):=\exp(\sqrt{-1} \epsilon_0 
\int_0^\theta f(t,\theta') d\theta')
$$
be the unitary isomorphism from $W|_{V_\alpha \cap V_\beta}$ to the bundle $W'$ defined by
$$
W':=((-1,\infty) \times \R \times \C^2)/\sim,
\qquad (t,\theta, v) 
\cong (t, \theta+n, U(t,2\pi)^n v)
$$
over $V_\alpha \cap V_\beta=(-1,1) \times S^1$.
For each $t \in (-1,1)$ we restrict the operators
on $\{t\} \times S^1$.
Since $D=c_0 U^{-1} \partial_\theta U$, the minimum of the absolute values of the eigenvalues of $D$ is the same as that of $\partial_\theta$ on the flat bundle $W'|_{\{t\} \times S^1}$, which is equal to $\min_{n \in \Z}\{|n+\phi(t)|\} $,
where
$$
\phi(t):=\frac{1}{2\pi}\int_0^{2\pi} f(t,\theta')d\theta'.
$$
It implies that $\{D_\alpha, D_\beta\}$ is non-negative if
$$
\mbox{\rm (1)}\qquad\qquad
|h(t,\theta)| \leq \min_{n \in \Z}\{|n+\phi(t)|\}
$$
for every $(t,\theta) \in V_\alpha \cap V_\beta
=(-1,1) \times S^1$.

The condition for $D_\alpha|_{\pi^{-1}_\alpha(t)}$to have zero kernel is
$$
\mbox{\rm (2)}\qquad\qquad
\int_0^{2\pi} f_\alpha(t,\theta') d\theta' \nin \Z.
$$
for $t \in (-1,\infty)$. Similarly we need
$$
\mbox{\rm (3)}\qquad\qquad
\int_0^{2\pi} f_\beta(t,\theta') d\theta' \nin \Z.
$$
for $t \in (-\infty, 1)$.
These properties (1), (2) and (3) give
the required condition.
\end{example}

\begin{example}\label{T^nR^n}
For non-negative integers $m$ and $n$ satisfying $n\le m$ let $M$ be $\R^{2m-n}\times T^n$, where we regard $T^n$ as $\R^n/(2\pi\Z)^n$. 
Let 
$\{V_{\alpha}'\}_{\alpha\in A}$ be a finite open covering of $\R^{2m-n}$, and $\{ R_\alpha\}_{\alpha \in A}$ a family of subspaces of $\R^n$ spanned by rational vectors. 
Let $p_{\alpha}$ be the orthogonal projection of $R_{\alpha}$ to $\R^{2m-2}$. 
We assume that $p_{\alpha}p_{\beta}=p_{\beta}p_{\alpha}$ for each 
$\alpha, \beta\in A$. 
We put $V_{\alpha}:=V_{\alpha}'\times T^n$ and $T_{\alpha}:=R_{\alpha}/R_{\alpha}\cap(2\pi \Z)^n$. Define $U_{\alpha}$ to be $V_{\alpha}'\times T^n/T_{\alpha}$ and $\pi_{\alpha}:V_{\alpha}\to U_{\alpha}$ to be the natural projection. Then these data define a compatible fibration on $M$. 

Let $g$ be the standard product metric on $M$ and 
$J$ the almost complex structure on $M$ which is defined by
\[
J(\partial_{y_i})=
\begin{cases}
\partial_{\theta_i} & 1\le i\le n\\
\partial_{y_{m-n+i}} & n+1\le i\le m\\
-\partial_{y_{i-m+n}} & m+1\le i\le 2m-n 
\end{cases}
\]
for $x=(y,\theta)\in M$. 
Note that since $g$ is invariant under $J$, $(g,J)$ defines the Hermitian metric on $M$. 
By using the horizontal lift $\pi_\alpha^*TU_\alpha\to TV_\alpha$ with respect to $g$, it is obvious that $\{ \pi_\alpha\}$ is equipped with a compatible Riemannian metric.

Take a Hermitian line bundle $(L,\nabla^L)$ with Hermitian connection on $M$ whose restriction to $\pi_\alpha^{-1}(b)$ is a flat connection for each $\alpha\in A$ and $b\in U_\alpha$. We impose the following assumption. 
\begin{assumption}\label{nontrivial holonomy}
For all $\alpha$ and $b\in U_{\alpha}$ 
the restriction $\nabla^L|_{\pi^{-1}_{\alpha}(b)}$ is not trivially flat connection,  
i.e., its holonomy representation is non-trivial. 
\end{assumption}

We define a $\Z/2$-graded Clifford module bundle $W$ by 
\[
W:=\wedge^\bullet_\C TM_{\C}\otimes L, 
\]
where $TM_\C$ is the tangent bundle regarded as a complex vector bundle with the almost complex structure $J$. 
We can construct an compatible system $\{D_{\alpha}\}_{\alpha\in A}$ 
as in the same way in Subsection~\ref{D_H}. 

\begin{proposition}
$\{D_{\alpha}\}_{\alpha\in A}$ is an acyclic compatible system.  
\end{proposition}
\begin{proof}
By the construction it is obvious that $\{D_\alpha\}_{\alpha}$ satisfies the condition~(1), (2), and (3) in Definition~\ref{compatible acyclic system}. The condition~(4) in Definition~\ref{compatible acyclic system} follows from the fact that $g$ restricted to each fiber of $\pi_\alpha$ is flat. 
Assumption~\ref{nontrivial holonomy} and Lemma~\ref{vanishing of cohomology} imply that 
the kernel of $D_\alpha|_{\pi_\alpha^{-1}(b)}$ vanishes 
for each $\alpha$ and $b\in U_\alpha$. 
Lemma~\ref{cor2} implies that the anti-commutator $\{D_{\alpha},D_{\beta}\}$ 
is non-negative.
\end{proof} 
\end{example}

\section{Local index and localization formula} 
\label{Main theorem}

In this section we give the definition of 
local index for acyclic compatible system.  
We first give the definition under the assumption 
of local torus action. 
Note that the torus action itself is not 
essential to define the local index. 
To give the precise definition for general 
case (without torus action) we need some more definitions and settings, 
and we avoid it for simplicity in this paper. 
We give several remarks for the general case in the final subsection.  

\subsection{Definition of local index}

We consider the following setting. 
Let $M$ be a Riemannian manifold which is not necessarily compact 
and $W$ a $\Z/2$-graded $Cl(TM)$-module bundle with the Clifford 
multiplication $c$. 
Suppose that there exists an open subset $V$ of $M$ 
whose complement $M\setminus V$ is compact. 
Let $G$ be a compact torus  which acts on $V$ in an isometric way, 
and the action lifts to $W|_V$ so that it commutes with $c$. 

As we proved in Lemma~\ref{good open covering} and Proposition~\ref{rem good open covering} there exists an open covering $\{V_{\alpha}\}_{\alpha\in A}$ of $V$ and a structure of 
a good compatible fibration $\{\pi_{\alpha}\}$ on $V=\cup_{\alpha\in A} V_{\alpha}$. 
Moreover the metric on $M$ is a compatible Riemannian metric in the sense of 
Definition~\ref{compatible metric}. 
We take a smooth function $f:M\to \R$ which is $G$-invariant on the end of $V$ 
and a regular value $c\gg 1 $ of $f$ such that $f^{-1}((-\infty, c])$ is compact 
and contains $M\setminus V$. 
Then we consider the complete manifold 
$\hat M=f^{-1}((-\infty, c])\cup \left( f^{-1}(c)\times [0,\infty)\right)$ 
with the cylindrical end $\hat V=(f^{-1}((-\infty, c])\cap V) \left( f^{-1}(c)\times [0,\infty)\right)$ equipped with natural $G$-action. 
We can also construct a $\Z/2$-graded Clifford module bundle $\hat W\to \hat M$. See \cite{Fujita-Furuta-Yoshida} for example for the 
construction of the Clifford action on $\hat W$. 
Since $f^{-1}(c)$ is an admissible subset in the sense of Definition~\ref{admissible subset} the compatible fibration $\{\pi_{\alpha}\}_{\alpha\in A}$ 
can be extended to a compatible fibration $\{\hat\pi\}_{\alpha\in A}$ 
on $\hat V=\cup_{\alpha\in A}\hat V_{\alpha}$. 

Suppose that there exists a compatible system $\{D_{\alpha}\}_{\alpha\in A}$ on  the compatible fibration $\{\pi_{\alpha}\}_{\alpha}$. 
Since $D_{\alpha}$ and the anti-commutator $D_{\alpha}D_{\beta}+D_{\beta}D_{\alpha}$ are operators along $G$-orbit for all $\alpha,\beta\in A $, 
we have the following. 

\begin{proposition}
The compatible system $\{D_{\alpha}\}_{\alpha}$ 
can be extended to a compatible system $\{\hat D_{\alpha}\}_{\alpha}$ 
on $\{\hat\pi_{\alpha}\}_{\alpha}$, which has translationally invariance on the 
end. 
Moreover if $\{D_{\alpha}\}_{\alpha}$ is acyclic, then 
$\{\hat D_{\alpha}\}_{\alpha}$ is also acyclic. 
\end{proposition}

\begin{theorem}\label{acyclic->assumption}
If the compatible system $\{D_{\alpha}\}_{\alpha\in A}$ is acyclic, then, 
for a translationally invariant Dirac-type operator $D$ on $\hat M$, 
there exists a data  $(V_\infty, \rho_\infty, \epsilon_\infty)$ and  $\{\hat V_\alpha , \hat D_\alpha,  C_\alpha, \delta_\alpha,  \rho_\alpha, \epsilon_\alpha \}_{\alpha \in A}$ satisfying the conditions in Section~\ref{new}. 
In particular $D_t=D+\sum_{\alpha\in A}\rho_{\alpha}\hat D_{\alpha}\rho_{\alpha}$ 
satisfies Assumption~\ref{assumption for operator} for any $t \gg 0$. 
\end{theorem}

\begin{proof}
The proof is similar for the proofs in Subsection~\ref{DH}. 
In fact $\{C_{\alpha}, \delta_{\alpha}\}$ can be constructed as in 
Proposition~\ref{aprioriestimate2}, and
$\{\rho_{\alpha}, \epsilon_{\alpha}\}$ can be constructed as in 
Proposition~\ref{partition of unity}. 
\
\end{proof}

We can define the index $\ind(\hat M,\hat V,\hat W)$ as in 
Definition~\ref{def. of abstract local index} 
for an acyclic compatible system $\{\hat D_{\alpha}\}_{\alpha\in A}$. 

\begin{proposition}
The index $\ind(\hat M,\hat V,\hat W)$ does not depend on the 
choice of the completion $\hat M$. 
\end{proposition}
\begin{proof}
We show that for two functions $f_i:M\to \R$ and their regular values $c_i$ \ ($i=1,2$) 
the indices for $\hat M_i=f_i^{-1}((-\infty, c_i])\cup \left( f_i^{-1}(c_i)\times [0,\infty)\right)$ are equal. 
Without loss of generality we may assume that 
$f_1^{-1}((-\infty,c_1])\subset f_2^{-1}((-\infty, c_2])$, 
and we show that $\ind(\hat M_1, \hat V_1, \hat W_1)=
\ind(\hat M_2, \hat V_2, \hat W_2)$. 
In this case we have the desired equality 
by the sum formula (Lemma~\ref{sum formula}) and the vanishing lemma (Lemma~\ref{Assumption in FFY}). 
\
\end{proof}

Moreover since $\ind(\hat M, \hat V, \hat W)$ is invariant 
under the continuous deformation of the give data, 
we have the following.  

\begin{proposition}
$\ind(\hat M, \hat V, \hat W)$ does not depend on the choice 
of $\{\rho_{\infty}, \epsilon_{\infty}, C_{\alpha}$, $\delta_{\alpha}$, $\rho_{\alpha}, \epsilon_{\alpha}\}_{\alpha\in A}$. 
\end{proposition}

\begin{definition}\label{def. of local the index}
We define the {\it local index} $\ind(M,V,W)$ 
to be the index $\ind(\hat M,\hat V,\hat W)$.  
\end{definition}

\begin{remark}
The local index $\ind(M,V,W)$ depends on the local torus action on $V$ 
and the choice of open covering $\{V_{\alpha}\}_{\alpha\in A}$. 
\end{remark}

\subsection{Localization formula}
We consider the setting as in the previous subsection. 
Namely let $M$ be a Riemannian manifold and $W$ a $\Z/2$-graded 
$Cl(TM)$-module bundle. Suppose that there exists an open subset 
$V$ of $M$ with the compact complement $M\setminus V$ 
and on which a compact torus $G$ acts in an isometric way. 
We fix an open covering $\{V_{\alpha}\}_{\alpha}$ of $V$ 
so that we have a compatible fibration $\{\pi_{\alpha}\}_{\alpha}$ 
on $V$. 
Moreover suppose that there exists an acyclic compatible system $\{D_{\alpha}\}_{\alpha}$ on the compatible fibration $\{\pi_{\alpha}\}_{\alpha}$. 
We have the local index $\ind(M,V,W)$. 
The well-definedness of $\ind(M,V,W)$ implies the following 
excision formulas. 
\begin{theorem}[Excision formula 1]\label{excision1}
Let $M'$ be an open neighborhood of $M\setminus V$ such that $V\cap M'$ is $G$-invariant. Then we have 
$$
\ind(M,V,W)=\ind(M', V\cap M', W|_{M'}). 
$$
\end{theorem}

\begin{theorem}[Excision formula 2]\label{excisoin2}
Let $V'$ be a $G$-invariant open subset of $V$ such that $M\setminus V'$ is compact. Then we have 
$$
\ind(M,V,W)=\ind(M, V', W). 
$$
\end{theorem}

Note that if $M$ is closed, i.e., compact without boundary, then 
the local index is equal to the usual index of the Dirac-type operator,  
$\ind(M,W):=\ind(D)$. 
As a corollary of the excision formula 
we have the following localization formula 
of the index of Dirac-type operator. 

\begin{theorem}[Localization formula of index]\label{localization formula}
Suppose that $M$ is closed. 
Let $\cup_{i=1}^mO_i$ be an open neighborhood of $M\setminus V$ such that 
$O_i\cap V$ is $G$-invariant for each $i$ and $O_i\cap O_j=\emptyset$ if $i\neq j$. Then we have 
$$
\ind(M,W)=\sum_{i=1}^m\ind(O_i, O_i\cap V, W|_{O_i\cap V}). 
$$
\end{theorem}

\begin{remark}
The notion of acyclic compatible system 
arising from Hamiltonian torus action has 
close relation with existence of 
parallel sections of prequantizing line bundle on orbits. 
In fact the assumption of 
Proposition~\ref{Hamiltonian acyclic} implies that
there is no global 
non-zero parallel section on each orbit, 
and hence, it gives an acyclic compatible system. 
However the converse does not hold in general.
On the other hand 
the moment map image of a orbit 
with a global non-zero parallel section on it 
is a lattice point in the dual of the Lie algebra.
In this case it may be expected
that there would exist an acyclic compatible system
on the complement of the inverse image of lattice points.
If it were the case, we would have a localization
of the Riemann-Roch number as a sum of  contributions
from lattice points.
Proposition~\ref{Hamiltonian acyclic}, however,
does not realize this expectation.
Actually the complement of the inverse image of
the lattice points does not satisfy the assumption
of Proposition~\ref{Hamiltonian acyclic} in general.
We would need to delete neighborhoods of some extra points 
from the complement to make it satisfy the assumption.
In fact it is not hard to check that
the contributions from the extra points to the
Riemann-Roch number turn out to be zero, for instance,
in the case of toric action, which implies that
the localization to the lattice points is practically realized.
\end{remark}

\subsection{Technical comments for general case}\label{Technical comments}
Let $\{\pi_{\alpha}\}_{\alpha}$ be a compatible fibration 
on $M$. 
To have the full data in Section~\ref{new} using 
the compatible system on $\{\pi_{\alpha}\}_{\alpha}$ without torus action, 
we need several technical assumptions on it.  
The assumptions are satisfied in the case of 
locally toric Lagrangian fibrations considered in Section~\ref{four-dim ex}. 

When we construct a 
smooth function which is used to construct a completion of $M$ and a compatible fibration on it,  
it is convenient to use an operation $I:C^{\infty}(M)\to C^{\infty}(M)$ 
with the following properties. 

\begin{enumerate}
\item $I(f)$ is an admissible function (Definition~\ref{admissible function}) for all $f\in C^{\infty}(M)$. 
\item If $f$ is a constant function, then $I(f)$ is also a constant function.  
\item If $f$ is a non-negative function, then $I(f)$ is so.  
\item 
Let $f:M\to \R$ be a smooth function. 
If $supp f$ is contained in $V_{\alpha}$ for some $\alpha\in A$, 
then $supp I(f)$ is also contained in $V_{\alpha}$.
\end{enumerate}

We call such an operation an {\it averaging operation} 
for the compatible fibration  $\{\pi_{\alpha}\}_{\alpha}$. 
For the compatible fibration defined by local torus action, 
we have an averaging operation by using the averaging by the group action. 
If we have an averaging operation we can construct 
an admissible partition of unity from given partition of unity. 
When we construct an averaging operation it is convenient to use an open covering $\{V_{\alpha}'\}_{\alpha}$ with the following properties for all $\alpha, \beta\in A$. 

\begin{itemize}
\item $V_{\alpha}'\cap V_{\beta}=
\pi_{\alpha}^{-1}\pi_{\alpha}(V_{\alpha}'\cap V_{\beta})=\pi_{\beta}^{-1}\pi_{\beta}(V_{\alpha}'\cap V_{\beta})$.  
\item $\overline{V_{\alpha}'}\subset V_{\alpha}$. 
\end{itemize}

Under the assumption of the existence of such $\{V_{\alpha}'\}$, 
we can construct an averaging operation (with some more technical conditions) 
for a good compatible fibration. 
In fact if $\{\pi_{\alpha}\}_{\alpha\in A}$ is a good compatible fibration, 
then $A$ is equipped with a partial order with respect to the inclusion 
relation between fibers, and for a given smooth function $f$ 
we can average $f$ according to the order of $A$. 

\section{Product formula of local indices}\label{product formula of local indices}
In this section we formulate the product of 
acyclic compatible systems. 
Once we have an appropriate formulation of the product, 
then we obtain the product formula of local indices of 
the acyclic compatible systems by results in Section~3. 

\subsection{Product of compatible fibrations}
\label{product of compatible fibrations}

In this subsection we formulate a product of compatible fibrations.  
The product is defined for the following collection of data for $i=0,1$ 
which satisfy Assumption~\ref{assumption:bundle}. 

\begin{enumerate}
\item $M_i$ : a manifold. 
\item $V_i$ : an open set of $M_i$. 
\item $\{\pi_{i,\alpha}:V_{i,\alpha}\to U_{i,\alpha}\ | \ \alpha\in A_i\}$ : 
a compatible fibration on $V_i$. 
\item $K$ : a compact Lie group which acts smoothly on $M_1$. 
\item $\pi_P:P\to M_0$ : a principal $K$-bundle over $M_0$. 
\end{enumerate}


\begin{assumption}\label{assumption:bundle}
\begin{itemize}
\item[(1)] 
$V_1$ is $K$-invariant and 
the fibrations $V_{1,\alpha}\to U_{1,\alpha}$ 
are $K$-equivariant fiber bundles for all $\alpha\in A_1$.

\item[(2)]  
There exist principal $K$-bundles 
$P_{\alpha}\to U_{0,\alpha}$ and bundle maps $P|_{V_{0,\alpha}}\to P_{\alpha}$  for all $\alpha\in A_0$. 
%
\end{itemize}
\end{assumption}

For later convenience we take an open neighborhood $V_{i,\infty}$ 
of $M_i\setminus V_i$ and consider the trivial fiber bundle structure 
$\pi_{i,\infty}: V_{i,\infty}\to V_{i,\infty}$.
In other words we consider a compatible fibration 
$\{\pi_{i,\alpha}:V_{i,\alpha}\to U_{i,\alpha} \ | \ \alpha\in A_i\cup\{\infty\}\}$ on 
$M_i=V_{i,\infty}\cup(\cup_{\alpha}V_{i,\alpha})$. 
Let $M$ be 
the quotient manifold by the diagonal action of $K$ on $P\times M_1$. 
Then the natural map $\pi:M\to M_0$ is a 
fiber bundle whose fiber is equal to $M_1$. 
To define a structure of compatible fibration on $M$ 
we first prepare several notations for $i=0,1$. 
\begin{itemize}

\item $\tilde A_i:=A_i\cup \{\infty\}$. 
\item 
$\tilde A:=\tilde A_0\times \tilde A_1$. 
\item 
$A:=\tilde A\setminus (\infty,\infty)$. 
\item $V_{\alpha_0,\alpha_1}:=P|_{V_{0,\alpha_0}}\times_KV_{1,\alpha_1}$ 
for $\alpha_i\in \tilde A_i$. 
\item $U_{\alpha_0,\alpha_1}:=P_{\alpha_0}\times_KU_{1,\alpha_1}$ 
for $\alpha_i\in \tilde A_i$. 
\item
$V:=\bigcup_{(\alpha_0,\alpha_1)\in A}V_{\alpha_0,\alpha_1}$. 




\end{itemize}

Then we have the following from the construction of $V_{\alpha_0,\alpha_1}$ and $U_{\alpha_0,\alpha_1}$. 

\begin{proposition}
For $(\alpha_0,\alpha_1) \in A$ let $\pi_{\alpha_0,\alpha_1}:V_{\alpha_0,\alpha_1}\to U_{\alpha_0,\alpha_1}$ be the induced natural map. A collection of data 
$$
\{\pi_{\alpha_0,\alpha_1}:V_{\alpha_0,\alpha_1}\to U_{\alpha_0,\alpha_1} \ 
| \ (\alpha_0,\alpha_1) \in A\}
$$ is a compatible fibration on 
$V=\bigcup_{(\alpha_0,\alpha_1)\in A}V_{\alpha_0,\alpha_1}$.
\end{proposition}


\subsection{Product of acyclic compatible systems}
\label{product of acyclic compatible systems}

In this subsection we define a product of acyclic compatible systems. 
To define the product we consider the following data together with the 
data (1), (2), (3), (4) and (5) in Subsection~\ref{product of compatible fibrations}. 

\begin{itemize}
\item[(6)] a compatible Riemannian metric on $M_i$. 
\item[(7)] $W_i$ : a $Cl(TM_i)$-module bundle over $M_i$. 
\item[(8)] $\{D_{i,\alpha} \ | \ \alpha\in A_i \}$ : 
acyclic compatible system over $V_i=\cup_{\alpha\in A_i}V_{i,\alpha}$.
\end{itemize}

Together with Assumption~\ref{assumption:bundle} we impose the following assumption.

\begin{assumption}\label{assumption:G-equiv}
\begin{enumerate}
\item The metric on $M_1$ is $K$-invariant, and 
$W_1\to M_1$ and $\{D_{1,\alpha}\}$ are $K$-equivariant. 
\item For $i=0,1$ we assume the following. 
\begin{itemize}
\item A compact torus $G_i$ acts on $V_i$ in an isometric way. 
\item $G_i$-action lifts to $W_i|_{V_i}$ so that it commutes with the 
Clifford action. 
\item The compatible fibration $\{\pi_{i, \alpha}\}_{\alpha}$ is 
defined by $G_i$-action. 
\item $G_0$-action on $V_0$ lifts to $P|_{V_0}$. 
\item $K$-action on $V_1$ commutes with $G_1$-action. 
\end{itemize}
\end{enumerate}
\end{assumption} 

Though the condition (2) in Assumption~\ref{assumption:G-equiv} is not essential for us, we assume it for simplicity. 

From the Assumption~\ref{assumption:bundle}~(2). the restrictions of $P$ 
at each fibers of $\pi_{0,\alpha}$ are trivial. 
Moreover we have the following.

\begin{lemma}\label{connection on P}
There exists a connection on $P$ 
which is trivially flat over each fiber of $\pi_{0,\alpha}$ 
for all $\alpha\in A_0$. 
\end{lemma}
\begin{proof}
Take connections $\bar\nabla_\alpha$ 
for each $P_\alpha\to U_{0,\alpha}$. 
Let $\nabla_\alpha$ be the pull-back connections 
of them to $P|_{V_{0,\alpha}}\to V_{0,\alpha}$ by $\pi_{0,\alpha}$. 
Define a connection $\nabla$ on $P$ by patching $\{\nabla_\alpha\}_{\alpha}$ by 
an admissible partition of unity $\{\rho_\alpha^2\}_{\alpha}$, 
which satisfies the required property. \
\end{proof}

Using this connection on $P$ and the compatible metric on $M_0$ 
we have the metric on $P$, and hence the metric on $M$. 
Moreover since the connection is trivial along fibers of $\{\pi_{0,\alpha}\}$ 
it induces a family of connections of $\{P_{\alpha}\}$, 
and hence a family of metrics on them. 
Combining them with the $K$-invariant compatible metric on $M_1$ 
we have a family of metrics 
of $\{U_{\alpha_0\alpha_1}\}$ and so on. 
It defines a compatible metric of a compatible fibration 
$M=\cup_{(\alpha_0,\alpha_1)}V_{\alpha_0,\alpha_1}$.  
We put $\tilde W_0:=\pi^*W_0=\pi_P^*W_0\times_K M_1$, 
$\tilde W_1:=P\times_KW_1$ and 
$W:=\tilde W_0\otimes \tilde W_1$. 
Then $W\to M$ is a Clifford module bundle of 
$M=\cup_{(\alpha_0,\alpha_1)}V_{\alpha_0,\alpha_1}$ 
with respect to the above induced compatible metric. 
Now we define differential operators 
$\tilde D_{0,\alpha_0}$ and $\tilde D_{1,\alpha_1}$ 
for each $\alpha_0\in A_{0}$ and $\alpha_1\in A_1$ which act on 
$\Gamma(\tilde W_0|_{V_{\alpha_0,\alpha_1}})$ and 
$\Gamma(\tilde W_1|_{V_{\alpha_0,\alpha_1}})$ respectively. 
The operator $\tilde D_{1,\alpha_1}$ is the one induced from 
the $K$-equivariant operator $D_{1,\alpha_1}$ on 
$\Gamma(W_1|_{V_{1,\alpha_1}})$. 
On the other hand $\tilde D_{0,\alpha_0}$ is the operator defined as follows: 
Since $D_{0,\alpha_0}$ is a differential operator along fibers of 
$\pi_{0,\alpha_0}$ and 
$P$ is trivial at each fiber of $\pi_{0,\alpha_0}$,  
we can define the operator acting on the restriction 
$\Gamma(\pi^*_PW_0|_{\rm fiber}\times_G M_1)$ 
using $D_{0,\alpha_0}|_{\rm fiber}$ and 
a trivialization of $P|_{\rm fiber}$. 
Since such  operators along fibers do not depend on trivialization 
we have a differential operator $\tilde D_{0,\alpha_0}$ acting on 
$\Gamma(\tilde W_0|_{V_{\alpha_0,\alpha_1}})$. 
Using these operators we define an operator acting on $\Gamma(W|_V)$ 
by $D_{\alpha_0,\alpha_1}:=\tilde D_{0,\alpha_0}\otimes \id_{\tilde W_1}+
\epsilon_{\tilde W_0}\otimes\tilde D_{1,\alpha_1}$, 
where $\epsilon_{\tilde W_0}$ is a map on $\tilde W_0$ 
defined by $\epsilon_{\tilde W_0}(v):=(-1)^{\deg v}v$. 
For later convenience we put 
$\tilde D_{\alpha_0,\infty}=\tilde D_{\infty,\alpha_1}=0$. 

\begin{proposition}\label{acyclic system on the product}
A collection of differential operators 
$\{D_{\alpha_0,\alpha_1} \ | \ (\alpha_0,\alpha_1)\in A\}$ 
is an acyclic compatible system on $(V,W|_{V})$. 
\end{proposition}

\begin{proof}
Since $\tilde D_{0,\alpha_0}\otimes \id_{\tilde W_1}$ and 
$\epsilon_{\tilde W_{0}}\otimes \tilde D_{1,\alpha_1}$ anti-commute each other 
we have 
$$
D_{\alpha_0,\alpha_1}^2
=\tilde D_{0,\alpha_0}^2
\otimes \id_{\tilde W_1} 
+\id_{\tilde W_0}\otimes\tilde D_{1,\alpha_1}^2 
$$
and the equality among anti-commutators 
$$
\{D_{\alpha_0,\alpha_1}, D_{\alpha_0',\alpha_1'}\}=
\{\tilde D_{0,\alpha_0}, \tilde D_{0,\alpha_0'}\}\otimes\id_{\tilde W_1} +
 \id_{\tilde W_0}\otimes \{\tilde D_{1,\alpha_1}, \tilde D_{1,\alpha_1'}\}. 
$$
These equalities imply that if 
$\{D_{i,\alpha_i}\}_{\alpha_i}$ are acyclic compatible systems, 
then $\{D_{\alpha_0,\alpha_1}\}_{(\alpha_0,\alpha_1)}$ is so. \
\end{proof}

\subsection{Product formula of local indices}

In this subsection we consider the setting in Subsection~\ref{product of compatible fibrations} and \ref{product of acyclic compatible systems}.

To apply the construction in Section~\ref{Main theorem}, 
we deform the end of $M_i$ and $M$ as in the following way: 
we deform the end $V_i$ of $M_i$ into $\hat M_i$ 
so that $\hat M_i$ has the cylindrical end, 
and, we can assume that the acyclic compatible systems and admissible partitions of unity on $\hat M_i$ is translationally invariant. 
We can also assume that the deformation $\hat M_1$ 
for $M_1$ is $G$-equivariant. 
In addition we can deform $P$ 
into $\hat P$ together with its connection so that it is 
translationally invariant on the end.  
Then the metric on the product $\hat M=\hat P\times _G\hat M_1$ is complete. 
Hereafter we denote the completion $\hat M_i$ (resp. $\hat M$, $\hat P$) 
by the original letter $M_i$ (resp. $M$, $P$) for simplicity.

Let $\{\rho_{i,\alpha}^2\}_{\alpha\in\tilde A_i}$ 
be admissible partitions of unity of $M_i$. 
We can assume that $\{\rho_{1,\alpha}^2\}$ is $K$-invariant. 
Using these partitions of unity we have an admissible partition of unity 
$\{\rho_{\alpha_0,\alpha_1}^2\}_{(\alpha_0,\alpha_1)\in\tilde A}$ 
on $M=\cup_{(\alpha_0,\alpha_1)}V_{\alpha_0,\alpha_1}$ which is defined by 
$\rho_{\alpha_0,\alpha_1}([u,y]):=
\rho_{0,\alpha_0}(\pi(u))\rho_{1,\alpha_1}(y)$ for $[u,y]\in M$. 

For any translationally invariant Dirac-type operators $D_i$ on $\Gamma(W_i)$, 
using a local trivialization of $P$ 
we have their lifts $\tilde D_0\otimes \id_{\tilde W_1}$ and  
$\epsilon_{\tilde W_0}\otimes \tilde D_1$ on $\Gamma(W)$ as in Subsection~3.6. 
Note that $D:=\tilde D_0\otimes \id_{\tilde W_1}+\epsilon_{\tilde W_0}\otimes \tilde D_1$ 
is a translationally invariant Dirac-type operator on $\Gamma(W)$. 

Because of Lemma~\ref{acyclic system on the product} 
if we take a positive number $t$ large enough, then 
the inequality in Proposition~\ref{KeyProp-local} 
holds for deformed operators $D_{i,t}$ and $D_t$ on $M_i$ and $M$. 
On the other hand we have a decomposition $D_t=D_t^B+D_t^F$, where 
\begin{eqnarray*}
D_t^B&:=&\left(\tilde D_0+t\sum_{\alpha_0}
\left(\sum_{\alpha_1}\rho_{\alpha_1}^2\right)
\pi^*\rho_{0,\alpha_0}\tilde D_{0,\alpha_0}\pi^*\rho_{0,\alpha_0}\right)
\otimes \id_{\tilde W_1} \\
&=&\left(\tilde D_0+t\sum_{\alpha_0}
\pi^*\rho_{0,\alpha_0}\tilde D_{0,\alpha_0}\pi^*\rho_{0,\alpha_0}\right)
\otimes \id_{\tilde W_1}  \\ 
D_t^F&:=&\epsilon_{\tilde W_0}\otimes\left(\tilde D_1+t\sum_{\alpha_1}
\left(\sum_{\alpha_0}\pi^*\rho_{\alpha_0}^2\right)
\rho_{1,\alpha_1}\tilde D_{1,\alpha_1}\rho_{1,\alpha_1}\right)\\ 
&=&\epsilon_{\tilde W_0}\otimes\left(\tilde D_1+t\sum_{\alpha_1}
\rho_{1,\alpha_1}\tilde D_{1,\alpha_1}\rho_{1,\alpha_1}\right). 
\end{eqnarray*}
Note that $D_t^F$ is a differential operator along fibers of $\pi:M\to M_0$. 
They anti-commutes each other. Namely,   
\begin{lemma}\label{anticommutator}
$D_t^BD_t^F+D_t^FD_t^B=0$.
\end{lemma}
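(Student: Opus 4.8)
The plan is to reduce the anti-commutation $\{D^B_t, D^F_t\} = 0$ to the pointwise anti-commutation statement already established in Lemma~\ref{anti-commute:section3}, using the local product structure of $M = P \times_G M_1$ over $M_0$. First I would recall that, by construction, $D^B_t = \tilde D_0' \otimes \id_{\tilde W_1}$ where $\tilde D_0' := \tilde D_0 + t\sum_{\alpha_0}\pi^*\rho_{0,\alpha_0}\,\tilde D_{0,\alpha_0}\,\pi^*\rho_{0,\alpha_0}$, and $D^F_t = \epsilon_{\tilde W_0} \otimes \tilde D_1'$ where $\tilde D_1' := \tilde D_1 + t\sum_{\alpha_1}\rho_{1,\alpha_1}\,\tilde D_{1,\alpha_1}\,\rho_{1,\alpha_1}$; here $\tilde D_1'$ is a fiberwise operator on $\pi\colon M\to M_0$ built $G$-equivariantly from an operator on $W_1$, while $\tilde D_0'$ is patched together over $M_0$ from local product operators using the admissible partition of unity $\{\rho^2_{0,\alpha_0}\}$ pulled back from the base. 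The key observation is that over a trivializing open set $\pi^{-1}(V_{0,\alpha_0}) \cong V_{0,\alpha_0}\times M_1$, the operator $\tilde D_0'$ differentiates only in the $V_{0,\alpha_0}$-directions (with coefficients constant along $M_1$, by $G$-equivariance of the identification), whereas $\tilde D_1'$ differentiates only along $M_1$.

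The main steps are then: (1) localize via a trivialization $P|_{V_{0,\alpha_0}}\cong V_{0,\alpha_0}\times G$, so that $W|_{\pi^{-1}(V_{0,\alpha_0})}\cong (W_0|_{V_{0,\alpha_0}})\boxtimes W_1$ and the operators split; (2) on such a product piece, verify that $\tilde D_{0,\alpha_0}\otimes \id$ anti-commutes with $\epsilon_{\tilde W_0}\otimes \tilde D_{1,\alpha_1}$ — this is exactly the degree/product computation behind Lemma~\ref{anti-commute:section3}, using that $\tilde D_{0,\alpha_0}$ is degree-one and acts only on the $W_0$-factor while $\tilde D_{1,\alpha_1}$ is degree-one and acts only on the $W_1$-factor, and that $\epsilon_{\tilde W_0}$ is the grading involution on $\tilde W_0$; (3) note that the base functions $\pi^*\rho_{0,\alpha_0}$ commute with $\tilde D_1'$ (they are pulled back from $M_0$ and $\tilde D_1'$ is fiberwise), and similarly $\rho_{1,\alpha_1}$, being $G$-invariant and fiberwise, together with $\epsilon_{\tilde W_0}$ presents no sign obstruction against $\tilde D_0'$; (4) assemble: since all the pieces anti-commute and the cut-off functions commute appropriately, $\{D^B_t, D^F_t\} = 0$ globally. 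One should also check that $\tilde D_0$ and $\tilde D_1$ themselves anti-commute after tensoring with $\epsilon_{\tilde W_0}$, which is precisely Lemma~\ref{anti-commute:section3}, and that adding the extra $t$-dependent terms preserves this by the same argument.

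I expect the main obstacle to be bookkeeping the Koszul signs correctly: the operator $\tilde D_0'$ acts on the first tensor factor and $\tilde D_1'$ on the second, and the insertion of $\epsilon_{\tilde W_0}$ in $D^F_t$ is exactly what converts the commutator $[\tilde D_0'\otimes\id,\ \id\otimes\tilde D_1'] $ of two odd operators on a graded tensor product into an anti-commutator that vanishes. Making this precise requires being careful that the graded tensor product conventions (spelled out in the remark after Proposition~\ref{Hamiltonian acyclic}) are used consistently, and that the fiberwise/base splitting of derivatives really does hold after the $G$-equivariant local trivialization — i.e., that no connection term in $\tilde D_0$ secretly differentiates along the fiber. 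Once the local product picture is set up this is routine, but it is the only place where an error could creep in; the global patching via the admissible partition of unity is then formal, exactly as in the proof of Lemma~\ref{anti-commute:section3}.
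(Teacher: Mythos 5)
Your proposal is correct and follows essentially the same route as the paper: the paper gives no separate proof of Lemma~\ref{anticommutator} precisely because it is the argument of Lemma~\ref{anti-commute:section3} applied term by term, namely local anti-commutation of the degree-one factors under the graded tensor product (via $\epsilon_{\tilde W_0}$) together with the fact that the cut-off functions $\pi^*\rho_{0,\alpha_0}$ and $\rho_{1,\alpha_1}$ commute with the fiberwise and base operators respectively in each trivialization of $P$. Your attention to the graded sign conventions and to the absence of fiber derivatives in $\tilde D_0$ is exactly the right bookkeeping, and no gap remains.
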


Moreover we have the following as in the same in 
Theorem~\ref{acyclic->assumption}.  
\begin{lemma}\label{product assumption}
If  $t$ is large enough, then $D_t$ and $D_t^F$ 
satisfy Assumption~\ref{assumption product}. 
\end{lemma}

By Lemma~\ref{product assumption} we can define the local indices 
$\ind(M_1,V_1,W_1)$ and $\ind(M,V,W)$. 
Note that since all the data for $\ind(M_1,V_1,W_1)$ 
is $K$-equivariant the index can be 
defined as the $K$-equivariant index $\ind_K(M_1,V_1,W_1)\in R(K)$. 
When we write 
$\ker D_{1,t}
=E^0\oplus E^1$
as the $K$-equivariant $\Z/2$-graded vector space, 
$K$-equivariant local index can be 
written as $\ind_K(M_1,V_1,W_1)=[E^0]-[E^1]\in R(K)$. 
Let $\underline{E}^i$ be the vector bundle over 
$M_0$ defined by $\underline{E}^i=P\times_KE^i$. 
Then the acyclic compatible system on $(M_0,V_0,W_0)$ 
induces another acyclic compatible systems on $(M_0,V_0,W_0\otimes \underline{E}^i)$ via 
$\{D_{0,\alpha}\otimes\id_{E^i}\}$ for $i=0,1$. 
Lemma~\ref{anticommutator}, Lemma~\ref{product assumption} and 
the product formula in Section~3 imply the following 
product formula of local indices. 

\begin{theorem}\label{product}
We have the following product formula. 
$$
\ind(M_0,V_0,W_0\otimes \underline{E}^0)-
\ind(M_0,V_0,W_0\otimes \underline{E}^1)
=\ind(M,V,W)\in\Z. 
$$
\end{theorem}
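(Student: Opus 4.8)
The plan is to reduce the statement to the abstract product formula for elliptic operators on complete manifolds, which is the theorem proved at the end of Section~3. First I would invoke the reductions already set up in this subsection: using the excision property of the local index (Theorem~\ref{exision}) I may replace $M_i$ and $M$ by their deformations with generalized cylindrical ends so that all the data — the strongly acyclic compatible systems, the admissible partitions of unity, the principal bundle $P$ together with a chosen connection, and the $G$-equivariant structure on the $M_1$-factor — satisfy Condition~(TI). This is legitimate because the deformations do not change any local index, and the product $\hat M = \hat P \times_G \hat M_1$ inherits a generalized cylindrical end. On this deformed data I form the operators $D_{i,t}$ on $M_i$ and $D_t$ on $M$ for $t \gg 1$ so large that the inequality in Proposition~\ref{KeyProp-local} holds simultaneously for all of them; then each of $D_{i,t}$ and $D_t$ satisfies Assumption~\ref{assumption for operator} by Lemma~\ref{Assumption in FFY}, so their local indices are the super-dimensions of the $L^2$-kernels in the sense of Section~3.

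Next I would exhibit the splitting $D_t = D_t^B + D_t^F$ displayed above, where $D_t^F$ acts along the fibers of $\pi\colon M \to M_0$ and is (the lift of) the deformed fiberwise operator $D_{1,t}$, while $D_t^B$ is the horizontal part built from $D_0$ and $\{D_{0,\alpha}\}$ via the partition of unity on $M_0$ and the local trivializations of $P$. The two facts that make the machinery of Section~3 applicable are Lemma~\ref{anticommutator}, the anticommutation $D_t^B D_t^F + D_t^F D_t^B = 0$, and Lemma~\ref{product assumption}, which asserts that $(M, W, D_t)$ and the fiberwise data $(M_1, W_1, D_{1,t})$ together satisfy Assumption~\ref{assumption product}. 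I would then set $R := E_0(D_{1,t}) = \ker D_{1,t} \cap L^2$, which is the $G$-equivariant $\Z/2$-graded space $E^0 \oplus E^1$, and form the bundles $\underline E^i = P \times_G E^i$ over $M_0$. The abstract product formula gives $\ind(\tilde D_0^{\,\mathrm{tot}} + \tilde D_1^{\,\mathrm{tot}}) = \ind \tilde D_R$, where on the right the operator $\tilde D_R$ acts on $\Gamma(M_0, W_0 \otimes \tilde R)$ with $\tilde R = P \times_G R$. Since $R = E^0 \oplus E^1$ splits $G$-equivariantly and $\epsilon_{\tilde W_0}$ acts by $\pm 1$ according to the $\Z/2$-grading, the operator $\tilde D_R$ decomposes as the difference (in super-dimension) of the two operators $D_{0,t} \otimes \id_{E^i}$ on $W_0 \otimes \underline E^i$ for $i = 0, 1$; each of these is exactly the deformed operator defining $\ind(M_0, V_0, W_0 \otimes \underline E^i)$, using that $\{D_{0,\alpha} \otimes \id_{E^i}\}$ is again a strongly acyclic compatible system. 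Collecting signs yields precisely $\ind(M_0,V_0,W_0\otimes\underline E^0) - \ind(M_0,V_0,W_0\otimes\underline E^1) = \ind(M,V,W)$.

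The main obstacle I expect is bookkeeping rather than a deep new idea: one must check carefully that the operator $\tilde D_R$ produced by the Section~3 product formula genuinely coincides, as a deformed operator in the sense of Section~4, with the direct sum over $i$ of the operators defining $\ind(M_0, V_0, W_0 \otimes \underline E^i)$. This requires confirming that the horizontal partition-of-unity term $\sum_{\alpha_0}\pi^*\rho_{0,\alpha_0}\tilde D_{0,\alpha_0}\pi^*\rho_{0,\alpha_0}$ restricts, on the image of the embedding $\Gamma(M_0, W_0 \otimes \tilde R) \hookrightarrow \Gamma(M, W)$, to $\sum_{\alpha_0}\rho_{0,\alpha_0}(D_{0,\alpha_0}\otimes\id_{E^i})\rho_{0,\alpha_0}$ on each summand, and that the $\R_{>0}^{m}$-translational invariance and Condition~(TI) are preserved under all of these identifications so that Lemma~\ref{Assumption in FFY} really does apply to $W_0 \otimes \underline E^i$. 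Once these compatibilities are verified, the theorem is immediate from the product formula of Section~3 together with the definition of the local index.
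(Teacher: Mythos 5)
Your proposal follows essentially the same route as the paper: deform all data to generalized cylindrical ends satisfying Condition~(TI), split the deformed operator as $D_t=D_t^B+D_t^F$, invoke Lemma~\ref{anticommutator} and Lemma~\ref{product assumption} to verify Assumption~\ref{assumption product} with $R=\ker D_{1,t}=E^0\oplus E^1$, and then apply the Section~3 product formula, identifying $\tilde D_R$ with the deformed operators on $W_0\otimes\underline{E}^i$ via the induced strongly acyclic systems $\{D_{0,\alpha}\otimes\id_{E^i}\}$. This matches the paper's argument, including the bookkeeping identifications you flag at the end.
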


\section{Four-dimensional case}\label{four-dim ex}
In this section we apply the localization formula (Theorem~\ref{localization formula}), the product formula (Theorem~\ref{product}), and Theorem~\ref{calculation} to show that for a four-dimensional closed locally toric Lagrangian fibration the Riemann-Roch number is equal to the number of Bohr-Sommerfeld fibers (Theorem~\ref{RR=BSforlocallytoricLagrangianfibrations}).  
\subsection{Local indices for elliptic singularities}
A critical point of a $2n$-dimensional singular Lagrangian fibration $\mu \colon (M,\omega)\to B$ is called a {\it nondegenerate elliptic singular point of rank $k\ (\le n)$} if there exists a symplectic coordinates $x_1,\ldots ,x_n,y_1,\ldots ,y_n$ such that in these coordinate, $\mu$ is written as $\mu=(x_1, \ldots, x_k,x_{k+1}^2+y_{k+1}^2,\ldots,x_n^2+y_n^2)$. See \cite{Williamson,Symington,Hamilton}. In this subsection we calculate local indices for elliptic singularities in four-dimensional case. 

\subsubsection{Definition of $RR_0(a_1,a_2)$}\label{singularBSinD}
Let $D:=\{ z\in \C\mid \abs{z}<1 \}$ be the unit open disc in $\C$. Let $X_0$ be the product of two copies of $D$ with symplectic structure 
\[
\omega_0:=\frac{\sqrt{-1}}{2\pi}\sum_{k=1}^2dz_k\wedge d\overline{z}_k,
\]
and $(L_0,\nabla^{L_0})$ a prequantizing line bundle on $(X_0,\omega_0)$. 

Let us consider the structure of a singular Lagrangian fibration $\mu_0 \colon (X_0,\omega_0 )\to [0,1)\times [0,1)$ on $X_0$ which is defined by 
\[
\mu_0(z):=\left( \abs{z_1}^2,\abs{z_2}^2\right) . 
\]
We put the following assumption. 
\begin{assumption}\label{assumption0}
The cohomology groups $H^*\left( \mu^{-1}_0(b);(L_0,\nabla^{L_0})|_{\mu_0^{-1}(b)}\right)$ vanish for all points $b\in [0,1)\times [0,1)$ except for $b=(0,0)$. 
\end{assumption}

Let $a_1$ and $a_2\in \Z$ be arbitrary integers. We define a good compatible fibration on $X_0\setminus \{ (0,0)\}$ consisting of three quotient maps of the torus actions by 
\[
\begin{split}
&\pi^0_0\colon V^0_0:=X_0\cap \left( \C^*\times \C^*\right)\to U^0_0:=V^0_0/T^2 ,\\
&\pi^0_1\colon V^0_1:=\{ (z_1,z_2)\in X_0\mid \abs{z_1}>\abs{z_2}, \abs{z_1}^2+a_1\abs{z_2}^2\notin \Z\}\to U^0_1:=V^0_1/S^1 ,\\
&\pi^0_2\colon V^0_2:=\{ (z_1,z_2)\in X_0\mid \abs{z_1}<\abs{z_2}, a_2\abs{z_1}^2+\abs{z_2}^2\notin \Z\}\to U^0_2:=V^0_2/S^1 ,  
\end{split}
\]
where the $T^2$-action on $V^0_0$ is the standard one, the $S^1$-action on $V^0_1$ is defined by
\begin{equation}\label{action1}
t(z_1,z_2):=(tz_1,t^{a_1}z_2),
\end{equation}
and the $S^1$-action on $V^0_2$ is defined by 
\begin{equation}\label{action2}
t(z_1,z_2):=(t^{a_2}z_1,tz_2). 
\end{equation}

We take and fix an arbitrary Hermitian structure $(g_0,J_0)$ invariant under the standard $T^2$-action on $X_0$ and compatible with $\omega_0$. Since $g_0$ is $T^2$-invariant $g_0$ induces a compatible Riemannian metric of this compatible fibration. 

Let $W_0$ be the Hermitian vector bundle on $X_0$ which is defined by
\[
W_0:=\wedge_\C^\bullet (TX_0)_\C\otimes L_0. 
\]
$W_0$ is a $\Z_2$-graded Clifford module bundle with respect to the Clifford module structure \eqref{Clifford}. 
We take a compatible system $\{ D_i\}_{i=0,1,2}$ to be the family of de Rham operators along fibers of $\pi^0_i$ ($i=0, 1 ,2$) which is defined by the same way as in Subsection~\ref{DH}. By Assumption~\ref{assumption0} and the definition of $V^0_i$ ($i=1, 2$) the kernel of all $D_i$ vanish. We should notice that for $z\in V^0_i$ the cohomology groups $H^*\left(\left(\pi^0_i\right)^{-1}\left(\pi^0_i(z)\right);\left(L_0,\nabla^{L_0}\right)|_{\left(\pi^0_i\right)^{-1}\left(\pi^0_i(z)\right)}\right)$ vanish if and only if $z$ satisfies the equality $\abs{z_1}^2+a_1\abs{z_2}^2\notin \Z$ for $i=1$ and $a_2\abs{z_1}^2+\abs{z_2}^2\notin \Z$ for $i=2$. Hence $\{D_i\}$ is acyclic.

\begin{definition}
We define $RR_0(a_1,a_2)$ to be the local index in the sense of Definition~\ref{def. of local the index} for the above data and any Dirac-type operator on $W_0$. 
\end{definition}
\begin{remark}
$RR_0(a_1,a_2)$ does not depend on the choice of a compatible Hermitian structure $(g_0,J_0)$ since it is deformation invariant. 
\end{remark}

\subsubsection{Definition of $RR_1(a_+,a_-)$}\label{singularnonBS}
Let $X_1:=(0,1)\times S^1\times D$ be the product of $(0,1)\times S^1$ and $D$ with symplectic structure
\[
\omega_1:=dr\wedge d\theta +\frac{\sqrt{-1}}{2\pi}dz\wedge d\bar{z} 
\]
for $(r,e^{2\pi\sqrt{-1}\theta},z)\in X_1$, and $(L_1,\nabla^{L_1})$ a prequantizing line bundle on $(X_1,\omega_1)$. 

Let us consider the structure of singular Lagrangian fibration $\mu_1\colon (X_1,\omega_1)\to (0,1)\times [0,1)$ which is defined by
\[
\mu_1(r,u,z):=\left( r,\abs{z}^2\right). 
\]
We put the following assumption. 
\begin{assumption}\label{assumption1}
For all points $b\in (0,1)\times [0,1)$, $H^*(\mu^{-1}_1(b);(L_1,\nabla^{L_1})|_{\mu_1^{-1}(b)})$ vanish. 
\end{assumption}

Let $a_+$ and $a_-\in \Z$ be arbitrary integers. We take an element $r_1\in (0,1)$ and fix it. Then, we define a good compatible fibration on $X_1\setminus \mu_1^{-1}(r_1,0)$ consisting of three quotient maps of the torus actions by 
\[
\begin{split}
&\pi^1_0\colon V^1_0:=(0,1)\times S^1\times \left( D\setminus \{0\}\right)\to U^1_0:=V^1_0/T^2 ,\\
&\pi^1_1\colon V^1_1:=\{(r,u,z)\in (r_1,1)\times S^1\times D\mid r+a_+\abs{z}^2\notin\Z\} \to U^1_1:=V^1_1/S^1,\\
&\pi^1_2\colon V^1_2:=\{(r,u,z)\in (0,r_1)\times S^1\times D\mid r+a_-\abs{z}^2\notin \Z\} \to U^1_2:=V^1_2/S^1 , 
\end{split}
\]
where the $T^2$-action on $V^1_0$ is defined by
\[
t(r,u,z):=(r,t_1u,t_2z),
\]
the $S^1$-action on $V^1_1$ is defined by
\[
t(r,u,z):=(r,tu,t^{a_+}z),
\]
and the $S^1$-action on $V^1_2$ is defined by 
\[
t(r,u,z):=(r,tu,t^{a_-}z). 
\]

We take an arbitrary Hermitian structure $(g_1,J_1)$ which is invariant under the standard $T^2$-action on $X_1$ and compatible with $\omega_1$ and fix it. We define the $\Z_2$-graded Clifford module bundle $W_1$ and the acyclic compatible system in the same way as in Section~\ref{singularBSinD}. 
\begin{definition}
We define $RR_1(a_+,a_-)$ to be the local index in the sense of Definition~\ref{def. of local the index} for the above data and any Dirac-type operator on $W_1$. 
\end{definition}
\begin{remark}
$RR_1(a_+,a_-)$ does not depend on the choice of a compatible Hermitian structure $(g_1,J_1)$ since it is deformation invariant. 
\end{remark}

\subsubsection{Computation}
First we can show the following lemma. 
\begin{lemma}\label{relation1}
For integers $a, b, c\in \Z$ we have
\[
RR_0(a,b)=RR_0(b,a),\quad
RR_1(a,b)=RR_1(a+c,b+c).
\]
\end{lemma}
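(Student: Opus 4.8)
The plan is to prove both identities by exhibiting, in each case, an explicit isomorphism between the defining data for the two local indices and invoking deformation invariance together with the well-definedness (excision) of the local index established in Section~4.

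First I would treat the symmetry $RR_0(a,b)=RR_0(b,a)$. The manifold $X_0=D\times D$ carries the obvious holomorphic involution $\tau(z_1,z_2)=(z_2,z_1)$, which is symplectic for $\omega_0$, preserves the fibration $\mu_0$, and interchanges the two boundary strata. The key observation is that $\tau$ carries the $S^1$-action \eqref{action1} with weight $a$ on $V_1^0$ to the $S^1$-action \eqref{action2} with the roles of the coordinates swapped, i.e.\ to the action on the corresponding piece with parameter $a$ in the second slot; more precisely, pulling back the compatible fibration $\{\pi_i^0\}$ for the parameters $(b,a)$ by $\tau$ produces the compatible fibration for the parameters $(a,b)$ (the index set $\{0,1,2\}$ is relabelled by the transposition of $1$ and $2$, which is harmless since the good-covering order is only used through the relation $R_\alpha\subset R_\beta$, here $R_0\subset R_1,R_2$ symmetrically). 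After choosing the Hermitian structure $(g_0,J_0)$ to be $\tau$-invariant (possible by averaging, and allowed since $RR_0$ is independent of this choice by the stated remark) and pulling back $(L_0,\nabla^{L_0})$, the bundle $W_0$, the compatible system $\{D_i\}$, the admissible partition of unity, and the deformed Dirac operator $D_t$ all transport under $\tau$. Hence $\tau$ induces an isomorphism of the $L^2$-kernels respecting the $\Z/2$-grading, so the two super-dimensions agree.

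Next I would treat $RR_1(a,b)=RR_1(a+c,b+c)$. Here the idea is a ``shearing'' diffeomorphism of $X_1=(0,1)\times S^1\times D$ coming from the $S^1$ acting on the $S^1\times D$ factor. Concretely, the integer $c$ defines a bundle automorphism $\Phi_c$ of $X_1$ over $(0,1)$ that rotates the circle and the disc compatibly (so that the $S^1$-action of weight $a_+$ upstairs becomes the $S^1$-action of weight $a_++c$; explicitly $\Phi_c$ is built from the flow of the generating vector field of the $S^1$-action combined with an $r$-dependent twist, chosen to be symplectic for $\omega_1$). One checks that $\Phi_c$ preserves $\mu_1$ up to the affine change $\abs z^2\mapsto \abs z^2$, $r\mapsto r$, carries the defining open sets $V_i^1$ for $(a,b)$ to those for $(a+c,b+c)$ (the non-integrality conditions $r+a_+\abs z^2\notin\Z$ are preserved because $\Phi_c$ acts by isometries of the fibers and the relevant moment-map-type function transforms by the shear), and intertwines the $T^2$- and $S^1$-actions appropriately. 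Transporting $(g_1,J_1)$ (again allowed by the remark), $(L_1,\nabla^{L_1})$, $W_1$, the compatible system, the partition of unity and $D_t$ along $\Phi_c$, we get an isomorphism of data, hence equality of local indices by well-definedness of $\ind(M,V,W)$ in Definition~\ref{definition for general case}.

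The main obstacle I expect is bookkeeping rather than conceptual: one must verify carefully that the candidate diffeomorphisms ($\tau$ and $\Phi_c$) genuinely send \emph{all} pieces of the good compatible fibration --- the torus actions \eqref{action1}, \eqref{action2}, the quotient maps $\pi_i^0,\pi_i^1$, the transition maps $p_{\alpha\beta}^\alpha$, the compatible metric and connection, and the compatible Clifford module structure on $W$ --- to the corresponding pieces of the target data, and that the strong acyclicity and Condition~(TI) (after the deformation of Proposition~\ref{deformation}) are respected. For $\Phi_c$ the delicate point is writing down the twist so that it is simultaneously a symplectomorphism and matches the weight change $a_\pm\mapsto a_\pm+c$; for $\tau$ the only subtlety is the relabelling of the ordered index set, which one must check does not affect the good-covering hypotheses. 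Once these compatibilities are in hand, deformation invariance (Theorem~\ref{deformation invariance}) and the excision/well-definedness of the local index finish the argument immediately, with no further computation.
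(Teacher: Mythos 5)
Your proof of the first identity is fine and is exactly the ``similar'' argument the paper leaves to the reader: the swap $\tau(z_1,z_2)=(z_2,z_1)$ is a symplectomorphism of $(X_0,\omega_0)$ carrying the data for $(b,a)$ to the data for $(a,b)$, and transporting everything (with a $\tau$-averaged Hermitian structure) identifies the two local indices. The gap is in the second identity. You insist that $\Phi_c$ be simultaneously (i) a symplectomorphism of $(X_1,\omega_1)$, (ii) fibered over the identity of the base (``$r\mapsto r$, $\abs{z}^2\mapsto\abs{z}^2$''), and (iii) weight-changing, i.e.\ carrying weight-$a_\pm$ orbits to weight-$(a_\pm+c)$ orbits. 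No such map exists: writing $\omega_1=dr\wedge d\theta+ds\wedge d\phi$ in action--angle coordinates ($s=\abs{z}^2$), any fiber-preserving symplectomorphism covering the identity must satisfy $\partial f_i/\partial\theta_j=\delta_{ij}$, hence is a fiberwise translation and acts trivially on $H_1$ of the fiber tori, whereas the weight change $a\mapsto a+c$ requires the nontrivial automorphism $\begin{pmatrix}1&0\\ c&1\end{pmatrix}$ for $c\neq 0$. So the ``delicate point'' you flag cannot be resolved in the form you state. Relatedly, your parenthetical claim that the non-integrality loci are preserved fails for the natural base-preserving candidate $\varphi(r,u,z)=(r,u,u^cz)$: it sends $\{r+a\abs{z}^2\notin\Z\}$ to $\{r+a\abs{w}^2\notin\Z\}$, not to the set $\{r+(a+c)\abs{w}^2\notin\Z\}$ that defines $V^1_1$ for the new parameters.

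The paper resolves precisely this point by \emph{not} asking $\varphi$ to be symplectic: it uses $\varphi(r,u,z)=(r,u,u^cz)$, which preserves $X_1$ and $\mu_1$ and intertwines the weight-$a_\pm$ and weight-$(a_\pm+c)$ circle actions, transports \emph{all} the data (including $\omega_1$ and $\nabla^{L_1}$) so that the transported data tautologically has local index $RR_1(a,b)$, and then deforms $(\varphi^{-1})^*\omega_1$ and $(\varphi^{-1})^*\nabla^{L_1}$ back to $\omega_1$ and $\nabla^{L_1}$ by a linear deformation, invoking deformation invariance of the local index. That deformation step is exactly what compensates for the mismatch of symplectic forms, connections and acyclicity loci, and it is absent from your argument (you invoke deformation invariance only for the Hermitian structure). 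Alternatively, you could salvage a symplectic route by letting the base shear: $(r,u,z)\mapsto(r-c\abs{z}^2,u,u^cz)$ is a genuine symplectomorphism and carries $\{r+a\abs{z}^2\notin\Z\}$ onto $\{r'+(a+c)\abs{z'}^2\notin\Z\}$, but it does not map $X_1$ onto itself, so you would then need the excision property to compare with $RR_1(a+c,b+c)$ on a neighborhood of the singular circle $\mu_1^{-1}(r_1,0)$. As written, the proof of $RR_1(a,b)=RR_1(a+c,b+c)$ has a genuine gap.
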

\begin{proof}
We prove the latter equation. The proof of the former equation is similar.  Let $\varphi \colon (0,1)\times S^1\times D\to (0,1)\times S^1\times D$ be the diffeomorphism which is defined by
\[
\varphi (r,u,z)=(r,u,u^cz). 
\]
On the target space of $\varphi$ we consider the same compatible fibration as $\{ \pi^1_i\}_{i=0, 1 ,2}$ except that $a$ and $b$ are replaced by $a+c$ and $b+c$, respectively. Then $\varphi$ induces an isomorphism between compatible fibrations. 

As the other data on the target space of $\varphi$ we consider the data which are induced from those on the source space by $\varphi^{-1}$. Then the local index for the induced data on the target space is nothing but $RR_1(a,b)$. 

On the other hand, the data $(\varphi^{-1})^*\omega_1$ and $(\varphi^{-1})^*\nabla^{L_1}$ can be deformed to $\omega_1$ and $\nabla^{L_1}$ by linear deformations. Since the local index is invariant under continuous deformation this implies that the latter equation. \
\end{proof}

Moreover, we can also show the following lemma by Theorem~\ref{excision1}. 
\begin{lemma}\label{relation2}
\[
RR_0(a,b) = RR_0(a',b) + RR_1(a',a),
\qquad
RR_1(a,c) = RR_1(a,b) + RR_1(b,c).
\]
\end{lemma}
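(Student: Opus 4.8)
The plan is to prove both identities by a single mechanism: split a transition between two structures, recorded by a parameter at each ``end'', into two transitions by inserting a third structure in between; evaluate the resulting local index by the excision theorem (Theorem~\ref{exision}) as a sum of the two pieces; and then recognize the original transition by collapsing the inserted structure and invoking deformation invariance.

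For the second identity $RR_1(a,c)=RR_1(a,b)+RR_1(b,c)$ I would work on $X_1=(0,1)\times S^1\times D$. Fix $0<s_1<s_2<1$ and build on $X_1$ the compatible fibration given by the $T^2$-quotient piece $\pi^1_0$ on $\{z\neq 0\}$, the weight-$a$ $S^1$-quotient piece on $\{r>s_2,\ r+a|z|^2\notin\Z\}$, the weight-$b$ piece on $\{s_1<r<s_2,\ r+b|z|^2\notin\Z\}$, and the weight-$c$ piece on $\{r<s_1,\ r+c|z|^2\notin\Z\}$, equipped with the de Rham operators along the fibers and the prequantum data of Section~\ref{singularnonBS}. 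Exactly as there (using Lemma~\ref{vanishing of cohomology}, Assumption~\ref{assumption1} and Proposition~\ref{acyclic=strong}) one checks that this is a good strongly acyclic compatible system whose singular locus is $\mu_1^{-1}(s_1,0)\sqcup\mu_1^{-1}(s_2,0)$. A saturated tubular neighborhood of $\mu_1^{-1}(s_1,0)$ is, after rescaling the $r$-interval, isomorphic as data to the model defining $RR_1(b,c)$, and one of $\mu_1^{-1}(s_2,0)$ to the model defining $RR_1(a,b)$; so Theorem~\ref{exision}, excising to disjoint tubular neighborhoods of the two circles, together with additivity of the local index over disjoint unions (Proposition~\ref{gluing prop}), gives that the local index of this modified data equals $RR_1(a,b)+RR_1(b,c)$. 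On the other hand, letting $s_2\to s_1$ is a deformation of the data inside a fixed compact neighborhood of the singular set; in the limit the weight-$b$ band vanishes and the two circles merge into the single circle $\mu_1^{-1}(s_1,0)$ with weight $a$ for $r>s_1$ and weight $c$ for $r<s_1$, i.e.\ the model for $RR_1(a,c)$. By deformation invariance of the local index (Theorem~\ref{deformation invariance} and Theorem~\ref{generalized deformation invariance}) the index is unchanged along this deformation, hence also equals $RR_1(a,c)$; comparing the two evaluations gives the identity.

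The first identity $RR_0(a,b)=RR_0(a',b)+RR_1(a',a)$ is handled the same way, now on $X_0$, using the isometry $(z_1,z_2)\mapsto(|z_1|^2,\ z_1/|z_1|,\ z_2)$ identifying a saturated neighborhood of the edge $\{z_2=0\}\setminus\{(0,0)\}$ with a piece of the $X_1$-model, with the convention $|z_1|^2\leftrightarrow 1-r$ so that the $r\to 1$ end of the model is the one approaching the corner. One modifies the weight on the edge $\{z_2=0\}$ to be $a'$ near the corner and $a$ away from it, with a transition circle $C=\{|z_1|^2=\rho,\ z_2=0\}$; the singular locus becomes $\{(0,0)\}\sqcup C$, a neighborhood of $(0,0)$ carries the model for $RR_0(a',b)$ and a neighborhood of $C$ (with weight $a'$ on the corner side and $a$ on the far side) the model for $RR_1(a',a)$, so Theorem~\ref{exision} gives local index $RR_0(a',b)+RR_1(a',a)$; collapsing $\rho\to 0$ is a compactly supported deformation whose limit carries the model for $RR_0(a,b)$, so the same index equals $RR_0(a,b)$.

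The main obstacle is the degenerating deformation (collapsing the weight-$b$ band as $s_2\to s_1$, resp.\ $\rho\to 0$): one must check that the local index really is constant along it, which means choosing the admissible partition of unity and the cut-off functions of Section~\ref{formulation of index} so that they stay controlled as the middle band collapses and the singular components coalesce, and then leaning on the uniform estimates behind Theorem~\ref{generalized deformation invariance} rather than on naive continuity. A secondary point, needed for the construction itself, is to verify that the three weight bands can be arranged to overlap only through the $T^2$-piece, so that at every point at most one weight-circle is present and the commuting-projections hypothesis behind Proposition~\ref{acyclic=strong} and condition~$5'$ of Definition~\ref{Good compatible fibration} holds; this is exactly why the intervals above were chosen disjoint in $r$.
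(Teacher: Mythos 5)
Your first half is sound and is surely what the paper's terse ``by Theorem~\ref{exision}'' has in mind: inserting the extra weight band(s), checking strong acyclicity, and evaluating the resulting local index by excision to disjoint neighborhoods of the singular components plus additivity does give $RR_1(a,b)+RR_1(b,c)$, resp.\ $RR_0(a',b)+RR_1(a',a)$ (up to the convention caveat below). The genuine gap is your second evaluation. You identify the auxiliary index with $RR_1(a,c)$ (resp.\ $RR_0(a,b)$) by collapsing the middle band, $s_2\to s_1$ (resp.\ $\rho\to 0$), and appealing to deformation invariance. Neither Theorem~\ref{deformation invariance} nor Theorem~\ref{generalized deformation invariance} covers such a degeneration: Assumption~\ref{global continuity} requires the whole family to satisfy Assumption~\ref{assumption for operator} with a \emph{common} open set $V$ and a \emph{common} $\lambda_0$, and Assumption~\ref{assumption gluing} requires $\iota_i(M\setminus V)=M_i\setminus V_i$. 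In your family the complement $M\setminus V$ changes discontinuously at the limit (two circles merge into one; the circle is absorbed into the corner fiber), the weight-$b$ (resp.\ weight-$a'$) band and its piece of the admissible partition of unity disappear, $|d\rho_\alpha|$ blows up on the shrinking band, and with it the constants in Proposition~\ref{KeyProp-local} (the required size of $t$, hence the attainable spectral gap) degenerate, so the operators do not even converge to the limit model's Witten-deformed operator in the sense the theorems need. You name this as ``the main obstacle'' and propose to ``lean on the uniform estimates,'' but no such uniform estimates are supplied, in the paper or in your proposal; as written the step is unproved.

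The collapse is also unnecessary: the comparison can be made with no limit at all, using the invariance of the local index under shrinking $V$ to an admissible open subset with compact complement (the theorem stated immediately after Theorem~\ref{exision}). For the second identity, delete the whole middle weight-$b$ band from your three-band data, and delete the portions with $s_1<r<s_2$ of the two bands of the $RR_1(a,c)$ data (taking its transition point in $[s_1,s_2]$); both shrink to literally the same data, whose singular set is the compact annulus $[s_1,s_2]\times S^1\times\{0\}$, so the two local indices coincide and the identity follows. For the first identity, delete the inner weight-$a'$ region from your modified data and delete $\{|z_1|^2<\rho\}$ from the weight-$a$ region of the $RR_0(a,b)$ data; both become the same data with singular set the closed disc $\{|z_1|^2\le\rho,\ z_2=0\}$. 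A secondary caveat: your identification $|z_1|^2\leftrightarrow 1-r$ of a tubular neighborhood of the transition circle with the $X_1$ model reverses the sign of $dr\wedge d\theta$ unless you also reverse the angular coordinate, which then alters the weights of the induced circle actions; the direct symplectic identification $r=|z_1|^2$, $u=z_1/|z_1|$ puts weight $a$ on the $r>\rho$ side. So the bookkeeping that the circle contributes $RR_1$ with exactly the order of arguments stated in the lemma has to be carried out carefully; this affects only conventions, not the method, but it should not be waved through.
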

Then we can calculate $RR_0(a_1,a_2)$ and $RR_1(a_+,a_-)$. 
\begin{theorem}\label{calculation}
$$RR_0(a_1,a_2)=1,\qquad RR_1(a_+,a_-)=0.$$
\end{theorem}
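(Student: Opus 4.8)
The plan is to use the two relations of Lemmas~\ref{relation1} and \ref{relation2} to reduce everything to two base values, and then to evaluate those by a global localization argument and by the product formula. By Lemma~\ref{relation1} the number $RR_1(a,b)$ depends only on $a-b$; write $RR_1(a,b)=\phi(a-b)$. The relation $RR_1(a,c)=RR_1(a,b)+RR_1(b,c)$ of Lemma~\ref{relation2} says precisely that $\phi\colon\Z\to\Z$ is additive, so $\phi(n)=n\,\phi(1)$; in particular $\phi(0)=0$ and $\phi(-n)=-\phi(n)$. Substituting into $RR_0(a,b)=RR_0(a',b)+RR_1(a',a)$ (Lemma~\ref{relation2}) with $a'=0$ gives $RR_0(a,b)=RR_0(0,b)-\phi(a)$, and using the symmetry $RR_0(a,b)=RR_0(b,a)$ of Lemma~\ref{relation1} once more,
\[
RR_0(a,b)=RR_0(0,0)-\phi(a)-\phi(b)=RR_0(0,0)-(a+b)\,\phi(1).
\]
Thus the theorem is equivalent to the two statements $\phi(1)=RR_1(1,0)=0$ and $RR_0(0,0)=1$.

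\textbf{The value $RR_0(0,0)=1$.} I would realize the data defining $RR_0(0,0)$ as the local model at a vertex of a closed toric surface and invoke the localization theorem (Theorem~\ref{localization}). Take $M=\CP^1\times\CP^1$ with its standard toric structure and $L=\mathcal{O}(1,1)$; then the index of the twisted Dolbeault operator is the Riemann--Roch number $\chi(M,L)=\dim H^0(M,L)=4$ (higher cohomology vanishing). The moment polytope is the unit square, whose only lattice points are its four vertices, so the only fibres over which the relevant local system has nonvanishing cohomology are the four vertex fibres; on the complement $V$ of these four points the torus-quotient fibrations form a good strongly acyclic compatible system (the ``$|z_1|>|z_2|$'' split in the definition of $RR_0$ is exactly what makes such a fibration good). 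Each vertex of the square is a standard smooth vertex, and a direct identification matches the induced data near it with the data defining $RR_0(0,0)$ (so with $a_1=a_2=0$). Theorem~\ref{localization}, applied with four small disjoint balls around the vertices, gives $4=\chi(M,L)=4\,RR_0(0,0)$, hence $RR_0(0,0)=1$.

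\textbf{The value $RR_1(1,0)=0$.} On $X_1=(0,1)\times S^1\times D$ the circle action $s\cdot(r,u,z)=(r,su,z)$ rotating the $S^1$-factor is free, and one checks directly that it carries fibres of $\pi^1_0,\pi^1_1,\pi^1_2$ to fibres of the same map (it lies inside the $T^2$ used for $\pi^1_0$ and commutes with the twisted $S^1$-actions used for $\pi^1_1,\pi^1_2$), hence it preserves all the data, and the singular circle $C=\mu_1^{-1}(r_1,0)$ is one of its free orbits. By excision (Theorem~\ref{exision}) we may shrink $X_1$ to a small invariant neighbourhood of $C$; there, continuity of the holonomy of $\nabla^{L_1}$ along the orbit circles, together with Assumption~\ref{assumption1} at $b=(r_1,0)$ (which asserts that the holonomy around $C$ is nontrivial), shows that $L_1$ has nontrivial holonomy along every orbit circle. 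We are then in the setting of the product formula (Theorem~\ref{product}) with $G=S^1$, $P=X_1\to X_1/S^1$, and fibre $M_1=S^1$: the fibrewise operator $D_1$ on $M_1=S^1$ is a de Rham operator twisted by a flat line bundle with nontrivial holonomy, hence $\ker D_1=0$; so $\underline E^0=\underline E^1=0$ and Theorem~\ref{product} yields $RR_1(1,0)=\ind(M_0,V_0,W_0\otimes\underline E^0)-\ind(M_0,V_0,W_0\otimes\underline E^1)=0$.

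Together these give $RR_1\equiv 0$ and $RR_0\equiv 1$. I expect the last step to be the main obstacle: one must check carefully that the free circle action on $X_1$, together with the compatible fibration, fits the product-structure hypotheses of Section~5 after deforming to a generalized cylindrical end satisfying Condition~(TI), and that the compatible fibration induced on the quotient $X_1/S^1$ has compact singular locus --- it degenerates to the single point lying over $(r_1,0)$. The analogous bookkeeping in the localization step (identifying the vertex data of $\CP^1\times\CP^1$ with the definition of $RR_0(0,0)$) is routine by comparison.
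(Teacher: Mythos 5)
Your algebraic reduction via Lemmas~\ref{relation1} and \ref{relation2} is correct, and your computation $RR_0(0,0)=1$ by localizing the Riemann--Roch number $4$ of $\CP^1\times\CP^1$ at the four vertex fibers is literally one of the two arguments the paper gives for that value. The gap is in the step $RR_1(1,0)=0$. You assert that, after excising down to an invariant neighbourhood of $C=\mu_1^{-1}(r_1,0)$, you are ``in the setting of the product formula'' with $G=S^1$, $P=X_1\to X_1/S^1$ and fibre $M_1=S^1$. But Theorem~\ref{product} requires the data on $M$ to be literally of the product form of Section~5: $W=\pi^*W_0\otimes (P\times_G W_1)$ with a \emph{fixed} $G$-equivariant Clifford module $W_1\to M_1$ and a fixed $G$-invariant fibrewise operator $D_1$, so that the fibrewise flat twisting is the same on every fibre of $\pi\colon M\to M_0$. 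The data defining $RR_1$ is not of this form: the curvature $-2\pi\sqrt{-1}\,\omega_1$ of $\nabla^{L_1}$ contains the term $dr\wedge d\theta$, which is not basic for the rotation action, so $(L_1,\nabla^{L_1})$ cannot be written as a pullback from $X_1/S^1$ tensored with an associated bundle; concretely, the holonomy of $\nabla^{L_1}$ along the rotation orbit through $(r,u,z)$ equals $e^{-2\pi\sqrt{-1}r}$ (up to sign conventions) and varies with the base point, whereas in the product setting it would be constant. Hence Theorem~\ref{product} does not apply as stated. To complete your argument you would need either an extra deformation of the connection and of the compatible system (keeping strong acyclicity and Condition~(TI), and within the deformation-invariance framework of Section~3) bringing the data to product form, or an independent vanishing argument, e.g.\ extending a strongly acyclic system across $C$ using the free rotation orbits (all of which have nontrivial holonomy) and invoking Lemma~\ref{vanishing lemma}; in either case one must also justify comparing the new system with the one defining $RR_1$, since the local index is attached to a given compatible system. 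None of this is supplied, and it is precisely the point you flag as ``the main obstacle'' without identifying the actual obstruction.

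The paper avoids this issue entirely: it never computes $RR_1$ directly. Besides $RR_0(0,0)=1$ it computes $RR_0(0,1)=1$ by localizing the Riemann--Roch number $3$ of $\CP^2$ (with the hyperplane bundle) at its three fixed points via Theorem~\ref{localization} --- a closed-manifold computation with no open-manifold product step --- and then $RR_1\equiv 0$ and $RR_0\equiv 1$ follow from exactly the relations you use. So your route would be a genuinely different (and attractive) direct vanishing argument for $RR_1$ if the product/vanishing step were completed, but as written that step is a genuine gap; the simplest repair is to replace it by the paper's $\CP^2$ computation of $RR_0(0,1)$.
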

\begin{proof}
We show $RR_0(0,1)=1$ and $RR_0(0,0)=1$. Then the theorem follows from these equalities and Lemma~\ref{relation1} and ~\ref{relation2}. 

First we show $RR_0(0,1)=1$. Let us consider the standard toric action on $\CP^2$ with hyperplane bundle as a prequantizing line bundle. We adopt the moment map $\mu$ of this action as a singular Lagrangian fibration. The image $B$ of $\mu$ is the triangle in $\R^2$ with vertices $(0,0)$, $(1,0)$, $(0,1)$, and $\mu$ has three Bohr-Sommerfeld fibers which corresponds one-to-one to three fixed points $[1:0:0],[0:1:0],[0:0:1]$ of the toric action. 

We construct a compatible fibration on $\C P^2\setminus \{[1:0:0],[0:1:0],[0:0:1]\}$. For each $k \in \Z/3$ let $V_k$ be a pairwise disjoint $T^2$-invariant open neighborhood of $\{[z_0:z_1:z_2]\in \C P^2\mid z_k=0\}\setminus \{[1:0:0],[0:1:0],[0:0:1]\}$, and $G_k$ the stabilizer of $\{[z_0:z_1:z_2]\in \C P^2\mid z_k=0\}\setminus \{[1:0:0],[0:1:0],[0:0:1]\}$. Each $G_k$ is a circle subgroup in $T^2$ and $G_{k-1}$ acts on $V_k$ freely. Then we put $U_k:=V_k/G_{k-1}$ and define $\pi_k\colon V_k\to U_k$ to be the quotient map. We also put $V_4:=U_4:=B\setminus \partial B$ and define $\pi_4\colon V_4\to U_4$ to be the identity map. These data define a good compatible fibration on $\C P^2\setminus \{[1:0:0],[0:1:0],[0:0:1]\}$. 

The $\Z_2$-graded Clifford module bundle and the acyclic compatible system are defined by the same way as in Section~\ref{singularBSinD}. 

Then by Theorem~\ref{localization formula} the Riemann-Roch number is localized at $[1:0:0],[0:1:0],[0:0:1]$, and the contribution of each fixed point is equal to $RR_0(0,1)$. 

On the other hand it is well-known that the Riemann-Roch number of $\C P^2$ is $3$. Thus we obtain $RR_0(0,1)=1$. 

Next we show $RR_0(0,0)=1$. It is a direct consequence of the product formula Theorem~\ref{product} and the fact $[D^+]=1$(see \cite[Theorem~6.7]{Fujita-Furuta-Yoshida}). 

We can also show $RR_0(0,0)=1$ in the following way. We consider $\C P^1 \times \C P^1$ with standard toric action. The image of the moment map is a square. By the similar construction as above the Riemann-Roch number is localized at four vertices and the contribution of any vertex is $RR_0(0,0)$. On the other hand the Riemann-Roch number of $\C P^1 \times \C P^1$ is four. This implies $RR_0(0,0)=1$. \
\end{proof}

\subsection{Application to locally toric Lagrangian fibrations}

\subsubsection{Locally toric Lagrangian fibrations}
Let $\omega_{\C^n}$ be the standard symplectic structure on $\C^n$ 
\begin{equation*}\label{om_C}
\omega_{\C^n}:=\frac{\sqrt{-1}}{2\pi}\sum_{k=1}^ndz_k\wedge d\overline{z}_k.
\end{equation*}
The standard action of $T^n$ on $\C^n$ preserves $\omega_{\C^n}$ and the map $\mu_{\C^n}\colon \C^n\to \R^n$ which is defined by 
\begin{equation*}\label{std-moment}
\mu_{\C^n}(z):=\left(\abs{z_1}^2, \ldots ,\abs{z_n}^2\right)
\end{equation*}
for $z=(z_1, \ldots ,z_n)\in \C^n$ is a moment map of the standard $T^n$-action. Note that the image of $\mu_{\C^n}$ is the $n$-dimensional standard positive cone 
\begin{equation*}\label{positive-cone}
\R^n_+:=\{ r =(r_1, \ldots ,r_n)\in \R^n \colon r_i\ge 0\ i=1, \ldots ,n\} . 
\end{equation*}

Let $(M,\omega)$ be a $2n$-dimensional symplectic manifold and $B$ an $n$-dimensional manifold with corners. 
\begin{definition}[\cite{Hamilton,Yoshida}]\label{locallytoricLagrangianfibration}
A map $\mu \colon (M, \omega )\to B$ is called a {\it locally toric Lagrangian fibration} if there exists a system $\{ (U_{\alpha},\varphi^B_{\alpha})\}$ of coordinate neighborhoods of $B$ modeled on $\R^n_+$, and for each $\alpha$ there exists a symplectomorphism $\varphi^M_{\alpha}\colon (\mu^{-1}(U_{\alpha}), \omega )\to (\mu_{\C^n}^{-1}(\varphi^B_{\alpha}(U_{\alpha})), \omega_{\C^n})$ such that $\mu_{\C^n}\circ \varphi^M_{\alpha}=\varphi^B_{\alpha}\circ \mu$. 
\end{definition}

Note that a locally toric Lagrangian fibration is a singular Lagrangian fibration that allows only elliptic singularities. 

By the definition of a manifold with corners, $B$ is equipped with a natural stratification. We denote by ${\mathcal S}^{(k)}B$ the $k$-dimensional part of $B$, namely, ${\mathcal S}^{(k)}B$ consists of those points which have exactly $k$ nonzero components in a local coordinate system. Then, it is easy to see that the fiber of $\mu$ at a point in ${\mathcal S}^{(k)}B$ is a $k$-dimensional torus. In particular, all fibers of $\mu$ are smooth.  

\begin{example}[Projective toric variety]
The moment map of a nonsingular projective toric variety is a locally toric Lagrangian fibration. 
\end{example}
\begin{example}[Non toric example]\label{nontoricex}
Let $c\in \N$ be a positive integer. We consider the diagonal Hamiltonian $S^1$-action on $(\C^2,\omega_{\C^2})$ with moment map 
\[
\Phi(z):=\norm{z}^2-c. 
\]
It is well-known that the symplectic quotient $\left(\Phi^{-1}(0),\omega_{\C^2}|_{\Phi^{-1}(0)}\right)/S^1$ is $\C P^1$ with $c$ times Fubini-Study form $\omega_{FS}$. In the rest of this example we identify $(\C P^1,c\omega_{FS})$ with $\left(\Phi^{-1}(0),\omega_{\C^2}|_{\Phi^{-1}(0)}\right)/S^1$. 

Let $\tilde{\mu}\colon (\tilde{M},\tilde{\omega} )\to \tilde{B}$ be the singular Lagrangian fibration which is defined by
\[
\begin{split}
&(\tilde{M},\tilde{\omega} ):= (\R\times S^1\times \C P^1, dr \wedge d\theta \oplus c\omega_{FS}),\\
&\tilde{B}:=\R\times [0,c],\\
&\tilde{\mu}(r,u,[z_0:z_1]):=(r,\abs{z_1}^2), 
\end{split}
\]
where we use the coordinate $(r,e^{2\pi \sqrt{-1}\theta})\in \R \times S^1$. For a negative integer $a\in \Z$ ($a<0$) and a positive integer $b\in \N$, we define the $\Z$-actions on $\tilde{M}$ and $\tilde{B}$ by
\begin{align}
&n(r,u,[z_0:z_1]):=\left( r+n(-a\abs{z_1}^2+b),u,[z_0:u^{na}z_1]\right)\label{ZonM},\\
&n(r_1,r_2):=(r_1+n(-ar_2+b), r_2)\label{ZonB}. 
\end{align}
It is easy to see that \eqref{ZonM} and \eqref{ZonB} are free $\Z$-actions and \eqref{ZonM} preserves $\tilde{\omega}$. Then we put 
\[
\begin{split}
&(M,\omega):= (\tilde{M},\tilde{\omega} )/\Z ,\\
&B:=\tilde{B}/\Z . 
\end{split}
\]
It is also easy to see that $\tilde{\mu}$ is equivariant with respect to \eqref{ZonM} and \eqref{ZonB}. Hence $\tilde{\mu}$ induces the map from $M$ to $B$ which we denote by $\mu\colon (M,\omega)\to B$. By construction, $B$ is a cylinder and $\mu$ is a locally toric Lagrangian fibration which has singular fibers on $\partial B$. 
\end{example}

Let $\mu \colon (M^{2n}, \omega )\to B$ be a locally toric Lagrangian fibration. By definition, for each $\alpha$ there is a symplectomorphism $\varphi^M_\alpha\colon \mu^{-1}(U_\alpha)\to \mu_{\C^n}^{-1}(\varphi^B_\alpha(U_\alpha))$, and $\mu_{\C^n}^{-1}(\varphi^B_\alpha(U_\alpha))$ has a $T^n$-action which is obtained by restricting the standard $T^n$-action on $\C^n$. Then, it is known by \cite[Proposition~3.13]{Yoshida} that on each nonempty overlap $U_\alpha\cap U_\beta$ there exists an automorphism $\rho_{\alpha\beta}\in \Aut (T^n)$ of $T^n$ such that $\varphi^M_{\alpha\beta}:=\varphi^M_\alpha \circ (\varphi^M_\beta)^{-1}$ is $\rho_{\alpha \beta}$-equivariant, namely, 
\[
\varphi^M_{\alpha\beta}(tx)=\rho_{\alpha\beta}(t)\varphi^M_{\alpha\beta}(x)
\]
for $t\in T^2$ and $x\in \mu_{\C^n}^{-1}(\varphi^B_\beta(U_\alpha\cap U_\beta))$. Moreover,we can show that $\rho_{\alpha\beta}$'s form a \v{C}ech one-cocycle $\{ \rho_{\alpha\beta}\}$ on $\{U_\alpha\}$ with coefficients in $\Aut(T^n)$. Hence it defines an element $[\{ \rho_{\alpha\beta}\}]$ in the \v{C}ech cohomology $H^1(B;\Aut (T^n))$. Then we have the following lemma. 
\begin{lemma}[\cite{Yoshida}]\label{obstruction}
The \v{C}ech cohomology class  $[\{ \rho_{\alpha\beta}\}]$ is the obstruction class in order that the $T^n$-actions on $\mu_{\C^n}^{-1}(\varphi^B_\alpha(U_\alpha))$ for all $\alpha$ can be patched together to obtain a global $T^n$-action on $M$. 
\end{lemma}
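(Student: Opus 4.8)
The plan is to read the \v{C}ech class $[\{\rho_{\alpha\beta}\}]$ off the comparison of the \emph{local} torus actions supplied by the model charts. For each $\alpha$, transporting the standard $T^n$-action on the model $\mu_{\C^n}^{-1}(\varphi^B_\alpha(U_\alpha))$ through the symplectomorphism $\varphi^M_\alpha$ gives a $T^n$-action $a_\alpha$ on $\mu^{-1}(U_\alpha)$ that preserves $\mu$ and acts along its fibres. The $\rho_{\alpha\beta}$-equivariance of $\varphi^M_{\alpha\beta}=\varphi^M_\alpha\circ(\varphi^M_\beta)^{-1}$ translates at once into the relation $a_\beta(t,x)=a_\alpha(\rho_{\alpha\beta}(t),x)$ for $x\in\mu^{-1}(U_\alpha\cap U_\beta)$. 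Since the action $a_\alpha$ attached to a chart is only canonical up to a reparametrisation of $T^n$ by an element of $\Aut(T^n)$, a global $T^n$-action compatible with $\mu$ amounts to a choice of $\tau_\alpha\in\Aut(T^n)$ for which the reparametrised actions $a'_\alpha$, defined by $a'_\alpha(t,x)=a_\alpha(\tau_\alpha(t),x)$, agree on all overlaps. Thus the lemma is equivalent to: such $\tau_\alpha$ exist if and only if $\rho_{\alpha\beta}=\tau_\alpha\tau_\beta^{-1}$, i.e. if and only if the cocycle $\{\rho_{\alpha\beta}\}$ is a \v{C}ech coboundary, which is to say $[\{\rho_{\alpha\beta}\}]$ is the distinguished (trivial) class of $H^1(B;\Aut(T^n))$.

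For the implication that vanishing of $[\{\rho_{\alpha\beta}\}]$ suffices, suppose $\rho_{\alpha\beta}=\tau_\alpha\tau_\beta^{-1}$ with $\tau_\alpha\in\Aut(T^n)$. Using $a_\beta(t,x)=a_\alpha(\rho_{\alpha\beta}(t),x)$ one checks in one line that $a'_\beta(t,x)=a_\alpha(\rho_{\alpha\beta}\tau_\beta(t),x)=a_\alpha(\tau_\alpha(t),x)=a'_\alpha(t,x)$ on $\mu^{-1}(U_\alpha\cap U_\beta)$. Hence the $a'_\alpha$ patch to a single smooth $T^n$-action $a$ on $M$; since each $a'_\alpha$ acts along the fibres of $\mu$ and is locally modelled on the standard toric picture, $a$ is the desired global $T^n$-action on $M$.

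For the converse, assume a global $T^n$-action $a$ on $M$ compatible with the local models is given. Restricting to $\mathcal{S}^{(n)}B\cap U_\alpha$, both $a$ and $a_\alpha$ exhibit $\mu$ as a (trivial) principal $T^n$-bundle; a short algebraic argument shows that any two such fibrewise $T^n$-structures differ by an automorphism of $T^n$ with no translational part, and since this automorphism is a continuous, hence constant, function of the base on the connected chart, we obtain a constant $\tau_\alpha\in\Aut(T^n)$ with $a(t,x)=a_\alpha(\tau_\alpha(t),x)$ on $\mathcal{S}^{(n)}B\cap U_\alpha$, and then on all of $\mu^{-1}(U_\alpha)$ by continuity and density. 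Comparing the two descriptions of $a$ over $U_\alpha\cap U_\beta$ with the relation $a_\beta(t,x)=a_\alpha(\rho_{\alpha\beta}(t),x)$ forces $\tau_\alpha=\rho_{\alpha\beta}\tau_\beta$, i.e. $\rho_{\alpha\beta}=\tau_\alpha\tau_\beta^{-1}$; hence $\{\rho_{\alpha\beta}\}$ is a coboundary and $[\{\rho_{\alpha\beta}\}]$ is trivial.

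The step I expect to be the main obstacle is the comparison of the two fibrewise torus actions in the converse direction: one must verify that the discrepancy between $a$ and $a_\alpha$ is a genuine automorphism of $T^n$ with no residual translational component (this is where the honest $\rho_{\alpha\beta}$-equivariance of the $\varphi^M_{\alpha\beta}$ from \cite{Yoshida} — rather than equivariance merely up to translation — together with the elementary fact that a $T^n$-action on $T^n$ along itself carries no fixed translation is used), that it is constant on the connected chart $U_\alpha$ because $\Aut(T^n)\cong GL_n(\Z)$ is discrete, and that it extends across the singular strata of $\mu$. A secondary, essentially bookkeeping point is that $\Aut(T^n)$ is non-abelian, so $H^1(B;\Aut(T^n))$ is only a pointed set and ``obstruction class'' has to be read as ``equals the trivial class''; the cocycle identity $\rho_{\alpha\beta}\rho_{\beta\gamma}=\rho_{\alpha\gamma}$ recorded just above the statement is precisely what makes this meaningful, and the computation in the first implication shows there is no higher obstruction once the automorphisms are matched.
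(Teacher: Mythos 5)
First, note that the paper itself offers no proof of this lemma: it is imported verbatim from \cite{Yoshida} (``For more detail see \cite{Yoshida}''), so there is no in-paper argument to compare yours against; your proposal has to stand on its own. Your setup and the sufficiency direction are fine: transporting the standard action through $\varphi^M_\alpha$ gives $a_\alpha$, the $\rho_{\alpha\beta}$-equivariance of $\varphi^M_{\alpha\beta}$ gives $a_\beta(t,x)=a_\alpha(\rho_{\alpha\beta}(t),x)$, and if $\rho_{\alpha\beta}=\tau_\alpha\tau_\beta^{-1}$ the reparametrised actions glue; conversely, if reparametrised chart actions agree on overlaps, then since each overlap meets $\mu^{-1}(\mathcal S^{(n)}B)$, where the actions are free, the identity $a_\alpha(\rho_{\alpha\beta}\tau_\beta(t),x)=a_\alpha(\tau_\alpha(t),x)$ forces $\rho_{\alpha\beta}=\tau_\alpha\tau_\beta^{-1}$. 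Under the reading of ``patched together'' as ``the chart actions, after reparametrisation by automorphisms, agree on overlaps,'' this already proves the lemma.

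The genuine gap is in the stronger converse you actually set up, namely the claim (asserted in your first paragraph and deferred to your last) that an \emph{arbitrary} global $T^n$-action compatible with $\mu$ must coincide over each chart with $a_\alpha$ up to a constant $\tau_\alpha\in\Aut(T^n)$. You locate the difficulty in excluding a ``residual translational component,'' but translations are not the problem: evaluating at $t=e$ and using freeness kills any translation automatically. The real issue is whether the comparison map $\phi$, defined at a base point of a regular fiber by $a(t,x_0)=a_\alpha(\phi(t),x_0)$, is a group homomorphism at all and satisfies $a(t,x)=a_\alpha(\phi(t),x)$ for every $x$ in the fiber. For two arbitrary fibre-preserving free transitive $T^n$-actions this fails: conjugating the standard translation action on a torus fibre by a non-affine diffeomorphism yields a free transitive action that is not of the form $a_\alpha(\tau(t),\cdot)$ for any $\tau\in\Aut(T^n)$. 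What rescues the statement is the extra structure you never invoke: the hypothetical global action must preserve $\omega$ and the fibration, and on a regular fibre of a Lagrangian fibration any such action acts by translations in the canonical Arnold--Liouville affine structure; then $a$ and $a_\alpha$ both act by translations, hence commute, hence $\phi$ is an automorphism, it is locally constant because $\Aut(T^n)\cong GL_n(\Z)$ is discrete, and it extends over the singular strata by density. Without this step (or without retreating to the weaker reading of ``patched together,'' for which your own computation suffices), the converse as you stated it is not proved.
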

For more detail see \cite{Yoshida}. 

Let $q_B\colon \tilde{B}\to B$ be the universal covering of $B$. Since the \v{C}ech cohomology $H^1(B;\Aut (T^n))$ is identified with the moduli space of representations of the fundamental group $\pi_1(B)$ of $B$ to $\Aut (T^n)$, the fiber product $q_B^*M:=\{ (\tilde{b},x)\in \tilde{B}\times M\mid q_B(\tilde{b})=\mu(x)\}$ admits a $T^n$-action. 

We take a representative $\rho \colon \pi_1(B)\to \Aut(T^n)$ of the equivalence class of representations corresponding to $[\{ \rho_{\alpha\beta}\}]$. Then the $T^n$-action on $q_B^*M$ can be written explicitly. See \cite[Lemma~3.1]{Yoshida2} for the explicit description. 

On the other hand, by the construction, $\pi_1(B)$ acts on $q_B^*M$ from the left by the inverse of the deck transformation, and it is shown that the $T^n$-action and the $\pi_1(B)$-action satisfy the following relationship
\begin{equation}\label{semidirect}
t(a\tilde{x})=a\left(\rho(a^{-1})(t)\tilde{x}\right)
\end{equation}
for $t\in T^n$, $a\in \pi_1(B)$, and $\tilde{x}\in q_B^*M$. Let $T^n\rtimes_\rho \pi_1(B)$ be the semidirect product of $T^n$ and $\pi_1(B)$ with respect to $\rho$. Then, \eqref{semidirect} implies that these actions form an action of $T^n\rtimes_\rho \pi_1(B)$ on $q_B^*M$. For more details see \cite{Yoshida2}. 

Let $q_M\colon q_B^*M\to M$ be the natural projection. Note that $q_M^*\omega$ is $T^n\rtimes_\rho\pi_1(B)$-invariant since $\omega$ is invariant under the $T^n$-action on $\mu^{-1}(U_\alpha)$ induced by the standard $T^n$-action on $\C^n$ for each $\alpha$. Now we show the following lemma. 
\begin{lemma}\label{invariantHermitianstructureonpi*M}
There exists a Hermitian structure $(\tilde{g},\tilde{J})$ on $q_B^*M$ compatible with $q_M^*\omega$ which is invariant under the action of $T^n\rtimes_\rho \pi_1(B)$. 
\end{lemma}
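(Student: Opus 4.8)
The plan is to first construct a Hermitian structure $(g,J)$ on $M$ itself, compatible with $\omega$ and invariant under \emph{all} the local toric $T^n$-actions on the pieces $\mu^{-1}(U_\alpha)$, and then pull it back along the covering $q_M\colon q_B^*M\to M$. The point to keep in mind is that although the group $T^n\rtimes_\rho\pi_1(B)$ is noncompact, so one cannot average over it directly, the $\pi_1(B)$-part acts on $q_B^*M$ as the deck group of a covering whose quotient is the \emph{compact} manifold $M$; hence it suffices to produce an invariant structure downstairs, and the only additional invariance then needed is under the \emph{compact} tori $T^n$, which can be arranged by averaging.

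First I would treat the local models. For each $\alpha$ the symplectomorphism $\varphi^M_\alpha$ of Definition~\ref{locallytoricLagrangianfibration} transports the standard $T^n$-action on $\mu_{\C^n}^{-1}(\varphi^B_\alpha(U_\alpha))$ to a symplectic $T^n$-action on $\mu^{-1}(U_\alpha)$, which I call the $\alpha$-action. Averaging an arbitrary $\omega$-compatible metric over this compact torus gives an $\omega$-compatible, $\alpha$-invariant metric $g_\alpha$ on $\mu^{-1}(U_\alpha)$. Next I would patch: choose a partition of unity $\{\chi_\alpha^2\}$ on $B$ subordinate to $\{U_\alpha\}$ and set $g_M:=\sum_\alpha (\mu^*\chi_\alpha)^2\,g_\alpha$. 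Each $\mu^*\chi_\alpha$ is constant along the $\mu$-fibers, hence invariant under every local action; and on an overlap $U_\alpha\cap U_\beta$ the $\alpha$- and $\beta$-actions differ by an automorphism of $T^n$, namely the cocycle $\rho_{\alpha\beta}$ of Lemma~\ref{obstruction}, so invariance of $g_\beta$ under the $\beta$-action there is the same as invariance under the $\alpha$-action. Using $\mu(t\cdot x)=\mu(x)$ one then checks that $g_M$ is invariant under every local $T^n$-action, and indeed under all $\Aut(T^n)$-conjugates of them. Since $g_M$ need not be $\omega$-compatible, I would finally apply the canonical map sending a metric to an $\omega$-compatible almost complex structure (polar decomposition of the endomorphism $A$ defined by $\omega(u,v)=g_M(Au,v)$): this yields $J$ together with the compatible metric $g:=\omega(\cdot,J\cdot)$, and, the construction being natural, $(g,J)$ is invariant under any diffeomorphism preserving both $\omega$ and $g_M$, in particular under every local $T^n$-action and its $\Aut(T^n)$-conjugates.

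It remains to pull $(g,J)$ back to $q_B^*M$ via $q_M$. Since the $\pi_1(B)$-action on $q_B^*M$ is by deck transformations of the covering $q_M$, the pullbacks $q_M^*g$ and $q_M^*J$ are automatically $\pi_1(B)$-invariant. For $T^n$-invariance I would use the explicit description of the $T^n$-action on $q_B^*M$: over a chart $U_\alpha$ it agrees, after pushing down by $q_M$ and using relation \eqref{semidirect}, with an $\Aut(T^n)$-conjugate of the $\alpha$-action on $\mu^{-1}(U_\alpha)$, under which $(g,J)$ was arranged to be invariant; hence $q_M^*(g,J)$ is $T^n$-invariant. As $T^n\rtimes_\rho\pi_1(B)$ is generated by $T^n$ and $\pi_1(B)$, the structure $(\tilde g,\tilde J):=(q_M^*g,q_M^*J)$ is invariant under the whole group, and it is compatible with $q_M^*\omega$ by construction.

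The main obstacle is conceptual rather than computational: one must resist trying to average over the noncompact group, and instead exploit that the desired structure descends to the compact base $M$, where the only issue is compatibility of the finitely many local torus symmetries on overlaps, which is guaranteed because $\rho_{\alpha\beta}\in\Aut(T^n)$. The one point requiring care is that "invariance under a $T^n$-action" must be understood as invariance under that action together with all its $\Aut(T^n)$-conjugates; this is precisely what makes the patched structure simultaneously compatible with the different local actions and with the twisted $T^n$-action on the various sheets of the cover.
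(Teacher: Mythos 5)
Your argument is correct, but it runs in the opposite direction from the paper's. The paper works entirely on the cover: it pulls back an arbitrary metric $g'$ from $M$, averages it over the \emph{global} $T^n$-action on $q_B^*M$, and then checks $\pi_1(B)$-invariance of the average directly from the semidirect-product relation \eqref{semidirect}, the key point being that the change of variables $t\mapsto\rho(a^{-1})(t)$ preserves Haar measure because $\det\rho(a^{-1})=\pm1$; compatibility is then restored exactly as you do, by the remark that an invariant Riemannian metric suffices. You instead first manufacture the invariant structure downstairs on $M$ (averaging over each local $\alpha$-action, patching with a partition of unity pulled back from $B$, and using that on overlaps the $\alpha$- and $\beta$-actions have the same image in $\Diff$ since they differ by $\rho_{\alpha\beta}\in\Aut(T^n)$), and only then pull back, getting $\pi_1(B)$-invariance for free from deck-invariance of $q_M$ and $T^n$-invariance from the local description of the lifted action. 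What each buys: the paper's averaging-on-the-cover trick is shorter and needs no partition of unity or overlap analysis, but must then descend to obtain Corollary~\ref{invariantHermitianstructure}; your route proves that corollary directly and derives the lemma from it, at the cost of invoking the explicit local form of the $T^n$-action on $q_B^*M$ (the same reference, \cite{Yoshida2}, the paper uses) and the overlap compatibility of the local actions. Two small points to fix in the write-up: the $T^n$-average of an $\omega$-compatible metric is in general only a metric, not again $\omega$-compatible, so drop that adjective for $g_\alpha$ (harmless, since your final polar-decomposition step restores compatibility anyway); and invariance under ``$\Aut(T^n)$-conjugates'' of a local action is automatic, since conjugation merely reparametrizes the same subgroup of diffeomorphisms --- it is worth saying this explicitly, as it is exactly what makes the lifted action on each sheet preserve $q_M^*(g,J)$.
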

\begin{proof}
It is sufficient to show that the existence of an invariant Riemannian metric. Let $g'$ be a Riemannian metric on $M$. We define the Riemannian metric $\tilde{g}$ on $q_B^*M$ by 
\[
\tilde{g}_{\tilde{x}}(u,v):=\int_{T^n}\left(\varphi^*_t(q_M^*g')\right)_{\tilde{x}}(u,v)dt, 
\]
where $\varphi_t$ implies the $T^n$-action for $t\in T^n$. It is sufficient to show that $\tilde{g}$ is $\pi_1(B)$-invariant. For $a \in \pi_1(B)$ we denote the $\pi_1(B)$-action by $\phi_a$. Then we have
\begin{align}
(\phi_a^*\tilde{g})_{\tilde{x}}(u,v)&=\int_{T^n}\left( \phi_a^*\left( \varphi_t^*(q_M^*g')\right)\right)_{\tilde{x}}(u,v)dt\nonumber \\
&=\int_{T^n}\left( \varphi_{\rho(a^{-1})(t)}^*\left( \phi_a^*(q_M^*g')\right)\right)_{\tilde{x}}(u,v)dt\nonumber \\
&=\int_{T^n}\left( \varphi_{\rho(a^{-1})(t)}^*(q_M^*g')\right)_{\tilde{x}}(u,v)dt\nonumber\\
&=\det\rho(a^{-1}) \int_{T^n}\left( \varphi_{\rho(a^{-1})(t)}^*(q_M^*g')\right)_{\tilde{x}}(u,v)\rho(a^{-1})^*dt\nonumber \\
&=\int_{T^n}\left( \varphi_t^*(q_M^*g')\right)_{\tilde{x}}(u,v)dt. \nonumber \\
&=\tilde{g}_{\tilde{x}}(u,v). \nonumber 
\end{align}
Here we remark that $\det \rho(a^{-1})=\pm 1$ since $\rho(a^{-1})\in \Aut (T^n)$. \
\end{proof}

\begin{corollary}[the existence of an invariant Hermitian structure]\label{invariantHermitianstructure}
There exists a Hermitian structure $(g,J)$ on $M$ compatible with $\omega$ such that on each $\mu^{-1}(U_\alpha)$ $(g,J)$ is invariant under the $T^n$-action on $\mu^{-1}(U_\alpha)$ which is induced from the $T^n$-action on $\mu_{\C^n}^{-1}(\varphi^B_\alpha(U_\alpha))$ with the identification $\varphi^M_\alpha$. 
\end{corollary}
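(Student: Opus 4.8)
The plan is to obtain $(g,J)$ by descending the $T^n\rtimes_\rho\pi_1(B)$-invariant Hermitian structure $(\tilde g,\tilde J)$ of Lemma~\ref{invariantHermitianstructureonpi*M} along the projection $q_M\colon q_B^*M\to M$. First I would record that $q_M$ is a covering map whose group of deck transformations is exactly $\pi_1(B)$, acting through the maps $\phi_a$, and that $M$ is the quotient $q_B^*M/\pi_1(B)$; this is immediate from the description $q_B^*M=\{(\tilde b,x)\mid q_B(\tilde b)=\mu(x)\}$ together with the fact that $q_B\colon\tilde B\to B$ is the universal covering. Since $(\tilde g,\tilde J)$ is invariant under all of $T^n\rtimes_\rho\pi_1(B)$, in particular it is invariant under the subgroup $\pi_1(B)$, i.e.\ under the deck transformation action; hence it descends to a well-defined almost complex structure $J$ and Riemannian metric $g$ on $M$.

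Next I would check compatibility with $\omega$. The manifold $q_B^*M$ carries $q_M^*\omega$, and $q_M$ is a local diffeomorphism intertwining $q_M^*\omega$ with $\omega$. Since $(\tilde g,\tilde J)$ is compatible with $q_M^*\omega$ and $\omega$-compatibility is a pointwise condition on the $2$-jet of the metric and almost complex structure, the descended pair $(g,J)$ is compatible with $\omega$.

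It remains to verify the local $T^n$-invariance. Fix $\alpha$. Because $U_\alpha$ is modelled on the convex set $\R^n_+$, it is simply connected, so $q_B^{-1}(U_\alpha)$ is a disjoint union of sheets, each mapped diffeomorphically onto $U_\alpha$ by $q_B$; choosing one sheet yields a section $s_\alpha\colon\mu^{-1}(U_\alpha)\to q_B^*M$ of $q_M$. By the construction of the $T^n$-action on the fiber product $q_B^*M$ (see \cite{Yoshida2}), the section $s_\alpha$ is $T^n$-equivariant, where $T^n$ acts on $\mu^{-1}(U_\alpha)$ through the action induced from the standard action on $\mu_{\C^n}^{-1}(\varphi^B_\alpha(U_\alpha))$ via $\varphi^M_\alpha$. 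Over $\mu^{-1}(U_\alpha)$ the descended structure equals $s_\alpha^*(\tilde g,\tilde J)$, and $(\tilde g,\tilde J)$ is $T^n$-invariant, so $(g,J)$ is invariant under this local $T^n$-action, which is the assertion of the corollary.

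The main obstacle is this last step: one has to unwind precisely how the global $T^n$-action on $q_B^*M$ is assembled from the local toric charts, so as to see that over each sheet above $U_\alpha$ it restricts to exactly the $\varphi^M_\alpha$-induced action; this is where the explicit description of the action (as in \cite{Yoshida2}) is needed. Everything else is a routine application of descent along a covering.
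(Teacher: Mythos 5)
Your proposal is correct and follows essentially the same route as the paper: the paper's proof simply invokes Lemma~\ref{invariantHermitianstructureonpi*M} and descends $(\tilde g,\tilde J)$ along $q_M$ using its $\pi_1(B)$-invariance, with your three steps spelling out the details the paper leaves implicit. Regarding the "main obstacle" you flag at the end, note that even if the restriction of the global $T^n$-action to a sheet over $U_\alpha$ differs from the $\varphi^M_\alpha$-induced action by an automorphism of $T^n$, invariance is unaffected, since the two actions have the same image in $\Diff(\mu^{-1}(U_\alpha))$; so no delicate unwinding of the construction in \cite{Yoshida2} is actually needed.
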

\begin{proof}
By Lemma~\ref{invariantHermitianstructureonpi*M} there is a $T^n\rtimes_\rho \pi_1(B)$-invariant Hermitian structure $(\tilde{g},\tilde{J})$ on $q_B^*M$ compatible with $q_M^*\omega$. In particular, since $(\tilde{g},\tilde{J})$ is $\pi_1(B)$-invariant, $(\tilde{g},\tilde{J})$ induces an $\omega$-compatible Hermitian structure on $M$ which is denoted by $(g,J)$. Then, $(g,J)$ is the required one. \
\end{proof}

\begin{lemma}[The existence of an averaging operation]\label{averagingoperationonM}
Suppose that there exists a compatible fibration $\{ \pi_\alpha\colon V_\alpha \to U_\alpha\}$ on $M$ such that for each $\alpha$ a fiber of $\pi_\alpha$ is contained in that of $\mu$, namely, $\pi_\alpha^{-1}\pi_\alpha(x)\subset \mu^{-1}\mu(x)$ for $x\in V_\alpha$. There exists an averaging operation $I\colon C^\infty(M)\to C^\infty(M)$ with respect to $\{ \pi_\alpha\colon V_\alpha \to U_\alpha\}$. 
\end{lemma}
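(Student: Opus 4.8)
The plan is to construct $I$ by averaging each function over the torus fibers of $\mu$. Recall that $\mu$ is a locally toric Lagrangian fibration, so every fiber of $\mu$ is a (possibly lower dimensional) torus carrying a canonical affine structure coming from action--angle coordinates; equivalently, fixing the locally $T^n$-invariant $\omega$-compatible Hermitian structure $(g,J)$ of Corollary~\ref{invariantHermitianstructure} makes each fiber $\mu^{-1}(\mu(x))$ a flat translation-invariant torus. In either description the fiber through $x$ has a canonical normalized Haar probability measure $\nu_x$, and I would set
\[
I(f)(x):=\int_{\mu^{-1}(\mu(x))}f\,d\nu_x,\qquad f\in C^\infty(M),\ x\in M.
\]
Because by hypothesis every fiber of every $\pi_\alpha$ is contained in the corresponding fiber of $\mu$, a function constant along the fibers of $\mu$ is automatically constant along the fibers of each $\pi_\alpha$; hence $I(f)$ will be admissible, giving Property~(1) of Definition~\ref{averaging operation}, and Properties~(2) and (3) are immediate since $\nu_x$ is a probability measure.

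The first real step is to check $I(f)\in C^\infty(M)$. Since this is local I would work in the normal form $\mu=(x_1,\dots,x_k,x_{k+1}^2+y_{k+1}^2,\dots,x_n^2+y_n^2)$ near a nondegenerate elliptic singular point, where the fiber of $\mu$ over $(a_1,\dots,a_k,b_{k+1},\dots,b_n)$ is the torus in the angular coordinates $y_1,\dots,y_k$ times the circles $\{x_j^2+y_j^2=b_j\}$ for $j>k$. Averaging in the first $k$ directions is integration against $dy_1\cdots dy_k$ and is manifestly smooth in all parameters, while averaging over the circle $\{x_j^2+y_j^2=b_j\}$ produces the radial mean $b_j\mapsto\frac1{2\pi}\int_0^{2\pi}F(\sqrt{b_j}\cos\theta,\sqrt{b_j}\sin\theta)\,d\theta$, which is a $C^\infty$ function of $b_j\ge 0$ because only even powers of $\sqrt{b_j}$ survive in the Taylor expansion. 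Iterating over $j>k$ shows $I(f)$ is smooth across the singular strata of $\mu$; over the regular part it is obviously smooth.

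It remains to arrange Properties~(4) and (5). For~(5) I would choose the admissible covering $\{V_\alpha'\}$ (and, below, $\{V_\alpha''\}$) to consist of $\mu$-saturated sets; this is possible because the open sets of the locally toric fibration may be taken of the form $\mu^{-1}(\text{open set in }B)$, and a $\mu$-saturated set is automatically $\pi_\beta$-saturated for every $\beta$, hence admissible. Then $\operatorname{supp}I(f)\subset\mu^{-1}\bigl(\mu(\operatorname{supp}f)\bigr)\subset V_\alpha'$ whenever $\operatorname{supp}f\subset V_\alpha'$. For~(4) I would take $\{V_\alpha''\}$ to be a $\mu$-saturated admissible shrinking adapted to the charts on which the fiber of $\pi_\alpha$ is \emph{maximal} among the fibers through the given point; on such a chart the fiber of $\pi_\alpha$ coincides with the fiber of $\mu$ (this is exactly what occurs for the charts coming from the local toric models, e.g.\ the $T^n$-quotient over the open stratum and the sub-torus quotients along the lower strata), so there
\[
\min_{y\in\pi_\alpha^{-1}\pi_\alpha(x)}f(y)\ \le\ I(f)(x)\ \le\ \max_{y\in\pi_\alpha^{-1}\pi_\alpha(x)}f(y),
\]
as required.

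I expect the smoothness of $I(f)$ at the elliptic singular fibers --- the radial-average computation above --- to be the only genuinely delicate point. The remaining verifications, namely that the maximal-fiber charts may be taken $\mu$-saturated and cover $M$, and that $\mu$-saturation implies admissibility, are routine consequences of the structure of a locally toric Lagrangian fibration together with the standing hypothesis that the fibers of $\{\pi_\alpha\}$ lie inside the fibers of $\mu$.
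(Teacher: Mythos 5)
Your construction is correct in substance, and it produces exactly the same operator as the paper's, but by a genuinely different route. The paper does not average fiberwise on $M$ directly: it pulls $f$ back to the fiber product $q_B^*M$ with the universal cover of $B$, where the obstruction class $[\{\rho_{\alpha\beta}\}]$ dies and a genuine $T^n$-action exists, sets $\tilde f(\tilde x)=\int_{T^n}(f\circ q_M)(t\tilde x)\,dt$, proves $T^n\rtimes_\rho\pi_1(B)$-invariance exactly as in Lemma~\ref{invariantHermitianstructureonpi*M}, and descends the result to $M$. The advantage of that route is that smoothness and chart-independence are automatic (one integrates a smooth function over a fixed compact group acting smoothly), at the cost of invoking the covering-space and semidirect-product machinery already set up for the invariant Hermitian structure. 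Your route buys elementarity: you work on $M$ itself, observing that the fiberwise Haar probability measure is intrinsic (the transition maps are torus automorphisms composed with translations, so the local averages agree), but you then must pay for smoothness across the elliptic strata by hand; your normal-form computation is fine, and is in fact slightly easier than you state it, since the circular average $\frac{1}{2\pi}\int_0^{2\pi}F(R_\theta(x_j,y_j))\,d\theta$ is already smooth in $(x_j,y_j)$ by differentiation under the integral sign, without passing through the expansion in $\sqrt{b_j}$. One caveat applies equally to both arguments: since the two operators coincide ($I(f)(x)$ is the average of $f$ over $\mu^{-1}\mu(x)$), Property~(4) of Definition~\ref{averaging operation} is only satisfied because through each point one can find a member of the compatible fibration whose fiber is the whole $\mu$-fiber; this is not a formal consequence of the stated hypothesis that $\pi_\alpha^{-1}\pi_\alpha(x)\subset\mu^{-1}\mu(x)$, but it does hold for the compatible fibrations to which the lemma is applied (the $\mu$-fibration over the open stratum, and the circle quotients whose orbits exhaust the singular fibers), and the paper's one-line verification of the properties implicitly uses the same fact. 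You flag this point explicitly, which is reasonable, but it deserves to be recorded as an extra hypothesis rather than dismissed as routine.
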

\begin{proof}
For $f\in C^\infty(M)$ let $\tilde{f}\in C^\infty(q_B^*M)$ be the function on $q_B^*M$ which is defined by 
\[
\tilde{f}(\tilde{x}):=\int_{T^n}(f\circ q_M)(t\tilde{x})dt. 
\]
Then, by the similar way to that in the proof of Lemma~\ref{invariantHermitianstructureonpi*M}, we can show that $\tilde{f}$ is $T^n\rtimes_\rho \pi_1(B)$-invariant. Hence it descends to the function on $M$. We denote it by $I(f)$. Then, it is clear that $I(f)$ satisfies the properties in Subsection~\ref{Technical comments}. \
\end{proof}

\subsubsection{Bohr-Sommerfeld fibers and the Riemann-Roch number}
Let $\mu \colon (M, \omega )\to B$ be a prequantized locally toric Lagrangian fibration with prequantizing line bundle $(L,\nabla)$. Recall that, as described above, all fibers are smooth. 

\begin{definition}\label{BS}
A fiber $F$ of $\mu$ is said to be {\it Bohr-Sommerfeld} if the restriction $(L,\nabla)|_F$ is trivially flat. A point $b$ of $B$ is also said to be {\it Bohr-Sommerfeld} if the fiber $\mu^{-1}(b)$ is Bohr-Sommerfeld. 
\end{definition}
\begin{remark}
A fiber $F$ of $\mu$ is Bohr-Sommerfeld if and only if the cohomology $H^*(F;(L,\nabla)|_F)$ does not vanish, see Lemma~\ref{vanishing of cohomology}. This is also equivalent to the condition that the de Rham operator on $F$ with coefficients in $(L,\nabla )|_F$ has non zero kernel. 
\end{remark}

First we specify Bohr-Sommerfeld points for the local model. 
\begin{proposition}
Let $(L,\nabla)$ be a prequantizing line bundle on $(\C^n,\omega_{\C^n})$. Then, a point $b\in \R^n_+$ is Bohr-Sommerfeld if and only if $b\in \R^n_+\cap \Z^n$.  
\end{proposition}
\begin{proof}
Since $\C^n$ is contractible $L$ is trivial as a complex line bundle. Then we can assume that $L$ is of the form $L=\C^n\times \C$ without loss of generality. Then, $\nabla$ can be written as 
\[
\nabla =d-2\pi \sqrt{-1}A
\]
for some one form on $\C^n$ with $dA=\omega_{\C^n}$. Moreover $A$ is unique up to exact one form since $\C^n$ is contractible. In particular, $A$ is of the form 
\[
A=\dfrac{\sqrt{-1}}{4\pi}\sum_{i=1}^n(z_id\bar{z}_i-\bar{z}_idz_i)+df
\]
for some smooth function $f$ on $\C^n$. 

By using the polar coordinate $z_i=r_ie^{2\pi \sqrt{-1}\theta_i}$ we can write $\mu_{\C}$ and $A$ in the following forms 
\[
\mu_{\C^n}=(r_1^2,\ldots ,r_n^2),\ A=\sum_ir_i^2d\theta_i+df. 
\]
In particular, we see that the tangent space along a nonsingular fiber of $\mu_{\C^n}$ is spanned by $\partial_{\theta_i}$'s. Thus a direct computation shows that a point $b\in \R^n_+$ is Bohr-Sommerfeld if and only if $b\in\R^n_+\cap \Z^n$. \
\end{proof}

By the above proposition and the definition of a locally toric Lagrangian fibration we can obtain the following corollary. 
\begin{corollary}
For a locally toric Lagrangian fibration Bohr-Sommerfeld fibers appear discretely. 
\end{corollary}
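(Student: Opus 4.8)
The plan is to localize the question on $B$ and reduce it to the model fibration $\mu_{\C^n}\colon\C^n\to\R^n_+$ treated in the preceding Proposition. Fix a point of $B$ and a chart $(U_\alpha,\varphi^B_\alpha)$ around it, together with the symplectomorphism $\varphi^M_\alpha\colon(\mu^{-1}(U_\alpha),\omega)\to(\mu_{\C^n}^{-1}(W_\alpha),\omega_{\C^n})$, where $W_\alpha:=\varphi^B_\alpha(U_\alpha)\subset\R^n_+$, satisfying $\mu_{\C^n}\circ\varphi^M_\alpha=\varphi^B_\alpha\circ\mu$. Since being Bohr--Sommerfeld is a property of the restriction of the prequantum data to a single fibre (Definition~\ref{BS}), and $\varphi^M_\alpha$ carries the restricted data of $(L,\nabla)$ over a fibre to that of the pushed-forward bundle $(\varphi^M_\alpha)_*\bigl((L,\nabla)|_{\mu^{-1}(U_\alpha)}\bigr)$, a point $b\in U_\alpha$ is Bohr--Sommerfeld for $(M,\omega,L,\nabla)$ if and only if $\varphi^B_\alpha(b)$ is Bohr--Sommerfeld for that prequantizing line bundle on the open set $\mu_{\C^n}^{-1}(W_\alpha)$. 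Hence it suffices to show that for an arbitrary prequantizing line bundle on $\mu_{\C^n}^{-1}(W_\alpha)$ the set of Bohr--Sommerfeld points in $W_\alpha$ is discrete.

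To see this I would compare such a bundle with the restriction of a prequantizing line bundle $(L_0,\nabla_0)$ defined on all of $\C^n$, which exists and is unique up to isomorphism because $\C^n$ is contractible. The two differ by tensoring with a flat Hermitian line bundle $\xi$ on $\mu_{\C^n}^{-1}(W_\alpha)$, so over a fibre $F_b=\mu_{\C^n}^{-1}(b)$ the restricted flat bundle is $(L_0,\nabla_0)|_{F_b}\otimes\xi|_{F_b}$, whose holonomy around the standard angle loop $\gamma_i$ (the circle on which $\theta_i$ varies) is the product of $\exp(2\pi\sqrt{-1}\,b_i)$, computed in the proof of the Proposition via $A=\sum_i r_i^2\,d\theta_i+df$, and the holonomy of $\xi|_{F_b}$ along $\gamma_i$. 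After shrinking $W_\alpha$ to a product of intervals, $\mu_{\C^n}^{-1}(W_\alpha)$ deformation retracts onto a single torus fibre, so the holonomy of $\xi$ along $\gamma_i$ is the same for every fibre in which $\gamma_i$ is not null-homotopic and is trivial otherwise; call this common value $\exp(2\pi\sqrt{-1}\,c_i)$, where $c_i=0$ whenever the $i$-th coordinate is allowed to vanish in the chart. Consequently $(L,\nabla)|_{\mu^{-1}(b)}$ is trivially flat, i.e.\ $b$ is Bohr--Sommerfeld, precisely when $b_i+c_i\in\Z$ for every $i$ with $b_i>0$; this describes a discrete subset of $W_\alpha$, and transporting it back by $(\varphi^B_\alpha)^{-1}$ shows the Bohr--Sommerfeld locus is discrete in $U_\alpha$. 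Covering $B$ by such charts yields the claim.

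The step that needs genuine care is this last holonomy comparison over the non-simply-connected open set $\mu_{\C^n}^{-1}(W_\alpha)$: one must check that the flat bundle $\xi$ contributes a holonomy shift that is locally constant in the fibre direction, and that fibres lying on the lower strata of $W_\alpha$ impose only the Bohr--Sommerfeld conditions coming from the angle coordinates still present there. Everything else is immediate from the preceding Proposition and the discreteness of $\Z^n$; in particular no analysis of the prequantum connection beyond the explicit local formula already used in that proof is required.
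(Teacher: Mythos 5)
Your argument is correct and follows essentially the same route as the paper: the corollary is deduced by reducing, via the charts $(\varphi^B_\alpha,\varphi^M_\alpha)$ in the definition of a locally toric Lagrangian fibration, to the explicit holonomy computation $A=\sum_i r_i^2\,d\theta_i+df$ of the preceding proposition, so that the Bohr--Sommerfeld locus is locally a translate of a lattice-type set. The only difference is that you make explicit the flat-line-bundle twist needed because $\mu_{\C^n}^{-1}(\varphi^B_\alpha(U_\alpha))$ is not simply connected (so the local prequantum data need not be the restriction of one on all of $\C^n$); this is a careful filling-in of a step the paper leaves implicit rather than a different method.
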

\begin{example}
For a nonsingular projective toric variety it is well-known that Bohr-Sommerfeld fibers correspond one-to-one to the integral points in the moment polytope. For example see \cite{Danilov}. 
\end{example}
\begin{example}\label{nontoricex2}
We consider the locally toric Lagrangian fibration $\mu\colon (M,\omega)\to B$ in Example~\ref{nontoricex}. We show that $(M,\omega)$ is prequantizable. 

Let $(H_c,\nabla^{H_c})$ be the $c$ times tensor power of the hyperplane bundle on $\C P^1$. With the identification of $(\C P^1,c\omega_{FS})$ and the symplectic quotient $\left(\Phi^{-1}(0),\omega_{\C^2}|_{\Phi^{-1}(0)}\right)/S^1$ in Example~\ref{nontoricex} $(H_c,\nabla^{H_c})$ can be written in the following explicit way
\[
(H_c,\nabla^{H_c})=\left( \Phi^{-1}(0)\times \C , d+1/2\sum_i(z_id\overline{z}_i-\overline{z}_idz_i)\right) /S^1,
\]
where the $S^1$-action is defined by
\begin{equation*}
t\cdot (z_0,z_1,w):=(tz_0,tz_1,t^cw). 
\end{equation*}

Now let us define the prequantizing line bundle $(\tilde{L},\tilde{\nabla})$ on $(\tilde{M},\tilde{\omega})$ by
\[
(\tilde{L},\tilde{\nabla}):=\left( \pr_1^*(\R\times S^1\times \C ,d-2\pi \sqrt{-1}rd\theta )\otimes_\C\pr_2^*(H_c,\nabla^{H_c})\right) .
\]
We also define the lift of the $\Z$-action~\eqref{ZonM} on $\tilde{M}$ to $\tilde{L}$ by
\begin{equation}\label{ZonL}
n(r,u,[z_0:z_1,w]):=\left( r+n(-a\abs{z_1}^2+b),u,[z_0:u^{na}z_1,u^{nb}w]\right) .
\end{equation}
It is easy to see that \eqref{ZonL} preserves $\tilde{\nabla}$ and the standard Hermitian metric. We put
\[
(L,\nabla):=(\tilde{L},\tilde{\nabla})/\Z . 
\]
Then $(L,\nabla)$ is a prequantizing line bundle on $(M, \omega)$. 

Next we see the Bohr-Sommerfeld fibers of $\mu$ with respect to $(L,\nabla)$. The direct computation shows that Bohr-Sommerfeld fibers of $\tilde{\mu}$ correspond one-to-one to the elements in $\tilde{B}\cap \Z^2$. Let $F$ be a fundamental domain of the $\Z$-action~\eqref{ZonB} on $\tilde{B}$. $F$ is written as
\[
F:=\{ (r_1,r_2)\in \tilde{B} \mid 0\le r_2\le c,\ -1/2\le r_1<-ar_2+b-1/2\}.
\]
Then, Bohr-Sommerfeld fibers of $\mu$ correspond one-to-one to the elements in $F\cap \Z^2$. See Figure~\ref{fund-domain}. 
\begin{figure}[htb]
\centering
\input{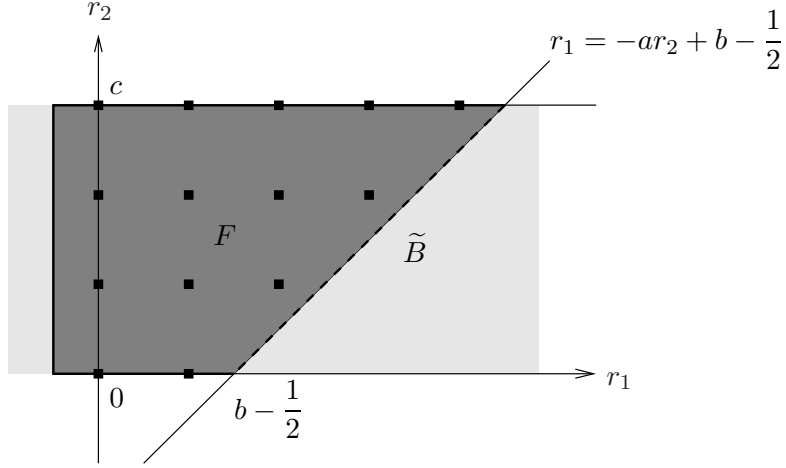}
\caption{Bohr-Sommerfeld points in Example~\ref{nontoricex}}
\label{fund-domain}
\end{figure}
\end{example}

In the rest of this section we assume that $M$ is closed. Let $(g,J)$ be a Hermitian structure on $M$ compatible with $\omega$ as in Corollary~\ref{invariantHermitianstructure}. We define the Hermitian vector bundle $W$ on $M$ by 
\begin{equation}\label{W}
W:=\wedge_\C^\bullet TM_\C\otimes L. 
\end{equation}
$W$ is a $\Z_2$-graded Clifford module bundle with respect to the Clifford module structure \eqref{Clifford}. Let $D$ be the Dirac-type operator on $W$. We define the {\it Riemann-Roch number} to be the  index of $D$. 

The purpose of this section is to show the following theorem. 
\begin{theorem}\label{RR=BSforlocallytoricLagrangianfibrations}
Let $\mu \colon (M, \omega )\to B$ be a four-dimensional prequantizable locally toric Lagrangian fibration with prequantizing line bundle $(L,\nabla)$. Then the Riemann-Roch number is equal to the number of both nonsingular and singular Bohr-Sommerfeld fibers. 
\end{theorem}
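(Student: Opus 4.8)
The plan is to apply the localization theorem (Theorem~\ref{localization}) to the locally toric Lagrangian fibration $\mu\colon (M,\omega)\to B$ together with the Clifford module bundle $W=\wedge^\bullet_\C TM_\C\otimes L$ and the associated Dirac-type operator $D$, whose index is by definition the Riemann-Roch number. The first step is to produce the geometric input: a strongly acyclic compatible system on an open subset $V$ whose complement is a disjoint union of small neighborhoods, one around each Bohr-Sommerfeld fiber. Here I would take $V$ to be the preimage under $\mu$ of the complement of the (discrete, by the Corollary after the proposition specifying Bohr-Sommerfeld points) set of Bohr-Sommerfeld points in $B$, shrunk slightly so that each deleted piece is a neighborhood of a single Bohr-Sommerfeld fiber. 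On $V$ one builds the compatible fibration fiberwise: over each coordinate chart $U_\alpha\cong\R^n_+$ (here $n=2$) modeled on the standard toric picture, one uses the quotient maps by subtori of $T^2$ exactly as in the local models of Section~\ref{singularBSinD} and Section~\ref{singularnonBS}, and one must patch these over the overlaps using the cocycle $\{\rho_{\alpha\beta}\}\in H^1(B;\Aut(T^n))$. The compatibility of the local fibrations is governed precisely by that cocycle, and the existence of an invariant Hermitian structure (Corollary~\ref{invariantHermitianstructure}) and an averaging operation (Lemma~\ref{averagingoperationonM}) ensures Assumption~\ref{technical assumptions} holds, so the machinery of Section~4 applies. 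Strong acyclicity away from the Bohr-Sommerfeld fibers follows from Lemma~\ref{vanishing of cohomology} (the restriction of $(L,\nabla)$ to a non-Bohr-Sommerfeld fiber has vanishing $H^0$, hence vanishing de Rham cohomology) combined with Proposition~\ref{acyclic=strong} (or its generalization Proposition~\ref{acyclic=strong2}), since the fibers are flat affine tori and the subtori used have mutually commuting orthogonal projections.

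With this input, Theorem~\ref{localization} gives
\[
RR(M)=\ind D=\sum_{i=1}^N \ind(V_i,\,V_i\cap V,\,W|_{V_i\cap V}),
\]
where $V_1,\ldots,V_N$ are disjoint neighborhoods of the Bohr-Sommerfeld fibers. The next step is to identify each local contribution with one of the model local indices computed in Section~6.1: a Bohr-Sommerfeld fiber lying over a vertex (a rank-$0$, i.e.\ totally singular, point of $B$) contributes $RR_0(a_1,a_2)$ for suitable integers $a_1,a_2$ determined by the local toric data, while a Bohr-Sommerfeld fiber lying over an edge (a rank-$1$ point) contributes $RR_1(a_+,a_-)$, and a Bohr-Sommerfeld fiber over the interior $\mathcal S^{(2)}B$ — a genuine Lagrangian $2$-torus — contributes the index of the basic model on $\R^2\times T^2$, which by \cite[Theorem~6.7]{Fujita-Furuta-Yoshida} (cited in the proof of Theorem~\ref{calculation}) equals $1$. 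The identification is via the excision property (Theorem~\ref{exision}): each $V_i$ can be replaced by a neighborhood isometric, after a deformation of the symplectic form and connection (invariant under continuous deformation of the local index), to the standard local model, so the contribution depends only on the local germ of $\mu$ at the Bohr-Sommerfeld fiber. By Theorem~\ref{calculation}, $RR_0(a_1,a_2)=1$ for all $a_1,a_2$ and $RR_1(a_+,a_-)=0$ for all $a_+,a_-$; combined with the interior contribution being $1$, every Bohr-Sommerfeld fiber — singular or nonsingular — contributes exactly $1$, and every non-Bohr-Sommerfeld point contributes nothing since it lies in $V$. Summing gives $RR(M)=\#\{\text{Bohr-Sommerfeld fibers}\}$.

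I expect the main obstacle to be the construction and bookkeeping of the global compatible fibration on $V$ and the verification that it is a genuine \emph{good} (hence strongly acyclic) compatible system in the sense of Definition~\ref{compatible strongly acyclic system}, rather than the index-theoretic sum, which is essentially formal once the input is in place. Concretely, the delicate points are: (i) choosing the subtori over each chart so that over overlaps, where the transition automorphism $\rho_{\alpha\beta}$ acts, the resulting family of tori still comes from a family of subspaces with mutually commuting orthogonal projections (this is where one needs the rational invariant metric on $T^n$ of Proposition~\ref{rem good open covering} and possibly the orbifold formulation of Remark~\ref{orbifold1} near the singular fibers, where the quotients $U_\alpha$ acquire orbifold points); (ii) near a Bohr-Sommerfeld \emph{edge} fiber one must use the rank-$1$ model $RR_1$, and the matching requires carefully aligning the two half-neighborhoods as in Lemma~\ref{relation2}; and (iii) checking that all the technical hypotheses (Condition~(TI) after deformation to a generalized cylindrical end, existence of admissible partitions of unity) are met — these are handled by Lemma~\ref{averagingoperationonM}, Corollary~\ref{invariantHermitianstructure}, and the deformation results of Section~4.3, but assembling them for the non-toric examples such as Example~\ref{nontoricex} is where care is needed. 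Once these are settled, the theorem follows by plugging the values from Theorem~\ref{calculation} into the localization formula.
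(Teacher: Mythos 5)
Your overall strategy---localize via Theorem~\ref{localization} and evaluate the local contributions using the models of Section~6.1 and Theorem~\ref{calculation}---is the same as the paper's, but there is a genuine error in how you assign the local contributions over the one-dimensional stratum ${\mathcal S}^{(1)}B$. The model index $RR_1(a_+,a_-)$ of Section~\ref{singularnonBS} is defined under Assumption~\ref{assumption1}, which forces \emph{every} fiber of $\mu_1$, including the excluded singular fiber over $(r_1,0)$, to be non-Bohr-Sommerfeld; so $RR_1=0$ is the contribution of a non-Bohr-Sommerfeld elliptic fiber over an edge point, not of a Bohr-Sommerfeld one. Assigning $RR_1$ to the Bohr-Sommerfeld edge fibers, as you do, would make each of them contribute $0$, which both contradicts your own concluding sentence (``every Bohr-Sommerfeld fiber contributes exactly $1$'') and would falsify the theorem, e.g.\ for $\CP^2$ with the square of the hyperplane bundle, where lattice points lie on open edges of the moment triangle. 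In the paper the contribution of a singular Bohr-Sommerfeld fiber over ${\mathcal S}^{(1)}B$ is obtained by a different mechanism: the product formula (Theorem~\ref{product}) applied to the one-dimensional Bohr-Sommerfeld model $[BS^+]$ and the elliptic disc model $[D^+]$ of \cite[Theorem~6.7]{Fujita-Furuta-Yoshida}, which gives the value $1$. This step is absent from your argument, and without it the count of edge Bohr-Sommerfeld fibers is not recovered.

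Relatedly, your claim that ``every non-Bohr-Sommerfeld point contributes nothing since it lies in $V$'' presupposes that a strongly acyclic compatible system exists on all of $V=\mu^{-1}(B\setminus B_{BS})$, and that is exactly what fails. Over an edge the $\mu$-fibers are circles, so the fibration $\pi_0=\mu$ cannot be used there, and the free circle actions $T_i$ exist only over contractible, pairwise disjoint neighborhoods $W_i$ of the edge Bohr-Sommerfeld points; since a boundary component of $B$ may be a circle and no global torus action is available (Example~\ref{nontoricex}), these $W_i$ necessarily miss finitely many non-Bohr-Sommerfeld edge points $q_1,\dots,q_l$. Consequently the index localizes at the Bohr-Sommerfeld fibers \emph{and} at the fibers over the $q_j$, and it is precisely these leftover fibers whose contribution is $RR_1(a_+,a_-)=0$. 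So $RR_1$ is doubly misplaced in your proposal: it belongs to the $q_j$-fibers you did not account for, while the Bohr-Sommerfeld edge fibers require the product formula. For the same reason, your plan to patch the local subtorus fibrations over chart overlaps ``using the cocycle $\{\rho_{\alpha\beta}\}$'' cannot work when the obstruction class of Lemma~\ref{obstruction} is nontrivial; the paper avoids any such global patching by taking $\mu$ over the open stratum and disjoint local circle-action models near the edge Bohr-Sommerfeld fibers, which is exactly why the points $q_j$ appear.
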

\begin{proof}
Let $B_{BS}$ be the set of Bohr-Sommerfeld points of $\mu$ in $B$. We put $V:=\mu^{-1}(B\setminus B_{BS})$. In order to prove Theorem~\ref{RR=BSforlocallytoricLagrangianfibrations} we define a good compatible fibration on $V$ as follows. 

On the regular non Bohr-Sommerfeld points $U_0:={\mathcal S}^{(2)}B\setminus B_{BS}$ of $\mu$ we define the fibration by 
\[
\pi_0:=\mu|_{V_0}\colon V_0:=\mu^{-1}(U_0)\to U_0. 
\]

Since $B$ is compact, there are only finitely many Bohr-Sommerfeld points in ${\mathcal S}^{(1)}B$. Suppose we have exactly $k$ Bohr-Sommerfeld points $p_1, \ldots ,p_k$ in ${\mathcal S}^{(1)}B$, namely, 
\[
\{p_1,\ldots ,p_k\}=B_{BS}\cap {\mathcal S}^{(1)}B. 
\]
For each $i$ we take a contractible open neighborhood $W_i$ of $p_i$ in $B$ which satisfies the following properties. 
\begin{enumerate}
\item\label{Wi1} For each $i$ $W_i$ contains no Bohr-Sommerfeld points except for $p_i$.
\item\label{Wi2} For each $i$ $W_i\cap {\mathcal S}^{(1)}B$ is connected.
\item\label{Wi4} For each $i$ $W_i$ does not intersect ${\mathcal S}^{(0)}B$,  namely, $W_i\cap {\mathcal S}^{(0)}B=\emptyset$. 
\item\label{Wi3} $W_i$'s are pairwise disjoint, namely, $W_i\cap W_j=\emptyset$ for all $i\neq j$. 
\item\label{Wi5} There exist finitely many non Bohr-Sommerfeld points in ${\mathcal S}^{(1)}B$, say $q_1, \ldots ,q_l$, such that we have 
\[
\bigcup_{i=1}^kW_i\cap {\mathcal S}^{(1)}B={\mathcal S}^{(1)}B\setminus \{q_1,\ldots ,q_l\} . 
\]
\end{enumerate}
It is possible to take such neighborhoods since a connected component of $\partial B$ is compact. 
\begin{figure}[htb]
\centering
\input{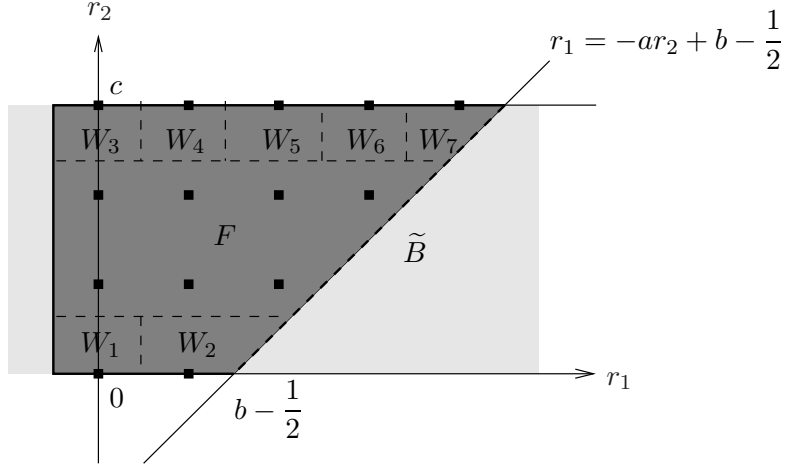}
\caption{$W_i$'s in Example~\ref{nontoricex}}
\label{W_i}
\end{figure}

We put $V'_i:=\mu^{-1}(W_i)$. Since $W_i$ is contractible, by \cite[Proposition~3.5]{Yoshida}, there exists a $T^2$-action on $V'_i$. Moreover, there exist a coordinate neighborhood $(U_{\alpha_i} ,\varphi^B_{\alpha_i})$ of $B$ containing $p_i$, a diffeomorphism $\varphi^M_{\alpha_i}\colon \mu^{-1}(U_{\alpha_i})\to \mu_{\C^2}^{-1}(\varphi^B_{\alpha_i}(U_{\alpha_i}))$ in Definition~\ref{locallytoricLagrangianfibration}, and an automorphism $\rho_{\alpha_i}\in \Aut (T^2)$ which satisfy the following properties. 
\begin{itemize}
\item $\mu_{\C^2}\circ \varphi^M_{\alpha_i}=\varphi^B_{\alpha_i}\circ \mu$. 
\item On $V'_i\cap \mu^{-1}(U_{\alpha_i})$ $\varphi^M_{\alpha_i}$ is $\rho_{\alpha_i}$-equivariant with respect to the $T^2$-action on $V'_i$ and the standard $T^2$-action on $\C^2$. 
\end{itemize}
Let $\varphi^B_{\alpha_i}(p_i)=(r_1,r_2)\in \R^2_+$. Since $p_i\in {\mathcal S}^{(1)}B$ there exists a unique coordinate $r_{j_i}$ such that $r_{j_i}= 0$. We define the circle subgroup $T_i$ of $T^2$ by
\[
T_i:=\rho_{\alpha_i}^{-1}\left(\{t=(t_1,t_2)\in T^2\mid t_{j_i}=e\}\right) . 
\]
By the properties (\ref{Wi2}) and (\ref{Wi4}) of $W_i$, $T_i$ acts on $V'_i$ freely. Then, for each $i$ we define the open set $V_i$ to be the complement of all $T_i$-orbits in $V'_i$ on each of which $(L,\nabla)$ has a non-trivial global parallel section. We also define the fibration $\pi_i\colon V_i\to U_i$ to be the natural projection 
\[
\pi_i\colon V_i\to U_i:=V_i/T_i
\]
with respect to the $T_i$-action on $V_i$. Note that for each $i$ we have the following equality
\[
V_i\cap \mu^{-1}({\mathcal S}^{(1)}B)=\mu^{-1}(W_i\cap {\mathcal S}^{(1)}B)\setminus \mu^{-1}(p_i).
\]
In order to show this equality, it is sufficient to prove the following claim: for a point $x\in \mu^{-1}(W_i\cap {\mathcal S}^{(1)}B)$ the restriction of $(L,\nabla)$ to the fiber $\mu^{-1}\left(\mu(x)\right)$ of $\mu$ is trivially flat if and only if the restriction of $(L,\nabla)$ to the $T_i$-orbit $T_ix$ through $x$ is trivially flat. This can be shown as follows. By the property (\ref{Wi2}) of $W_i$, the $T^2$-action on $\mu^{-1}(W_i\cap {\mathcal S}^{(1)}B)$ has the unique one-dimensional stabilizer $S\subset T^2$. For example, in case of $j_i=2$, $S$ is written as 
\[
S=\rho_{\alpha_i}^{-1}\left(\{t=(t_1,t_2)\in T^2\mid t_1=e\}\right) . 
\]
In any case, by the definition of $T_i$, $T^2$ is decomposed as $T^2=T_i\times S$. Then, the above claim follows from the fact that in the $T^2$-action on $\mu^{-1}\left(\mu(x)\right)$ $S$ acts trivially and $T_i$ acts freely on $\mu^{-1}\left(\mu(x)\right)$. 

By construction $\{ \pi_i\colon V_i\to U_i\mid i=0,\ldots, k\}$ is a good compatible fibration on $V$. Moreover, by Lemma~\ref{averagingoperationonM}, there is an averaging operation with respect to $\{ \pi_i\colon V_i\to U_i\mid i=0,\ldots, k\}$. 

Recall that $(g,J)$ is a Hermitian structure on $M$ compatible with $\omega$ as in Corollary~\ref{invariantHermitianstructure}. Then, as in the case of usual torus actions, $g$ defines the compatible Riemannian metric of $\{ \pi_i\colon V_i\to U_i\mid i=0,\ldots, k\}$ whose restriction to each fiber of $\mu$ is flat. 
We define the acyclic compatible system in the same way as in Section~\ref{singularBSinD}. Then by Theorem~\ref{localization formula}, the Riemann-Roch number is localized at Bohr-Sommerfeld fibers and the fibers at $q_1,\ldots ,q_l$. 

We consider their contributions. Since a fiber of $\mu$ is connected, by Theorem~\cite[Theorem~6.11]{Fujita-Furuta-Yoshida}, the contribution of a regular Bohr-Sommerfeld fiber is equal to one. 

Next we consider the contributions of singular Bohr-Sommerfeld fibers. By Definition each fiber on ${\mathcal S}^{(0)}B$ is Bohr-Sommerfeld, and its contribution is $RR_0(a_1,a_2)$ for some $a_1$ and $a_2$. By Theorem~\ref{calculation} it is equal to one. 

By the construction of the compatible fibration the local Riemann-Roch number for each singular Bohr-Sommerfeld fiber on ${\mathcal S}^{(1)}B$ is obtained from $[BS^+]$ and $[D^+]$ in \cite[Theorem~6.7]{Fujita-Furuta-Yoshida} by the product formula~\ref{product}. It is also one. 

Finally it is easy to see that the contribution of each fibers at $q_1,\ldots ,q_l$ is equal to $RR_1(a_+,a_-)$ in Section~\ref{singularnonBS} for some $a_+$ and $a_-$. Then by Theorem~\ref{calculation} it is zero. This proves Theorem~\ref{RR=BSforlocallytoricLagrangianfibrations}. \
\end{proof}
\begin{example}
Theorem~\ref{RR=BSforlocallytoricLagrangianfibrations} recovers Danilov's result~\cite{Danilov}, which says that for a nonsingular projective toric variety the Riemann-Roch number is equal to the number of the lattice points in the moment polytope, in the four-dimensional case. 
\end{example}
\begin{example}
As we described in Example~\ref{nontoricex2} the Bohr-Sommerfeld fibers correspond one-to-one to the elements in $F\cap \Z^2$. Then by Theorem~\ref{RR=BSforlocallytoricLagrangianfibrations} the Riemann-Roch number of $(M,\omega)$ is equal to the number of the elements in $F\cap \Z^2$ which is $(c+1)(2b-ac)/2$. 
\end{example}

\appendix
\section{Supplements to Section~\ref{An index theory for complete Riemannian manifolds}}\label{Appendix B}
\subsection{Partial integrations}
\begin{lemma}\label{family of cut-off functions}
Let $M$ be a complete Riemmanian manifold.
\begin{enumerate}
\item
There is a smooth proper function $f:M \to {\mathbb R}$
such that $|df|$ is bounded and $f^{-1}((-\infty,c])$ is
compact for any $c$.
\item
There is a constant $C>0$ such that
for each $\epsilon>0$ and $a \in {\mathbb R}$,
we have a compact supported
function $\rho_{a,\epsilon}:M \to [0,1]$
which
is equal to $1$ on  $f^{-1}((-\infty, a])$, and
satisfies
$
|d \rho_{a,\epsilon}|< C\epsilon.
$
\end{enumerate}
\end{lemma}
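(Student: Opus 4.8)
The goal is to prove Lemma~\ref{family of cut-off functions}, producing an exhaustion function with bounded differential together with a family of cut-offs whose differentials are uniformly small.

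\medskip

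\noindent\textbf{Plan.} For part (1), the plan is to construct $f$ directly from the Riemannian distance. Fix a base point $x_0\in M$ and set $r(x):=d(x_0,x)$. Since $M$ is complete, $r$ is a proper function and $|dr|=1$ almost everywhere, but $r$ is only Lipschitz, not smooth. So first I would mollify: choose a smooth function $f$ with $|f-r|\le 1$ everywhere and $|df|\le 2$, for instance by taking a locally finite cover $\{B(x_i,1)\}$ with subordinate partition of unity $\{\chi_i\}$ and setting $f:=\sum_i \chi_i\, r(x_i)$; on each $B(x_i,1)$ the values $r(x_j)$ entering the sum differ from $r(x)$ by at most $2$, and a standard estimate on $\sum_i (d\chi_i)\, r(x_i)=\sum_i (d\chi_i)(r(x_i)-r(x))$ (using $\sum d\chi_i=0$) bounds $|df|$ in terms of the geometry of the cover. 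Properness of $f$ and compactness of $f^{-1}((-\infty,c])$ then follow because $f^{-1}((-\infty,c])\subset r^{-1}((-\infty,c+1])=\overline{B(x_0,c+1)}$, which is compact by the Hopf--Rinow theorem. One can also invoke the known existence of smooth proper functions with bounded gradient on complete manifolds (this is the classical fact used in \cite{Gromov}); I would cite it rather than belabor the mollification.

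\medskip

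\noindent\textbf{Part (2).} Given $f$ from part (1) with $|df|\le K$ for some constant $K$, I would build $\rho_{a,\epsilon}$ as a rescaled composition of $f$ with a fixed cut-off profile. Choose once and for all a smooth nonincreasing $\psi:\R\to[0,1]$ with $\psi\equiv 1$ on $(-\infty,0]$, $\psi\equiv 0$ on $[1,\infty)$, and $|\psi'|\le 2$. For $N>0$ to be chosen, set
\[
\rho_{a,\epsilon}(x):=\psi\!\left(\frac{f(x)-a}{N}\right).
\]
Then $\rho_{a,\epsilon}\equiv 1$ where $f(x)\le a$, i.e.\ on $f^{-1}((-\infty,a])$; it has compact support since it vanishes where $f(x)\ge a+N$, and $f^{-1}((-\infty,a+N])$ is compact by part (1); and
\[
|d\rho_{a,\epsilon}|=\frac{1}{N}\,\bigl|\psi'\bigl(\tfrac{f-a}{N}\bigr)\bigr|\,|df|\le \frac{2K}{N}.
\]
Choosing $N:=2K/\epsilon$ (or any $N>2K/\epsilon$) gives $|d\rho_{a,\epsilon}|<\epsilon$, hence $|d\rho_{a,\epsilon}|<C\epsilon$ with $C=1$; if one prefers $N$ independent of the particular $K$, absorb $K$ into the constant $C:=2K$. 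The point is that $C$ depends only on $(M,f)$ and not on $a$ or $\epsilon$.

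\medskip

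\noindent\textbf{Main obstacle.} The only genuine subtlety is part (1): the distance function is not smooth, so one must either carry out the mollification argument carefully (checking that the cover can be taken with uniformly bounded multiplicity and that the partition of unity can be chosen with $|d\chi_i|$ controlled, which is where completeness and a lower bound on the sizes of the balls enter) or invoke the standard smoothing theorem for proper functions on complete manifolds. Part (2) is then a routine rescaling. I would present part (1) briefly, leaning on the cited reference, and give part (2) in the few lines above.
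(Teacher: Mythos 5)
Your proposal is correct and follows essentially the same route as the paper: smooth the distance function from a base point to obtain a proper $f$ with bounded $|df|$, then get $\rho_{a,\epsilon}$ by composing $f$ with a rescaled one-variable cut-off profile, exactly as in the paper's Appendix. The only divergence is in the smoothing detail of part (1) — you average the values $r(x_i)$ against a partition of unity and defer the needed uniform control of $\sum_i |d\chi_i|$ to the cited classical result (Greene--Wu, which the paper also cites), whereas the paper's self-contained construction mollifies $f_0$ chart-by-chart with a kernel whose radius is shrunk by $1/(n_\gamma C_\gamma)$, so that the error term $\sum_\gamma |d\rho_\gamma|\,|f_\gamma-f_0|$ stays bounded without any uniform multiplicity or gradient bound on the cover.
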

A proof of the above lemma~\ref{family of cut-off functions} is given in \cite{Green-Wu}. The existence of such a function in (1) of Lemma~\ref{family of cut-off functions} is equivalent to the completeness of  $M$.  For more details see \cite{Green-Wu}. 

By using the family of cut-off functions we can show the following two partial integration formulas.
In general let
$W$ be a Hermitian vector bundle over 
a complete Riemannian manifold $M$, and
$D_{\tau}:\Gamma(W) \to \Gamma(W)$ be
a first order partial differential operator on $W$
with smooth coefficients whose principal symbol is $\tau$.
We assume that 
$D_\tau$ has
finite propagation speed, i.e., 
$\tau$ is a smooth $L^\infty$-bounded
section of $TM \otimes \End (W)$.

\begin{lemma} \label{partial integral 1}
Let $s \in \Gamma(W)$ be an $L^2$-bounded section
such that  $D_{\tau}^*D_{\tau} s$ is also $L^2$-bounded.
Then $D_{\tau}s$ is also $L^2$-bounded and we have
$$
\int_M (D_{\tau}^*D_{\tau}s,s) =\int_M |D_{\tau}s|^2.
$$
\end{lemma}
\begin{proof}
We first assume that $s$ is smooth.
We follow Gromov's proof of \cite[Lemma 1.1 B]{Gromov}. 
From the equality
\begin{align} 
\int_M (D_{\tau}^* D_{\tau} s,\, \rho^2_{a,\epsilon} s)
&=\int_M (D_{\tau}s,\, D_{\tau}(\rho^2_{a,\epsilon} s)) \nonumber \\
&=\int_M (D_{\tau}s,\,  \rho^2_{a,\epsilon} D_{\tau}s)
  +\int_M (D_{\tau}s,\,  2
\rho_{a,\epsilon}\tau (d \rho_{a,\epsilon})s),\label{equation:partial integral}
\end{align}
there is a constant $C$ independent of $s, a, \epsilon$
such that 
$$
|| D_{\tau}^* D_{\tau} s||_2 ||s||_2
\geq ||\rho_{a,\epsilon} D_{\tau}s||_2^2
- C \epsilon||\rho_{a,\epsilon} D_{\tau}s||_2.
$$
It implies that, as $a$ increases, 
$||\rho_{a,\epsilon} D_{\tau}s||_2$ is bounded,
i.e., $D_{\tau}s$ is $L^2$-bounded.
Using (\ref{equation:partial integral}) again we have
$$
\int_M (D_{\tau}^* D_{\tau} s, s)
=||D_{\tau}s||_2^2+I, \qquad
|I|\leq C\epsilon ||D_{\tau}s||_2||s||_2.
$$
Taking $\epsilon \to 0$, we obtain the required
equality.

When $s$ is not smooth,
take a smooth compactly supported section 
which approximate $s$ in $L^2_2$-norm on 
the support of $\rho_{a,\epsilon}$.
Then we can reduce the argument to the smooth case. \

\end{proof}

\begin{lemma} \label{partial integral 2}
Suppose $s_0$ and $s_1$ are  $L^2$-bounded sections of
$W$ such that $D_{\tau} s_0$ and $D_{\tau}^* s_1$ are also
$L^2$-bounded. Then we have
$$
\int_M (D_{\tau} s_0, s_1) =\int_M (s_0, D_{\tau}^*s_1).
$$
\end{lemma}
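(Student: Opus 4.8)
**Proof plan for Lemma~\ref{partial integral 2} (the partial integration formula for $D_\tau$ and $D_\tau^*$).**

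The plan is to reduce the statement to an honest integration by parts on compact sets and then control the boundary/error terms using a family of cutoff functions adapted to the finite propagation speed. First I would fix an exhaustion of $M$ by precompact open sets, or more efficiently choose, for each $R>0$, a smooth cutoff $\chi_R\colon M\to[0,1]$ equal to $1$ on the metric ball $B(x_0,R)$, supported in $B(x_0,2R)$, and with $|d\chi_R|\le 2/R$ everywhere. Such $\chi_R$ exist because $M$ is complete (so that the distance function is Lipschitz and one can smooth it). The point of the bound on $|d\chi_R|$ is that, since $D_\tau$ has finite propagation speed, the zeroth-order commutator $[D_\tau,\chi_R]=\sigma(d\chi_R)$ (Clifford/principal-symbol multiplication by $d\chi_R$) is bounded pointwise by $C_0|d\chi_R|\le 2C_0/R$, and likewise for $D_\tau^*$.

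Next I would apply the genuine (compactly supported) integration by parts to $\chi_R s_0$ and $s_1$: for compactly supported sections the identity $\int_M(D_\tau(\chi_R s_0),s_1)=\int_M(\chi_R s_0,D_\tau^* s_1)$ holds by the very definition of the formal adjoint $D_\tau^*$ together with the fact that $\chi_R s_0$ is compactly supported and smooth (approximating $s_0$ by smooth sections if necessary; $L^2$-boundedness of $s_0$ and $D_\tau s_0$ makes this harmless). Expanding the left side via the Leibniz rule, $D_\tau(\chi_R s_0)=\chi_R D_\tau s_0+\sigma(d\chi_R)s_0$, so
\[
\int_M \chi_R\,(D_\tau s_0,s_1)+\int_M(\sigma(d\chi_R)s_0,s_1)=\int_M \chi_R\,(s_0,D_\tau^* s_1).
\]
Now let $R\to\infty$. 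The two terms with $\chi_R$ converge to $\int_M(D_\tau s_0,s_1)$ and $\int_M(s_0,D_\tau^* s_1)$ respectively, by dominated convergence: the integrands $(D_\tau s_0,s_1)$ and $(s_0,D_\tau^* s_1)$ are in $L^1(M)$ because each factor is $L^2$-bounded (Cauchy–Schwarz), and $0\le\chi_R\le 1$ with $\chi_R\to 1$ pointwise. The error term is estimated by
\[
\Bigl|\int_M(\sigma(d\chi_R)s_0,s_1)\Bigr|\le \frac{2C_0}{R}\int_{B(x_0,2R)\setminus B(x_0,R)}|s_0|\,|s_1|\le \frac{2C_0}{R}\,\|s_0\|_{L^2}\,\|s_1\|_{L^2}\xrightarrow[R\to\infty]{}0,
\]
again using Cauchy–Schwarz and the $L^2$-boundedness of $s_0,s_1$. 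Combining these three limits gives the claimed identity.

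The main obstacle is really just the care needed in two places: (i) justifying the compactly supported integration by parts when $s_0$ is only assumed smooth and $L^2$-bounded with $D_\tau s_0\in L^2$ — this is where one either invokes smoothness of $s_0$ directly (the lemma is stated for $s\in\Gamma(W)$, i.e.\ smooth sections, so $\chi_R s_0$ is genuinely a smooth compactly supported section and the adjoint identity applies verbatim) or, in the non-smooth generalization, uses a Friedrichs mollification argument together with the fact that $D_\tau$ has smooth coefficients; and (ii) producing the cutoffs $\chi_R$ with the uniform gradient bound, which uses completeness of $M$ in an essential way (on an incomplete manifold the boundary term need not vanish). Everything else is dominated convergence and Cauchy–Schwarz. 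Lemma~\ref{partial integral 1} is the special case $s_0=s$, $s_1=D_\tau s$ together with the observation that $D_\tau^*D_\tau s\in L^2$ forces $D_\tau s\in L^2$ — the latter extracted from the same cutoff computation applied to $\langle D_\tau^*D_\tau(\chi_R s),\chi_R s\rangle$ and a limiting argument — so I would in fact prove Lemma~\ref{partial integral 1} first and then deduce, or independently prove, Lemma~\ref{partial integral 2} by the scheme above.
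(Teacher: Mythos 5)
Your argument is correct and is essentially the paper's own proof: the cutoffs $\chi_R$ with $|d\chi_R|\le 2/R$ play exactly the role of the functions $\rho_{a,\epsilon}$ from Lemma~\ref{family of cut-off functions}, and the identity follows, as in the paper, from compactly supported integration by parts plus an error term bounded by the $L^\infty$ bound on $\tau$ times $\|s_0\|_2\|s_1\|_2$ times the gradient bound, which tends to zero. Your remarks on the smooth versus merely $L^2$ case likewise match the paper's reduction to the smooth case as in Lemma~\ref{partial integral 1}.
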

\begin{proof}
We first assume that $s$ is smooth. We have 
\[
0 =
\int_M (D_{\tau} (\rho_{a,\epsilon}s_0), s_1) -
\int_M (s_0, D_{\tau}^*(\rho_{a,\epsilon}s_1))   
= \int_M (D_{\tau} s_0, s_1) -\int_M (s_0, D_{\tau}^*s_1) + I' 
\]
with an error term $I'$ satisfying
$|I'| \leq C \epsilon ||s_0||_2 ||s_1||_2$,
which implies the required equality.
When $s$ is not smooth,
we can reduce the argument to the smooth case
as in the proof of Lemma~\ref{partial integral 1}. \
\end{proof}
\subsection{Min-max principle}\label{min max}

In this section we use Assumption~\ref{assumption for operator}
for a single operator $D$, and Assumption~\ref{global continuity}
for a one-parameter family $\{D_t\}$.

\begin{lemma} \label{estimate with error term}
For any compact subset $K$ containing $M \setminus V$
there is a compact set $K'$ containing $K$
such that if $s$ and $Ds$ are $L^2$-bounded, then
we have the estimate
$$
\lambda_0^{1/2}||s||_{M\setminus K}
- 2 \lambda_0^{1/2} || s||_{K' \setminus K}
\leq || Ds||_{M\setminus K}.
$$
Moreover if the coefficients of $D_t$ are
$C^\infty$-convergent to those of $D_0=D$
on any compact set as $t \to 0$, then we can choose $K'$ so that
the above estimate is valid for any $t$ sufficiently close to $0$. 
\end{lemma}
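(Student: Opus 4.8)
The plan is to localize the basic estimate of Assumption~\ref{assumption for operator} near infinity by multiplying the section by a cut-off function $\chi$ that vanishes on $K$ and equals $1$ outside a slightly enlarged compact set $K'$, controlling the commutator term so precisely that its coefficient is at most $\lambda_0^{1/2}$. Concretely, I would first choose a compact neighbourhood $K'$ of $K$ and a smooth function $\psi\colon M\to[0,1]$ with $\psi\equiv 1$ on a neighbourhood of $K$, $\operatorname{supp}\psi\subset\operatorname{int}K'$, and $|d\psi|$ small enough that $\sup_M|\sigma(d\psi)|\le\lambda_0^{1/2}$; this is possible since $|\sigma(d\psi)|\le C_0|d\psi|$ and $K'$ may be taken as large a compact neighbourhood of $K$ as we wish. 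Put $\chi:=1-\psi$, so that $\chi\equiv 0$ near $K$, $\chi\equiv 1$ on $M\setminus K'$, and $\operatorname{supp}(d\chi)\subset K'\setminus K$; since $K\supset M\setminus V$ we have $\operatorname{supp}\chi\subset M\setminus K\subset V$.

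Next, for $s$ with $s$ and $Ds$ in $L^2$ (so $s$ is locally $L^2_1$ by elliptic regularity), I would apply the estimate of Assumption~\ref{assumption for operator} to $\rho_R\chi s$, where $\{\rho_R\}$ is a diagonal choice from the family of compactly supported cut-off functions of Lemma~\ref{family of cut-off functions}, chosen so that $\rho_R\to 1$ pointwise and $\sup_M|d\rho_R|\to 0$; the section $\rho_R\chi s$ is compactly supported inside $V$, and one reduces to smooth compactly supported sections by the $L^2_1$-approximation used in the proof of Lemma~\ref{partial integral 1}. Expanding
\[
D(\rho_R\chi s)=\rho_R\chi\, Ds+\sigma(d\rho_R)\,\chi s+\rho_R\,\sigma(d\chi)s
\]
and taking square roots in $\lambda_0\|\rho_R\chi s\|_M^2\le\|D(\rho_R\chi s)\|_M^2$, the triangle inequality together with $|\chi|\le 1$, $\operatorname{supp}\chi\subset M\setminus K$, $\operatorname{supp}(d\chi)\subset K'\setminus K$ and $\sup_M|\sigma(d\chi)|\le\lambda_0^{1/2}$ gives
\[
\lambda_0^{1/2}\|\rho_R\chi s\|_M\le\|Ds\|_{M\setminus K}+C_0\bigl(\sup_M|d\rho_R|\bigr)\|s\|_M+\lambda_0^{1/2}\|s\|_{K'\setminus K}.
\]
Letting $R\to\infty$ the middle term vanishes and $\rho_R\chi s\to\chi s$ in $L^2$, so $\lambda_0^{1/2}\|\chi s\|_M\le\|Ds\|_{M\setminus K}+\lambda_0^{1/2}\|s\|_{K'\setminus K}$. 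Since $\chi\equiv 1$ on $M\setminus K'$ one checks $\|s\|_{M\setminus K}\le\|\chi s\|_M+\|s\|_{K'\setminus K}$, and multiplying by $\lambda_0^{1/2}$ and combining the two inequalities yields the asserted estimate.

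For the last statement I would perform the same construction once, using $D_0$ and choosing $\psi$ with the strict bound $\sup_M|\sigma_0(d\psi)|\le\tfrac12\lambda_0^{1/2}$. The argument uses $D_t$ only through the estimate of Assumption~\ref{assumption for operator} (valid for every $D_t$ with the common $\lambda_0$ and $V$ by Assumption~\ref{global continuity}), through finite propagation speed (which each $D_t$ enjoys), and through the pointwise bound $\sup_{K'}|\sigma_t(d\psi)|\le\lambda_0^{1/2}$; since $\sigma_t\to\sigma_0$ in $C^\infty$ on the compact set $K'$, this last bound holds for all $t$ close to $0$, and the cut-offs $\rho_R$ do not depend on $t$, so the same $K'$ works. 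I expect the only delicate point to be keeping the constant sharp: one must choose $|d\chi|$ small enough that the commutator $\sigma(d\chi)s$ enters with coefficient at most $\lambda_0^{1/2}$ (which, with the coefficient $\lambda_0^{1/2}$ coming from the inequality $\|s\|_{M\setminus K}\le\|\chi s\|_M+\|s\|_{K'\setminus K}$, produces exactly the factor $2\lambda_0^{1/2}$), and in the family case this bound must be strict at $t=0$ so that $C^\infty$-convergence preserves it; the passage to compactly supported smooth sections via the auxiliary cut-offs $\rho_R$ is routine given Lemmas~\ref{family of cut-off functions} and~\ref{partial integral 1}.
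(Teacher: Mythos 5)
Your proof is correct and follows essentially the same route as the paper: multiply by a cut-off that vanishes on $K$, equals $1$ outside a compact $K'$, and has gradient small enough that the commutator term $\sigma(d\chi)s$, supported in $K'\setminus K$, enters with coefficient at most $\lambda_0^{1/2}$, then apply the hypothesis on $V$ and use the $C^\infty$-convergence of the symbols on the fixed compact set $K'$ for the family statement. The only cosmetic difference is that you reduce to smooth compactly supported sections via the auxiliary cut-offs $\rho_R$ and local elliptic regularity, whereas the paper invokes essential self-adjointness (Lemma~\ref{essentially self-adjoint}) to assume $s$ smooth and compactly supported from the outset, and bounds the commutator by $\max_{K'}\abs{\sigma}$ rather than by the global propagation-speed constant.
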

\begin{proof}
The essentially self-adjointness of $D$ implies that
we can assume that $s$ is smooth and compactly supported 
without loss of generality.
From Lemma~\ref{family of cut-off functions}, 
for any $\epsilon>0$,
there is a compact set $K'$ containing $K$ and 
a smooth non-negative function ${\rho}:M \to {\mathbb R}$ such that
$\rho=1$ on $M \setminus K'$, 
$\rho=0$ on $K$ and $|d\rho|\leq \epsilon$. 
Then the above estimate
follows from the next two inequalities
\begin{eqnarray*}
||D(\rho s)||_{M}
&\geq& \lambda_0^{1/2}||\rho s||_{V} 
\geq \lambda_0^{1/2}||s||_{M\setminus K'}
\geq \lambda_0^{1/2}||s||_{M\setminus K}-
\lambda_0^{1/2}||s||_{K' \setminus K},
\\
||D(\rho s)||_{M} &\leq&
||\rho Ds||_{M} +||\sigma \cdot ((d \rho)\otimes s) ||_{M}
\leq ||Ds||_{M\setminus K}
+C(D,K') \epsilon  ||s||_{K' \setminus K},
\end{eqnarray*}
where $C(D,K'):=\max_{K'}\abs{\sigma}$. The last statement of the lemma follows from the fact that $C(D_t,K')$ is continuous with respect to $t$. \ 
\end{proof}

\begin{proposition} \label{key proposition}
Suppose $0 \leq \lambda_1 <\lambda_0$.
Let $\{s_i\}$ be a sequence of $L^2$-sections of $W$
satisfying $||s_i||_{M}=1$,
and $\{t_i\}$ is a sequence 
convergent to $0$.
Suppose each $D_{t_i} s_i$ is $L^2$-bounded and satisfies
$||D_{t_i} s_i||_{M}^2 \leq \lambda_1$.
Then there is a
subsequence $\{ s_{i'}\}$ which is weakly convergent to
some non-zero $s_\infty\neq 0$ such that
$D_0 s_\infty$ is
$L^2$-bounded and satisfies
\begin{equation}\label{limit estimate}
||D_{0} s_\infty||_{M}^2 \leq \lambda_1 ||s_\infty||^2_M. 
\end{equation}
\end{proposition}

\begin{proof}
Take a subsequence $\{s_{i'}\}$ so that
$\{s_{i'}\}$ and $\{D_{t_{i'}} s_{i'}\}$ are
weakly convergent to some $s_\infty$ and
$u_\infty$ in $L^2(M,W)$ respectively. Since $D_t$ is a smooth family,
for each smooth compactly-supported section $\phi$
the sequence
$D_{t_{i'}}\phi$ is strongly convergent to $D_0 \phi$.
The equality
$
\int_M (D_{t_{i'}}\phi, s_{i'})=
\int_M (\phi, D_{t_{i'}}s_{i'})
$
implies 
$
\int_M (D_{0}\phi, s_{\infty})=
\int_M (\phi, u_\infty),
$
i.e., $ D_0 s_\infty=u_\infty $ weakly.

Since $\{D_{t_{i'}} s_{i'}\} $ is $L^2$-bounded, 
Assumption~\ref{global continuity} and 
a priori estimate imply that on any compact set
$s_{i'}$ is strongly $L^2$-convergent to $s_\infty$. 

On the other hand for any compact set $K$ 
containing $M\setminus V$ there exists 
a compact set $K'$ such that  
$$
\lambda_0^{1/2}||s_{i'}||_{M\setminus K}
- 2 \lambda_0^{1/2} || s_{i'}||_{K' \setminus K}
\leq || D_{t_{i'}}s_{i'}||_{M\setminus K}\le \lambda_1^{1/2}
$$by Lemma~\ref{estimate with error term}.  
If $s_{\infty}$ is $0$, 
then we have $||s_{i'}||_{K'\setminus K}$ converges to $0$, 
which contradicts to $||s_{i'}||_M=1$ and  $\lambda_1<\lambda_0$.

Suppose the estimate (\ref{limit estimate}) does not hold.
Then for any $\epsilon>0$ and 
any sufficiently small $\epsilon'>0$
there exists a compact set $K$ containing $M \setminus V$
satisfying $||s_\infty||_{M\setminus K}< \epsilon$ and
$
\lambda_1||s_\infty||^2_K +\epsilon' <||D_{0} s_\infty||_{K}^2.
$
We choose $\epsilon$ and $\epsilon'$ so that they satisfy
$
8\epsilon\lambda_0(1+2\epsilon)<\epsilon'/2.
$
Note that the weak convergence implies $||D_0 s_\infty||_K^2
\leq \liminf_{i'\to \infty} ||D_{t_{i'}} s_{i'}||_K^2$.
Since 
$s_{i'}$ is strongly $L^2$-convergent to $s_\infty$
on the compact set $K$, we have
\begin{equation}\label{one estimate}
\lambda_1||s_{i'}||^2_K +\frac{\epsilon'}{2} <||D_{i'} s_{i'}||_{K}^2
\end{equation}
for sufficiently large $i'$.
Let $K'$ be the compact set containing
$K$ which gives the estimate in Lemma~\ref{estimate with error term}
for sufficiently small $t$.
Since ,
$s_{i'}$ is strongly $L^2$-convergent to $s_\infty$
on the pre-compact set $K' \setminus K$, we have
$||s_{i'}||_{K' \setminus K}<2\epsilon$
for sufficiently large $i'$.
The estimate in Lemma~\ref{estimate with error term} implies that
we have
$
\lambda_0^{1/2} ||s_{i'}||_{M\setminus K}
\leq || Ds_{i'}||_{M\setminus K} + 4\epsilon \lambda_0^{1/2}
$
for sufficiently large $i'$.
Taking square, and
using $\lambda_1<\lambda_0$ and
$|| D_{i'}s_{i'}||_{M\setminus K} \leq \lambda_1^{1/2}$,
we obtain
$$
\lambda_1||s_{i'}||^2_{M\setminus K}
\leq || D_{i'}s_{i'}||^2_{M\setminus K} 
+8\epsilon\lambda_0(1+2\epsilon)
$$
Adding with (\ref{one estimate}) we have
$
\lambda_1||s_{i'}||^2_{M}
< || D_{i'}s_{i'}||^2_{M} 
$
for sufficiently large $i'$, which contradicts our assumption. \
\end{proof}
For a single operator $D$ we have
\begin{proposition}\label{single operator}

\begin{enumerate}
\item Suppose $\lambda<\lambda' <\lambda_0$.
\begin{enumerate}
\item
If $s \in E_\lambda$, then
$Ds$ is $L^2$-bounded and  
$||D s||_M^2= \lambda ||s||^2_M$.
\item
$E_\lambda$ and $E_{\lambda'}$ are $L^2$-orthogonal to each other.
\end{enumerate}
\item Suppose $0 \leq \lambda_1 <\lambda_0$.
\begin{enumerate}
\item
$\dim \,\oplus_{\lambda\leq \lambda_1} E_\lambda(D) <\infty.$
\item
Let $R_{\lambda_1}$ be the set of  $L^2$-bounded sections $s$ 
satisfying $||s||_M=1$ and $||D s ||_M^2< \lambda_0$
such that $s$ is $L^2$-orthogonal to
$\oplus_{\lambda\leq \lambda_1} E_\lambda(D)$.
If $R_{\lambda_1}$ is not empty, then
the functional $I_{\lambda_1}:R_{\lambda_1} \to [0, \lambda_0)\,$, 
$I_{\lambda_1}(s)=||D s||_M^2$ attains its minimum value.
\item
Let $\lambda_2=||D s_0||_M^2$ be the minimum value of 
$I_{\lambda_1}$ at
a minimum $s_0$. Then we have $\lambda_1<\lambda_2 <\lambda_0$ and
$
s_0 \in E_{\lambda_2}
$
\end{enumerate}
\end{enumerate}
\end{proposition}
\begin{proof}
The first  statement for $\lambda<\lambda'<\lambda_0$ follows from 
the partial integration formulas
Lemma~\ref{partial integral 1} and Lemma~\ref{partial integral 2}.

Suppose $\lambda_1 <\lambda_0$.
If
$\oplus_{\lambda\leq \lambda_1} E_\lambda(D)$
is not finite dimensional, then
we have a sequence $e_i$ in the infinite space
with $||e_i||_M=1$ and mutually $L^2$-orthogonal each other.
Proposition~\ref{key proposition} implies that
we have a weakly convergent limit for a subsequence
with non zero limit, which is a contradiction.

Suppose $s_i$ is a sequence in $R_{\lambda_1}$ 
such that $I_{\lambda_1}(s_i)$
convergent to the infimum of $I_{\lambda_1}$.
Proposition~\ref{key proposition} implies that
we have a weakly convergent limit $s_\infty\neq 0$ for a subsequence
such that $s_0:=s_\infty/||s_\infty||$ is an element of
$R_{\lambda}$ which attains the infimum.
For any compactly-supported smooth section $s'$,
let $s''$ be the $L^2$-orthogonal projection of $s'$
to 
$\oplus_{\lambda\leq \lambda_1} E_\lambda(D)$
and put $s'''=s'-s''$. 
Since $s_0$ attains the minimum of 
$I_{\lambda_1}$, the derivative of 
$(s_0+ t s''')/||s_0+ts'''||_M$ at $0$ vanishes, 
and  we obtain
$$
\int_M (D s_0, D s''')= \lambda_2 \int_M(s_0, s''').
$$
Since $D s'''=Ds'-Ds''$ and $D^2 s'''=D^2 s' -D^2 s''$ is $L^2$-bounded,
Lemma~\ref{partial integral 2}
implies
$$
\int_M (s_0, D^2 s''')= \lambda_2 \int_M(s_0, s''').
$$
On the other hand we have
$$
\int_M (s_0, D^2 s'')=\int_M(s_0, s'')=0.
$$
These relations imply 
$$
\int_M (s_0, D^2 s')= \lambda_2 \int_M(s_0, s'),
$$
i.e., $D^2 s_0=\lambda_2 s_0$ weakly.
Lemma~\ref{partial integral 1} implies that 
$D s_0$ is $L^2$-bounded and 
$||D s_0||_M^2= \lambda_2 ||s_0||_M=\lambda_2$.
Since $s_0$ is $L^2$-orthogonal to
$\oplus_{\lambda \leq \lambda_1} E_\lambda$,
we have $\lambda_1 < \lambda_2$. The regularity theorem implies $s_0$ is smooth and hence $s_0\in E_{\lambda_2}$. \
\end{proof}

\begin{corollary} \label{single operator 2}
Suppose  $\lambda_1 <\lambda_0$.
Let $E$ be a ${\mathbb Z}/2$ graded subspace of $L^2(M,W)$ such that
such that
$D s$ is in $L^2(M,W)$ and
$
||D s||_{M}^2 \leq 
\lambda_1 ||s||_{M}^2
$
for any $s \in E$.
Then $E$ is finite dimensional and
$$
\dim \oplus_{\lambda\leq \lambda_1} E_\lambda(D) 
\geq \dim E.
$$
Moreover the above inequality holds for
each degree of ${\mathbb Z}/2$.
\end{corollary}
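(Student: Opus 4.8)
The plan is to compare $E$ with the finite-dimensional spectral space by means of orthogonal projection, exactly in the spirit of the min-max principle already established. Set $N:=\bigoplus_{\lambda\le\lambda_1}E_\lambda(D)$, which is finite dimensional by Proposition~\ref{single operator}~(2)(a). Since $D$ has degree one, $D^2$ is of degree zero and therefore commutes with the $\mathbb{Z}/2$-grading operator on $W$; hence each eigenspace $E_\lambda$, and so $N$, is a $\mathbb{Z}/2$-graded subspace of $L^2(M,W)$, and the $L^2$-orthogonal projection $P\colon L^2(M,W)\to N$ preserves degree. (Here I use that $E_\lambda$ consists of smooth sections by its very definition, so no regularity issue arises in forming $N$.)

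The key step is to show that $P|_E\colon E\to N$ is injective. Suppose $s\in E$ with $Ps=0$, i.e.\ $s$ is $L^2$-orthogonal to $N$, and suppose for contradiction that $s\neq 0$. Since $||D s||_M^2\le\lambda_1||s||_M^2<\lambda_0||s||_M^2$, the normalized section $s/||s||_M$ belongs to the set $R_{\lambda_1}$ appearing in Proposition~\ref{single operator}~(2)(b), and it satisfies $I_{\lambda_1}(s/||s||_M)=||D s||_M^2/||s||_M^2\le\lambda_1$. But Proposition~\ref{single operator}~(2)(b),(c) asserts that $I_{\lambda_1}$ attains a minimum value $\lambda_2$ with $\lambda_2>\lambda_1$, which is a contradiction; if instead $R_{\lambda_1}=\emptyset$, the mere existence of such a nonzero $s$ is already a contradiction. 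Hence $s=0$ and $P|_E$ is injective.

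Injectivity of $P|_E$ gives at once $\dim E\le\dim N<\infty$, which is both the asserted finite-dimensionality of $E$ and the asserted inequality. For the refinement by degree, write $E=E^0\oplus E^1$ as in the hypothesis; since $P$ is degree-preserving, $P|_{E^i}\colon E^i\to N^i$ is injective for $i=0,1$, whence $\dim E^i\le\dim N^i=\dim\bigl(\bigoplus_{\lambda\le\lambda_1}E_\lambda(D)\bigr)^i$.

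There is essentially no new analytic input required: everything is already contained in Proposition~\ref{single operator}, which in turn rests on the compactness Proposition~\ref{key proposition}. The only points needing care are (i) that $s/||s||_M$ genuinely lies in $R_{\lambda_1}$, which is exactly where the strict inequality $\lambda_1<\lambda_0$ is used, and (ii) that the grading operator commutes with $D^2$, so that $N$ and $P$ respect the $\mathbb{Z}/2$-decomposition. The mild obstacle is to remember to treat the degenerate case $R_{\lambda_1}=\emptyset$, but as indicated this is subsumed in the same argument.
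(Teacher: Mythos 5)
Your proof is correct and is essentially the argument the paper intends: the corollary is stated without proof as an immediate consequence of the min-max statement Proposition~\ref{single operator}~(2), namely that any unit vector of $E$ orthogonal to $\oplus_{\lambda\le\lambda_1}E_\lambda(D)$ would lie in $R_{\lambda_1}$ and force the minimum $\lambda_2\le\lambda_1$, contradicting $\lambda_2>\lambda_1$. Your packaging via injectivity of the degree-preserving orthogonal projection onto $\oplus_{\lambda\le\lambda_1}E_\lambda(D)$ is just a clean way of phrasing the same comparison, including the refinement in each $\mathbb{Z}/2$-degree.
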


\subsection{Deformation invariance of index}   
\label{proof of index theory}

For a family $\{D_t\}$ we have: 
\begin{proposition}\label{min-max}
Suppose $\lambda_1< \lambda_0$.
Let $\{t_i\}$
be a sequence convergent to $0$
and $E^{(i)}$ 
be a ${\mathbb Z}/2$ graded subspace of $L^2(M,W)$ such that
$D_{t_i} s$ is in $L^2(M,W)$ and
$
||D_{t_i} s||_{M}^2 \leq 
\lambda_1 ||s||_{M}^2
$
for any $i$ and $s \in E_i$.
Then each $E^{(i)}$ is finite dimensional and
$$
\dim \oplus_{\lambda\leq \lambda_1} E_\lambda(D_0) 
\geq \limsup_{i\to \infty}\dim E^{(i)}.
$$
Moreover the above inequality holds for
each degree of ${\mathbb Z}/2$.
\end{proposition}

\begin{proof}
Suppose $\dim \oplus_{\lambda\leq \lambda_1} E_\lambda(D_0) 
< \dim E^{(i')}$ for a subsequence $\{i'\}$.
Let $s_{i'}$ be an element of $E^{(i')}$ with
$||s_{i'}||_M=1$ which is $L^2$-orthogonal to
$\oplus_{\lambda\leq \lambda_1} E_\lambda(D_0)$.
Let $s_\infty$ be the $L^2$-bounded section
given by Proposition~\ref{key proposition}.
Then the weak limit $s_\infty$ is also $L^2$ orthogonal to
$\oplus_{\lambda\leq \lambda_1} E_\lambda(D_0)$,
which contradicts Proposition~\ref{single operator}. \
\end{proof}

\begin{remark}
\label{remark for min-max}
{\rm 
In the above proof the choice of $s_{i'}$ can be 
generalized as follows:
Fix an $L^2$-orthonormal basis $e_1,e_2,\ldots,e_N$  
of the finite dimensional space
$
\oplus_{\lambda \leq\lambda_1} E_\lambda(D_0).
$
Fix any sequence $\{e_k^{(i')}\}_{i'}$ for each $1\leq k\leq N$
which is strongly $L^2$-convergent to $e_k$ as $i' \to \infty$.
Let $s_{i'}$ be
an element of $E^{(i')}$ with
$||s_{i'}||_M=1$ which is $L^2$-orthogonal to
all $e_k^{(i')}$ $(1 \leq k \leq N)$.
Then the rest of the proof remains valid.
}
\end{remark}

\begin{corollary} \label{limsup}
For each degree of ${\mathbb Z}/2$ we have the inequality
$$
\dim \oplus_{\lambda\leq \lambda_1} E_\lambda(D_0) 
\geq
\limsup_{t \to 0} \dim \oplus_{\lambda\leq \lambda_1} E_\lambda(D_t) 
$$
\end{corollary}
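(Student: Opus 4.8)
The plan is to obtain the corollary as an immediate consequence of Proposition~\ref{min-max}. Fix a degree $\delta\in\Z/2$ and abbreviate $n_\delta(t):=\dim\big(\oplus_{\lambda\le\lambda_1}E_\lambda(D_t)\big)^{\delta}$, which is finite for every $t$ by Proposition~\ref{single operator}(2)(a). Since these are nonnegative integers (or $\infty$), there is a sequence $t_i\to 0$ with $n_\delta(t_i)\to\limsup_{t\to 0}n_\delta(t)$. I would then apply Proposition~\ref{min-max} to this sequence with $E^{(i)}:=\oplus_{\lambda\le\lambda_1}E_\lambda(D_{t_i})$; note that $E^{(i)}$ is a finite-dimensional $\Z/2$-graded subspace of $L^2(M,W)$, since $D_{t_i}^2$ is of degree zero so each $E_\lambda(D_{t_i})$ is $\Z/2$-graded.

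The one hypothesis of Proposition~\ref{min-max} that needs checking is the uniform estimate $\|D_{t_i}s\|_M^2\le\lambda_1\|s\|_M^2$ for all $s\in E^{(i)}$. To see it, write $s=\sum_{\lambda\le\lambda_1}s_\lambda$ with $s_\lambda\in E_\lambda(D_{t_i})$ (a finite sum). By Assumption~\ref{global continuity} each $D_{t_i}$ satisfies Assumption~\ref{assumption for operator} with the common $\lambda_0$, so Proposition~\ref{single operator}(1) applies to $D_{t_i}$: each $D_{t_i}s_\lambda$ is $L^2$-bounded with $\|D_{t_i}s_\lambda\|_M^2=\lambda\|s_\lambda\|_M^2$, and the $s_\lambda$ are mutually $L^2$-orthogonal. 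For $\lambda\ne\mu$, Lemma~\ref{partial integral 2} gives $\int_M(D_{t_i}s_\lambda,D_{t_i}s_\mu)=\int_M(s_\lambda,D_{t_i}^2 s_\mu)=\mu\int_M(s_\lambda,s_\mu)=0$, so the sections $D_{t_i}s_\lambda$ are mutually $L^2$-orthogonal as well. Hence $\|D_{t_i}s\|_M^2=\sum_\lambda\lambda\|s_\lambda\|_M^2\le\lambda_1\sum_\lambda\|s_\lambda\|_M^2=\lambda_1\|s\|_M^2$.

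With this in hand, Proposition~\ref{min-max} gives, in the degree $\delta$,
\[
\dim\big(\oplus_{\lambda\le\lambda_1}E_\lambda(D_0)\big)^{\delta}\ \ge\ \limsup_{i\to\infty}\dim\big(E^{(i)}\big)^{\delta}\ =\ \limsup_{i\to\infty}n_\delta(t_i)\ =\ \limsup_{t\to 0}n_\delta(t),
\]
which is exactly the claimed inequality (and in particular shows that the right-hand side is finite). As $\delta$ was arbitrary, the corollary follows. I do not expect a genuine obstacle here: all of the analytic difficulty---extracting weakly convergent subsequences of approximate eigensections and controlling the vanishing of their limits away from the compact core---has already been absorbed into Propositions~\ref{key proposition} and~\ref{min-max}; the only point requiring a little care is the orthogonality of the $D_{t_i}$-images of distinct eigenspaces, which is handled above via the partial integration formula.
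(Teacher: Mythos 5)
Your argument is correct and is essentially the proof the paper intends: the corollary is stated as an immediate consequence of Proposition~\ref{min-max}, applied exactly as you do, to a sequence $t_i\to 0$ realizing the limsup with $E^{(i)}=\oplus_{\lambda\le\lambda_1}E_\lambda(D_{t_i})$. Your verification of the hypothesis $\|D_{t_i}s\|_M^2\le\lambda_1\|s\|_M^2$ via the spectral decomposition, Proposition~\ref{single operator}(1) and the partial integration formula Lemma~\ref{partial integral 2} is the standard check the paper leaves implicit, and it is carried out correctly.
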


\begin{proposition}\label{liminf}
Suppose $\lambda_1< \lambda_0$.
For each degree of ${\mathbb Z}/2$ we have the inequality
$$
\dim \oplus_{\lambda <\lambda_1} E_\lambda(D_0) 
\leq \liminf_{t\to 0}
\dim \oplus_{\lambda <\lambda_1} E_\lambda(D_t) 
$$
\end{proposition}
\begin{proof}
Let $\epsilon_0>0$ be a sufficiently small number, which
we fix later.
Let $e_1,e_2,\ldots,e_N$ be an $L^2$-orthonormal basis
of the finite dimensional space
$
E:=\oplus_{\lambda <\lambda_1} E_\lambda(D_0)
$
consisting of eigenvectors of $D_0$
with eigenvalues $\mu_1,\mu_2,\ldots,\mu_N$ respectively.
For a sufficiently large $a$ and sufficiently
small $\epsilon>0$, the truncated sections
$e_i'=\rho_{a,\epsilon}e_i$ satisfy
$$
\left|\delta_{ij}- \int_M(e_i', e_j')  -\right|<\epsilon_0,
\quad
\left|\mu_i\mu_j \delta_{ij}- \int_M(D_0 e_i', D_0 e_j')  \right|
 < \epsilon_0
$$
for every $1 \leq i,j \leq N$ as in the proof of
Lemma~\ref{partial integral 1} or Lemma~\ref{partial integral 2}.
Here $\delta_{ij}$ is Kronecker's delta.
Let $E'$ be the vector space spanned by $e_i'$.
Since the support of all the $e_i'$ are contained
in the compact support of $\rho_{a,\epsilon}$,
Assumption~\ref{global continuity} implies that
$$
\left|\mu_i\mu_j \delta_{ij}- \int_M(D_t e_i', D_t e_j')   \right|
 < \epsilon_0.
$$
for every $t$ sufficiently closed to $0$.
Let $E'$ be the vector space spanned by $e_i'$.
Then if $\epsilon_0$ is sufficiently small, each element
$s'$ of $E'$ satisfies
$||D_t s'||^2_M \leq  \lambda_1 ||s'||_M^2$.
Corollary~\ref{single operator 2} implies
$
\dim E'\leq  \dim \oplus_{\lambda <\lambda_1} E_\lambda(D_t)
$.
It is easy to check that the above inequality holds for
each degree of ${\mathbb Z}/2$. \
\end{proof}

\begin{proof}[Proof of Theorem~\ref{deformation invariance}]
From Proposition~\ref{single operator}
there is $0< \lambda_1 <\lambda_0$ such that
$E_{\lambda_1}=0$.
Then Corollary~\ref{limsup} and Proposition~\ref{liminf}
imply
$$
\dim \oplus_{\lambda\leq \lambda_1} E_\lambda(D_0) 
=
\lim_{t \to 0} \dim \oplus_{\lambda\leq \lambda_1} E_\lambda(D_t)
$$
and the equality holds for each degree,
from which the claim follows. \
\end{proof}

\subsection{Generalized deformation invariance and gluing formula}\label{proof of Theorem 3.9}
Let $(M, W, \sigma, D,V)$ be 
as in Section~\ref{formulation of index}. 
We also assume that there exists a family of data 
$\{M_i, W_i, V_i, K_i, \iota_i, \tilde\iota_i, \tilde D_i\}_{i\in\N}$ 
together satisfying Assumption~\ref{assumption gluing}.  
\begin{proof}[Proof of Theorem~\ref{generalized deformation invariance}]
The most of the arguments in Sections~\ref{min max} and
\ref{proof of index theory} go through.
The statement and proof of 
Proposition~\ref{key proposition}
is straightforwardly generalized
with replacement of $D_{t_i}$ by $D_i$.
The statement and proof of 
Proposition~\ref{liminf} 
is also straightforwardly generalized.
To generalize Proposition~\ref{min-max}
we need the construction in Remark~\ref{remark for min-max}.
As for the statement we replace $D_{t_i}$ with $D_i$, and
let $E^{(i)}$ 
be a ${\mathbb Z}/2$ graded subspace of $L^2(M_i,W_i)$.
As for the proof take $e_k^{(i')}$ with support contained
in $int(K_i)$. Let $s_{i'}$ be an element of $E^{(i')}$ with
$||s_{i'}||_{M_i}=1$  which is $L^2$-orthogonal
to all $\tilde\iota_i e_k^{(i')} $ $(1\leq k \leq N)$.
Then the rest of the proof remain valid.
Then the argument of Section~\ref{proof of index theory} can be straightforwardly
generalized to show Theorem~\ref{generalized deformation invariance}. \
\end{proof}

\section{Proof of Lemma~\ref{good open covering} }\label{proof of good open covering}

\begin{proof}[Proof of Lemma~\ref{good open covering}]
Let $H_1,H_2,\ldots,H_m$ be the elements of $A$.
Without loss of generality we assume that
$H_i \supset H_j$ implies  $i \leq j$.
We construct
a family of open sets $V_i^{(j)}$ $(1\leq i \leq j \leq m)$
by induction on $1 \leq i\leq m$.
For the construction with $i=i_0$ we assume
the following properties. 
\begin{enumerate}
\item[(B1)]\label{B1}
$V_i^{(j)}$ contains the closure of $V_i^{(j+1)}$ for
all $1 \leq i<i_0$ and $i \leq j < m$.
\item[(B2)]\label{B2}
If $x \in V_i^{(i)}$ for $1 \leq i<i_0$, 
then we have $G_x \subset H_i$. 
\item[(B3)] \label{B3}
For $x\in M$ with $G_x=H_{i}$ for some $1 \leq i < i_0$,
we have 
$$
x \in \bigcup_{\{j\,|\, H_{j} \supset H_{i}\}}  V_{j}^{(m)}.
$$ 
\item[(B4)]
 If the intersection $V_i^{(j)} \cap V_{j}^{(j)}$ is not empty
for $1 \leq i<j < i_0$, then
we have 
$H_i \supset H_{j}$.
\end{enumerate}
If $i_0=1$, then the above is the empty assumption.
For $1 \leq i_0 \leq m$,
using the above properties as the assumption of induction, 
we construct $V_{i_0}^{(j)}$ $(i_0 \leq j \leq m)$
which satisfy 
the above properties with replacement
of $i_0$ by $i_0+1$.

Suppose $1\leq i_0 <m$ and assume (B1),(B2),(B3) and (B4).
Then (B3) implies that the closed set
$$
K_{i_0}:=
M^{H_{i_0}}\setminus \bigcup_{\{ k \,|\, H_k 
\supsetneqq H_{i_0}\}} V_k^{(m)}
$$
is contained in $\{x \in M\,|\, G_x = H_{i_0}\}$, where $M^{H_{i_0}}$
is the fixed point set
$M^{H_{i_0}}=\{x \in M\,|\, G_x \supset H_{i_0}\}.$
Hence (C2) implies that $K_{i_0}$ does not intersect
with the open set
$$
\bigcup_{\{j<i_0 \,|\, H_{i_0} \not\subset H_{j} \}}  V_j^{(i_0-1)}.
$$
Let $L_{i_0}$ be the closure of
$$
\bigcup_{\{j<i_0 \,|\, H_{i_0} \not\subset H_{j} \}}  V_j^{(i_0)}.
$$
Then (B1) implies $K_{i_0}\cap L_{i_0}=\emptyset$. 
Since $K_{i_0}$ is a subset of $\{x \in M\,|\, G_x=H_{i_0}\}$,
there is an open neighborhood $V$
of the closed set $K_{i_0}$ in the complement of $L_{i_0}$ 
such that for each $x\in V$ we have $G_x \subset H_{i_0}$.
Now we take a decreasing sequence of open neighborhoods 
$V_{i_0}^{(j)}$ $(i_0\leq j \leq m )$ of $K_{i_0}$
so that $V_{i_0}^{(i_0)}=V$, $V_{i_0}^{(m)} \supset K_{i_0}$
and 
$V_{i_0}^{(j)}$ contains the closure of
$V_{i_0}^{(j+1)}$ for $i_0 \leq j< m$.
We can choose the decreasing sequence so that
the open sets $V_{i_0}^{(j)}$ $(i_0\leq j \leq m )$ 
are $G$-invariant because
the quotient space $M/G$ is a regular space.

Then it is straightforward to check (B1),(B2),(B3) and (B4)
are satisfied with $i_0$ replaced by $i_0+1$.

The family of open sets $\{V_{H_i}:=V_i^{(m)}\}_{1 \leq i \leq m}$
is an open covering of $M$ and satisfies the required properties.

Now if a family of open sets $\{V_x\}_{x\in M}$ satisfying $gV_x=V_{gx}$ 
for all $g\in G$ is given, then since  $K_{i_0}$ is contained in 
$\displaystyle\bigcup_{\{x \ | \ G_x=H_{i_0}\}}V_x$ 
we can take a $G$-invariant open neighborhood $V=V_{i_0}^{(i_0)}$ 
of $K_{i_0}$ so that $V\subset \bigcup_{G_x=H_{i_0}}V_x$, 
and hence, we have the required open covering $\{V_H\}_{H\in A}$. \
\end{proof}

\noindent{\bf Acknowledgements.}
The authors would like to thank
Yukio Kametani, Shinichiroh Matsuo, Nobuhiro Nakamura and Hirohumi Sasahira.
Throughout the seminar with them
the authors could simplify the proof of the key proposition (Proposition~\ref{KeyProp-local}). 
The second author is grateful to the hospitality of
MIT and University of Minnesota. 

The first author is partly supported by Grant-in-Aid for Young Scientists (Start-up) 21840045 and Grant-in-Aid for Young Scientists (B) 23740059. The second author is partly supported by Grant-in-Aid for Scientific Research (A) 19204003 and Grant-in-Aid for Scientific Research (B) 19340015. The third author is partly supported by Grant-in-Aid for Young Scientists (B) 20740029, Grant-in-Aid for Young Scientists (B) 22740046, and Fujyukai Foundation.

\bibliographystyle{amsplain}
\bibliography{reference}
\end{document}